\documentclass[12pt,reqno]{amsart}

\usepackage{amsmath}
\usepackage{amsfonts}
\usepackage{amscd}
\usepackage{amssymb}
\usepackage{amsthm}
\usepackage{mathdots}
\usepackage{mathtools}
\usepackage{multicol}
\usepackage{inputenc}
\usepackage{caption}
\usepackage{subcaption}
\usepackage{float}
\usepackage{commath}

\numberwithin{equation}{section}

\usepackage{MnSymbol}
\usepackage[linktocpage=true,colorlinks,backref, citecolor=green!50!black]{hyperref}
\usepackage{url}
\usepackage{mathrsfs}

\usepackage{tikz,tikz-cd, color}
\usepackage{adjustbox}

\usepackage{appendix}
\usetikzlibrary{matrix}
\usetikzlibrary{decorations.pathmorphing}

\tikzset{
  symbol/.style={
    draw=none,
    every to/.append style={
      edge node={node [sloped, allow upside down, auto=false]{$#1$}}}
  }
}

\usepackage{stmaryrd}
\usepackage{enumerate}
\usepackage{xcolor}
\usepackage[all]{xy}
\usepackage{fullpage}

\usepackage{pgfplots}
\pgfplotsset{compat=1.18}
\usepgfplotslibrary{fillbetween}

\newtheorem{thm}{Theorem}[section]
\newtheorem{lem}[thm]{Lemma}
\newtheorem{prop}[thm]{Proposition}
\newtheorem{cor}[thm]{Corollary}

\theoremstyle{definition}
\newtheorem{defn}[thm]{Definition}
\newtheorem{exmp}[thm]{Example}

\newtheorem{conv}[thm]{Convention}

\theoremstyle{remark}
\newtheorem{rem}[thm]{Remark}

\newcommand{\oo}{\mathcal{O}}
\newcommand{\bP}{\mathbb{P}}
\newcommand{\bC}{\mathbb{C}}
\newcommand{\bZ}{\mathbb{Z}}
\newcommand{\bR}{\mathbb{R}}

\newcommand{\bU}{\mathbb{U}}
\newcommand{\cT}{\mathcal{T}}
\newcommand{\cF}{\mathcal{F}}

\newcommand{\cH}{\mathcal{H}}
\newcommand{\Hom}{{\rm Hom}}
\newcommand{\lHom}{\mathcal{H}om}
\newcommand{\Ext}{{\rm Ext}}
\newcommand{\lExt}{\mathcal{E}xt}
\newcommand{\ext}{{\rm ext}}
\newcommand{\Quot}{{\rm Quot}}

\newcommand{\ch}{{\rm ch}}
\newcommand{\im}{{\rm im\,}}

\newcommand{\stable}{{\rm s}}
\DeclareMathOperator{\length}{length}

\DeclareMathOperator{\supp}{supp}
\DeclareMathOperator{\sing}{sing}

\newcommand{\Geo}{\mathrm{Geo}}

\newcommand{\DLP}{\mathcal{C}_{\mathrm{DLP}}}

\DeclareMathOperator{\chbar}{\overline{ch}}
\DeclareMathOperator{\Coh}{Coh}
\DeclareMathOperator{\Db}{D^b}
\DeclareMathOperator{\GL}{GL}

\DeclareMathOperator{\gr}{gr}

\title{Extensions and Segre stratifications\\ over algebraic surfaces}
\author[]{Thomas Goller}
\address{Temple University, PA, USA}
\email{thomas.goller@temple.edu}
\author[]{Yinbang Lin}
\address{Department of Mathematics, University of Houston, Houston, TX, USA} 
\email{yinbang.lin@gmail.com}

\keywords{Stable sheaves, algebraic surfaces, Segre stratifications, Lange's conjecture, Brill--Noether problems}
\subjclass[2020]{Primary 14D20, 14F06; Secondary 14J60, 14F08}

\begin{document}
\begin{abstract}
    We study two closely related topics over algebraic surfaces: stability of extensions of stable sheaves by stable sheaves, and maximal subsheaves of a given stable sheaf. For the former, we provide a construction of complete families of sheaves via extensions and prove the stability for some cases. The construction enables us to prove certain cases of Weak Brill--Noether over rational surfaces. The second topic leads to a refinement of the Segre stratification of the moduli of sheaves. We obtain the expected dimension of certain refined Segre strata over rational surfaces and K3 surfaces.
    Over the projective plane, we prove our main result that all refined Segre strata corresponding to line bundles are irreducible of the expected dimension and are nested under taking closures. As a consequence, we obtain more cases of the stability of extensions. Our results suggest that refined Segre strata behave much better than Brill--Noether strata. Our study of the refined Segre strata relies crucially on Bridgeland stability conditions and the work of Li and Zhao.
\end{abstract}
\maketitle

\section{Introduction}

In the study of coherent sheaves on a smooth projective variety $X$, a key construction is the short exact sequence or extension
\begin{equation}\label{eq:ext}
    0 \to F \to E \to G \to 0,
\end{equation}
where $E$, $F$, and $G$ are coherent sheaves on $X$. Two main perspectives on \eqref{eq:ext} are as follows:
\begin{itemize}
\item Fixing $F$ and $G$, what sheaves $E$ arise as extensions?
\item Fixing $E$, what subsheaves $F$ and quotient sheaves $G$ arise?
\end{itemize}
These equivalent perspectives are directly related to many important problems.
Focusing mainly on the case when $X$ is a surface, we explore various aspects of such short exact sequences, obtaining results related to Brill--Noether theory, Lange's conjecture, and Segre stratifications, as well as to the study of Quot schemes and Bridgeland wall-crossing.
We are motivated indirectly by the Verlinde/Segre correspondence \cite{GotMel22,Marian25} and Quot scheme approaches to studying Le Potier's strange duality conjecture \cite{MarOpr07,goller2019rankone,GolLin25}.

We work over the field $\mathbb{C}$ of complex numbers. Table \ref{tab:notation} summarizes the notational conventions used throughout the paper.

\begin{table}[h]
    \centering
    \begin{tabular}{c  l}
        $X$ & smooth projective surface \\
        $H$ & ample divisor on $X$ \\
        $E$ & coherent sheaf on $X$ \\
       $e$ & class $[E]$ of $E$ in the Grothendieck group $K(X)$ \\
       $M(e)$ & moduli space of semistable sheaves $E$ of class $e$ \\
       $E \in M(e)$ & $E$ is a semistable sheaf of class $e$ \\
       $r$, $c_i$, $\ch_i$ & rank, $i$th Chern class, $i$th Chern character\\
        $\mu$ & slope $(c_1\cdot H)/r$\\
        $\Delta$ & 
        $(c_1^2-2r\ch_2)/(2r^2)$\\
        $\chi$ & holomorphic Euler characteristic\\
        $\hom$, $\ext^i$ & dimension of the corresponding $\Hom$ or $\Ext$ group\\
        $I_{Z_m}$ & ideal sheaf of a zero-dimensional subscheme of length $m$

    \end{tabular}
     \caption{Notation}
    \label{tab:notation}
\end{table}

Let $F$ and $G$ be stable sheaves and $E$ be a general non-split extension \eqref{eq:ext}. A natural question is whether $E$ is stable. Over curves, this question
is attributed to Lange \cite{Lange83Klass} and
has been resolved \cite{RussoTei99,ballico2000extensions}. 
We study this question over surfaces.
In the case when the slopes of $F$ and $G$ are as close as possible, we obtain an affirmative answer to the question (Proposition \ref{prop:lange-small-slope-diff-coprime}). A next case that is especially tractable is when the extension is expected to produce a general stable bundle for dimension reasons. To study such extensions, we introduce a construction of a complete family (Lemma \ref{lem:complete}). This construction enables us to prove over rational surfaces certain cases of Weak Brill--Noether (Proposition \ref{prop:bn}) in the sense of \cite{CosHui18weakBN} and stability of extensions (Proposition \ref{prop:lange-hirzebruch}). 
Our methods also yield a shorter proof of certain cases of Lange's conjecture
(Corollary \ref{cor:lange-cur}).

We shift our perspective from extensions to subsheaves. 
In the study of Lange's conjecture over curves \cite{Lange83Klass,RussoTei99}, the authors studied the stratification of the moduli space via {\em Segre invariant} \eqref{eq:segre}.
We extend the study to surfaces.
As the resulting strata are not in general irreducible, we refine the stratification by further considering the discriminant
(Definition \ref{defn:segre-refined}).
We call a subsheaf with the corresponding Segre invariant and the minimum discriminant a {\em Segre subsheaf} (Definition \ref{defn:segre-sub}).
A Segre subsheaf is maximal among subsheaves of $E$ of a fixed rank.

Turning our attention to $\bP^2$, we particularly focus on the refined stratum of sheaves admitting $\oo(k)$ as a Segre subsheaf, which we call the $\oo(k)$-{\em Segre stratum} (Definition \ref{defn:F-Segre-stratum}).
In this case, the weak Brill--Noether result of G\"ottsche and Hirschowitz \cite{GotHir98} is enough to deduce the {\em general} Segre invariant of rank 1, the one corresponding to the open stratum. We complete the picture with the following main result describing all Segre strata in which the Segre subsheaf is locally free of rank 1. For a class $e\in K(X)$, let $k_e$ be the largest integer $k < \mu(e)$ such that $\chi(e(-k))>0$.

\begin{thm}\label{thm:O(k)-Segre-strata} Let $X=\bP^2$ and let $e$ be a class of rank $\ge 1$ such that $M(e)$ is of positive dimension. 
Let $k$ be an integer satisfying $k_e \le k < \mu(e)$. Then the $\oo(k)$-Segre stratum in $M(e)$ is non-empty and irreducible,
and its closure
contains the $\oo(j)$-Segre stratum for all $k\le j<\mu(e)$.
If $k=k_e$, the stratum is open and dense in $M(e)$, while if $k > k_e$, the stratum has codimension $1-\chi(e(-k))$ in $M(e)$.
Except in the case where $k=k_e$ and
$M(e-[\oo(k_e)])$ is empty, the general sheaf in the $\oo(k)$-Segre stratum is stable and arises as a general extension
\begin{align}\label{eq:ext-small-strata}
    0 \to \oo(k) \to E \to G \to 0,
\end{align}
where $G$ is a general stable sheaf.
\end{thm}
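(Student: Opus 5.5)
We will study the $\oo(k)$-Segre stratum by parametrizing it with extensions \eqref{eq:ext-small-strata} and computing dimensions. Since $\mu(\oo(k))=k<\mu(e)$, $\oo(k)$ is a candidate Segre subsheaf. We first make the Segre condition concrete: for $k\ge k_e$, a sheaf $E\in M(e)$ lies in the $\oo(k)$-Segre stratum exactly when $\hom(\oo(k),E)>0$ and $\hom(\oo(j),E)=0$ for all $j$ with $k<j<\mu(e)$. Here the maximal rank-one subsheaf $I_Z(j)$ with $Z=\emptyset$ has minimal discriminant, so having $\oo(k)$ as a subsheaf beats any $I_Z(k)$ with $Z\neq\emptyset$. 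Since $\hom(\oo(k),E)=h^0(E(-k))$ and $h^2(E(-k))=\hom(E,\oo(k-3))=0$ by stability (because $k-3<\mu(e)$), we get $h^0(E(-k))\ge \chi(e(-k))$. When $k=k_e$ this forces a section for every $E$. Combined with the Göttsche--Hirschowitz weak Brill--Noether result, which gives $h^0(E(-j))=0$ for general $E$ when $j>k_e$ (because $\chi(e(-j))\le 0$), this shows that the $\oo(k_e)$-stratum is open and dense. It is also irreducible, since $M(e)$ is irreducible on $\bP^2$.

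For $k>k_e$, set $g=e-[\oo(k)]$. We parametrize by pairs $(G,\xi)$ with $G\in M(g)$ general stable and $\xi\in\bP\Ext^1(G,\oo(k))$. A general $G$ has $\hom(G,\oo(k))=0$ and $\ext^2(G,\oo(k))=\hom(\oo(k-3),G)^*=0$ by weak Brill--Noether for $G$, after checking the slope and $\chi$ inequalities. So $\ext^1(G,\oo(k))=-\chi(g,\oo(k))$, and the family is irreducible. Its dimension is
\[
\dim M(g)-1-\chi(g,\oo(k)) = \dim M(e)-\bigl(1-\chi(e(-k))\bigr).
\]
This follows from $\dim M(v)=1-\chi(v,v)$ and bilinearity: expanding $\chi(e,e)=\chi(g,g)+\chi(g,\oo(k))+\chi(\oo(k),g)+1$, one obtains the stated codimension once we know the generic fiber over the stratum is $\bP H^0(E(-k))$ of dimension $0$. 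That in turn amounts to $h^0(E(-k))=1$ for general $E$, which follows from $\hom(\oo(k),G)=0$ for general $G$. The remaining steps are the following.

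(i) The general extension $E$ is stable. We prove this via Bridgeland wall-crossing, in the spirit of Li--Zhao. The object $E$ is destabilized only at the wall $W$ where $\oo(k)$ and $G$ have equal phase. Just above $W$, general nonsplit extensions of stable objects of equal phase are stable. We then check that there are no further walls between $W$ and the Gieseker chamber for this class, using the Li--Zhao description of the walls and the bound $k>k_e$.

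(ii) The map is dominant onto the stratum, so irreducibility holds. Any $E$ in the stratum has $\oo(k)\to E$ with torsion-free quotient, because the section is saturated by maximality. The quotient $G$ is Bridgeland semistable near $W$, and it deforms to a general stable $G$. Once the dimension count is established, lower semicontinuity of the components rules out extra components.

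(iii) Closure nesting. The condition $h^0(E(-j))>0$ is closed, so the closure of the $\oo(k)$-stratum lies in the locus where $h^0(E(-k))>0$. For the reverse containment (that the $\oo(j)$-stratum lies in the closure of the $\oo(k)$-stratum for $j\ge k$), we degenerate: given $\oo(j)\subset E$, compose with a section $\oo(k)\to\oo(j)$ to get a nonsaturated map. We then deform $E$ within the $\oo(k)$-locus using a codimension comparison: the $\oo(k)$-locus is cut out by the condition $h^0(E(-k))>0$, which has expected codimension at most $1-\chi(e(-k))$, and that codimension is at most the codimension $1-\chi(e(-j))$ of the $\oo(j)$-stratum.

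The main obstacle is step (i), together with the no-walls/dominance claim in (ii). Both require controlling every Bridgeland wall for class $e$ between the $\oo(k)$-wall and the large-volume limit. I would first try to use Li--Zhao's result that the first wall for general sheaves, and the walls for Brill--Noether loci, are governed by exceptional bundles. From that I would aim to show that no exceptional $\oo(a)$ or higher-rank exceptional object gives a wall outside $W$ when $k_e\le k<\mu(e)$.
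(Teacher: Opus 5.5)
There is a genuine gap, located in the step you flagged, and the target you set for it is false. In (i) you aim to show that there are no walls for $e$ between $W=L_{\oo(k)e}$ and the Gieseker chamber, i.e.\ that no exceptional object other than $\oo(k)$ gives a wall. But suppose $k+1<\mu(e)$. Then $\oo(k+1)$ lies strictly above $L_{\oo(k)e}$, because the arc of $\Delta_0$ between $\oo(k)$ and the vertical line through $e$ lies above that line. Hence $\oo(k+1)$ is above $L_e^{\mathrm{last}}$, and Lemma~\ref{lem:exc-gives-wall} makes $L_{\oo(k+1)e}$ an actual wall with $Z_{\oo(k+1),+}\neq\emptyset$. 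The same holds for every exceptional class of rank $\le r(e)$ above $W$ and left of $e$, and non-exceptional classes $f$ of any rank can give walls too.

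So the walls cannot be avoided. What must be proved is that the general point of $Z_{\oo(k),+}$, of codimension $1-\chi(e(-k))$, is not absorbed into the exceptional locus at any higher wall. This is the heart of the paper's argument:
\begin{itemize}
\item $Z_{f,+}$ and $Z_{f,-}$ have codimension at least $-\chi(f,e-f)$ and $-\chi(e-f,f)$ respectively (Lemma~\ref{lem:loci-codim-estimates}).
\item For $r(f)\neq r(e)$, the bounds $\chi(f,e-f)<\chi(e(-k))-1$ of Lemmas~\ref{lem:chi-bound} and \ref{lem:chi-bound-large-rank} make these loci smaller than $Z_{\oo(k),+}$.
\item For $r(f)=r(e)$, $Z_{f,+}$ itself can be too large. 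One bounds only its part where $h^0(E(-k))>0$, which forces $h^0(F(-k))>0$ or $h^0(G(-k))>0$. This uses Lemma~\ref{lem:codimension-estimates-r=r'}, induction on $\ch_1(e)$, and the rank-zero case (Proposition~\ref{prop:O(k)-segre-strata-rk0}).
\end{itemize}
Without this comparison, neither stability of the general extension nor the codimension claim is established.

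Your irreducibility argument in (ii) also fails. For an arbitrary $E$ in the stratum, there is no reason for $E/\oo(k)$ to be Bridgeland semistable near $W$, since $E$ may already be destabilized at a higher wall. Moreover, dimension counts only bound other pieces from above, so they cannot exclude extra lower-dimensional components. You need a \emph{lower} bound on every component, which the paper gets from Proposition~\ref{prop:degeneracy-locus-codim}. Every component of $\Phi_k=\{E : \hom(\oo(k),\gr^{\mathrm{JH}}(E))\neq 0\}$ has codimension $\le 1-\chi(e(-k))$ and contains a stable sheaf. Every piece of $\Phi_k$ other than the main component of the $\oo(k)$-stratum has strictly larger codimension. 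Together these give irreducibility and nestedness at once; your (iii) is a partial version of this.

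However, your description of the stratum is wrong for non-locally-free sheaves. A stable $E$ lies in the $\oo(k)$-stratum iff $\hom(\oo(k),E)\neq 0$ and $\hom(\oo(j),E^{**})=0$ for all $j>k$, not $\hom(\oo(j),E)=0$. So $\Phi_k$ also contains sheaves whose Segre subsheaf is $I_{Z_m}(j)$ with $j>k$, and their dimension must be controlled. This is the paper's Step~2 (Lemma~\ref{lem:rk1-sub-tf}, Proposition~\ref{prop:rk-1-subsh-p2}, Lemma~\ref{lem:strictly-ss}).

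Two smaller points. For $k=k_e$ you never show that the general sheaf is a general extension with $G$ general stable; the paper uses the complete family of Lemma~\ref{lem:complete} together with prioritariness. Also, Serre duality gives $\Ext^2(G,\oo(k))\cong\Hom(\oo(k+3),G)^*$, not $\Hom(\oo(k-3),G)^*$.
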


We prove the theorem in three main stages.
\begin{enumerate}[Step 1:]
    \item We prove that the $\oo(k)$-Segre stratum in $M(e)$ is non-empty and contains a unique component of maximum dimension, whose general member is a general extension \eqref{eq:ext-small-strata} as described. For $k=k_e$, we do this using a general construction of a complete family (Lemma \ref{lem:complete}).
    For $k>k_e$, we use the machinery of Bridgeland wall crossings and careful dimension estimates.
    \item We use the result of Step 1 and elementary modifications to obtain a similar result for the $I_{Z_m}(k)$-Segre strata (Proposition \ref{prop:rk-1-subsh-p2}).
    \item We combine the results of Step 1 and Step 2 with a degeneracy locus argument (Proposition \ref{prop:degeneracy-locus-codim}) to prove that the $\oo(k)$-Segre strata are irreducible and nested.
\end{enumerate}

Step 1 is the most difficult part of the proof. While it is clear that every sheaf $E$ in the $\oo(k)$-Segre stratum admits a short exact sequence \eqref{eq:ext-small-strata}, controlling the dimension of such $E$ requires control over $G$, for instance showing that $G$ is stable.
Alternately, one can start with stable $G$ and consider such extensions, but then one must show $E$ is stable, as in Lange's conjecture.
The case of rank 2 vector bundles $E$ was studied by Roa-Leguizam\'on, Torres-L\'opez, and Zamora \cite{RL-TL-Zam21}, where stability can be checked via the vanishing of $H^2(E)$.
Our main strategy to handle arbitrary ranks
is to use Bridgeland wall crossings to control the loci of $E$ for which $G$ is not general, building on the beautiful work by Li and Zhao \cite{LiZhao19}.

Combining Proposition \ref{prop:rk-1-subsh-p2} with the theorem yields the following main corollary about stability of extensions over $\bP^2$.

\begin{cor}\label{cor:lange-p2} Over $X=\mathbb{P}^2$, let $g$ be a class such that $M(g)$ is of positive dimension and $k \in \bZ$ satisfy $k<\mu(g)$.
Let $G \in M(g)$ be a general stable sheaf. Then, a general non-split extension $E$ as in \eqref{eq:ext} is stable in the following cases:
    \begin{enumerate}[(a)] 
        \item $F\cong\oo(k)$, assuming $\chi(g(-k-1))\le 0$ and $M(g+[\oo(k)])$ is of positive dimension;
            \item $F\cong I_{Z_m}(k)$ for general $Z_m \in (\bP^2)^{[m]}$, assuming $r(g)\geq 1$ and $\chi(g(-k))\le 0$.
        \end{enumerate}
\end{cor}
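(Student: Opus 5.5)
Part (a) follows from Theorem \ref{thm:O(k)-Segre-strata} applied to $e := g + [\oo(k)]$, and part (b) follows from Proposition \ref{prop:rk-1-subsh-p2}; in both cases we also need a dimension count on the extension side.

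\emph{Part (a).} Set $e := g+[\oo(k)]$. Then $M(e)$ has positive dimension by hypothesis and $r(e)\ge 1$. We first locate $k$ in the range of the theorem. Since $\mu(\oo(k)) = k < \mu(g)$, we have $k<\mu(e)$. Next, $\chi(e(-k-1)) = \chi(\oo(-1)) + \chi(g(-k-1)) = \chi(g(-k-1))\le 0$. It follows that $k_e\le k$: if instead $k_e \geq k+1$, one should be able to show, using the shape of $\chi(e(-j))$ as a function of $j$ (it is quadratic in $j$ with positive leading coefficient $r/2$), that $\chi(e(-k-1))>0$, a contradiction. I expect this to need a short argument, since $\chi(e(-j))>0$ at $j=k_e$ does not immediately give positivity at $j=k+1\le k_e$. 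An alternative is to read $k_e$ as the largest $j$ below $\mu$ with $\chi>0$ and use that $\chi(e(-j))$ is decreasing in $j$ for $j<\mu(e)-3/2$.

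With $k_e\le k<\mu(e)$ established, Theorem \ref{thm:O(k)-Segre-strata} applies. The general member of the $\oo(k)$-Segre stratum is stable and is a general extension $0\to\oo(k)\to E\to G\to 0$ with $G$ a general stable sheaf of class $g$. This holds except when $k=k_e$ and $M(e-[\oo(k)])=M(g)$ is empty, and that exception does not occur because $M(g)$ has positive dimension.

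The remaining step is to pass from ``the general member of the stratum is a general extension'' to ``a general extension of a general $G$ is stable''. I will consider the family $\mathcal{P}$ of extensions, fibered over an open set of $M(g)$ with fibers $\bP\Ext^1(G,\oo(k))$; this space is irreducible. The theorem says the extensions fill out (a dense subset of) the stratum. Stability is an open condition in the family, and by the theorem some member of the dominant family is stable, so the general member of the irreducible space $\mathcal{P}$ is stable. To justify that the theorem's statement gives a stable point in this particular irreducible family, I will use that the stratum is irreducible and that its general member arises from a general $G$ and a general extension class. This transfer is the main subtlety, and the key fact is that $\mathcal{P}$ is irreducible.

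\emph{Part (b).} Here I use Proposition \ref{prop:rk-1-subsh-p2}, the $I_{Z_m}(k)$ analogue of Step 1, with $e=g+[I_{Z_m}(k)]$. I expect its hypotheses to be exactly $r(g)\ge1$ and $\chi(g(-k))\le 0$, the latter placing $k$ in the non-open range (for $m=0$ it matches $k>k_e$). That proposition gives that the general member of the $I_{Z_m}(k)$-Segre stratum is stable and is a general extension of a general $G$ by $I_{Z_m}(k)$ with $Z_m$ general. The same irreducibility argument over the base $(\bP^2)^{[m]}\times M(g)$ then concludes. The main obstacle in both parts is confirming that the hypotheses place $k$ in the admissible range, which requires care with the behavior of $\chi$ near $\mu$.
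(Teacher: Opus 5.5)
Your proposal is correct and is essentially the paper's proof. The paper applies Theorem \ref{thm:O(k)-Segre-strata} to $e=g+[\oo(k)]$ in (a) and Proposition \ref{prop:rk-1-subsh-p2} to $e=g+[I_{Z_m}(k)]$ in (b), having first reduced, via irreducibility of the extension family and openness of stability, to exhibiting a single stable extension. The checks you left open are routine: $\chi(e(-j))$ is decreasing in $j$ for $j<\mu(e)+\tfrac32$ (not $\mu(e)-\tfrac32$), which gives $k\ge k_e$ at once in (a), and in (b) $\chi(e(-k))+m=\chi(g(-k))+1\le 1$ forces $\chi(e(-k))\le 0$, hence $k>k_e$ and $\Delta(e)>1$, so $M(e)$ is positive-dimensional, as the paper verifies.
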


We also use Theorem \ref{thm:O(k)-Segre-strata} to deduce a number of other results on $\bP^2$.
\begin{itemize}
    \item We deduce a lower bound on the dimension of certain Brill--Noether strata (Corollary \ref{cor:BN}), which in many cases shows that the Brill--Noether strata have a component of dimension larger than the expected dimension (Remark \ref{rem:BN-expdim}).
    \item We show that every general stable sheaf admits a two-step resolution by line bundles (Corollary \ref{cor:gen-res}), extending Gaeta's resolution for ideal sheaves to higher ranks. We also show that for $k>k_e$, general stable sheaves in the $\oo(k)$-Segre stratum admit a two-step resolution involving four line bundles (Corollary \ref{cor:small-strata-res}).
    \item We derive a bound on general Segre invariants of all ranks (Corollary \ref{cor:gen-Segre-bound}).

\end{itemize}

We organize the paper as follows.
In \S \ref{sec:ext}, we focus on extensions, including a construction of a complete family of sheaves and some consequences related to stability of extensions and Weak Brill--Noether.
In \S \ref{sec:segre}, we define the Segre invariant and the refined Segre stratification. We also obtain the expected dimension of certain refined Segre strata over rational surfaces and K3 surfaces. 
The remainder of the paper is focused on developing our main results on the projective plane.
In \S \ref{sec:p2-general}, we use classical approaches to prove the case $k=k_e$ of Theorem \ref{thm:O(k)-Segre-strata} and make careful Euler characteristic calculations related to the Dr\'ezet--Le Potier curve. In \S \ref{sec:bridgland}, we provide background on Bridgeland stability conditions and use wall-crossing calculations to complete Step 1 of the proof of Theorem \ref{thm:O(k)-Segre-strata}. In \S \ref{sec:conseq}, we complete Steps 2 and 3 of the proof of Theorem \ref{thm:O(k)-Segre-strata}, prove Corollary \ref{cor:lange-p2}, and deduce results related to Brill--Noether strata and resolutions of stable sheaves by line bundles. 

\vskip8pt
\textit{Acknowledgments.} We thank Xiaolei Zhao and Zhixian Zhu for helpful discussions. Claude was used to help produce \textsc{Tikz} figures and for copy-editing assistance.  

\section{Extensions over general surfaces}\label{sec:ext}

This section contains results related to complete families of sheaves, as well as special cases in which stability of extensions can be proven.

\subsection{Complete families via extensions}

Over a smooth projective curve or surface, for classes $f$ and $g$, let $\xi=\min \{\,\ext^1(G,F)\mid (F,G)\in M^\stable(f)\times M^\stable(g)\,\}$, which is assumed to be positive.
Define
\begin{equation}\label{eq:non-special-locus}
    N=\{\,(F,G)\in M^{\mathrm{s}}(f)\times M^{\mathrm{s}}(g)\mid
        \ext^1(G,F)=\xi
        ,\ \Ext^1(F,G)=0
    \,\}.
\end{equation}

\begin{lem}\label{lem:complete}
    Let $P$ denote the following family of extensions:
    \begin{align}\label{eq:ext-complete}
        P=\{\,\text{nontrivial }0\to F\to E\to G\to 0\mid (F,G)\in N\,\}/\bC^*.
    \end{align}
    Then $P$ parametrizes a complete family of sheaves $E$.
\end{lem}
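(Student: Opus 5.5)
We need to show that the Kodaira–Spencer map of the family $P$ is surjective at every point: for each $[0\to F\to E\to G\to 0]\in P$, the map $T_{[E]}P\to \Ext^1(E,E)$ should hit all of $\Ext^1(E,E)$. Before this we check that $P$ really is a family. Over $N$ the dimension $\ext^1(G,F)=\xi$ is constant, so $\Ext^1(G,F)$ forms a vector bundle over $N$, at least étale locally or after passing to a quasi-universal family. Its projectivization then carries a universal extension, using the usual twisting by $\oo(1)$. Hence $P$ is smooth, fibered over $N$ with fibers $\bP^{\xi-1}$, and carries a flat family of sheaves $E$.

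The key tool is the long exact sequences obtained by applying $\Hom(-,-)$ to the extension $0\to F\to E\to G\to 0$, compared with the deformation theory of the extension itself. A first-order deformation of the triple $(F\to E\to G)$ is governed by the Ext groups of the filtered object. Concretely, the tangent space to $P$ at a point surjects onto the tangent space of $N$, namely $\Ext^1(F,F)\oplus\Ext^1(G,G)$, since $M^{\stable}$ is smooth at these points, or at least the deformations are unobstructed along $N$ by local constancy. The fiber direction contributes $\Ext^1(G,F)/\bC\cdot e$.

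We then filter $\Ext^1(E,E)$. There is a map $\Ext^1(E,E)\to \Ext^1(F,G)$, given by composing with $F\to E$ and $E\to G$. The vanishing $\Ext^1(F,G)=0$ means every infinitesimal deformation of $E$ preserves the subsheaf $F$ to first order. In other words, $\Ext^1(E,E)$ is the image of $\Ext^1_{\mathrm{filt}}(E,E)$, the Ext group of filtered objects, which governs deformations of the pair $F\subset E$. The standard exact sequence
\[
\Ext^1_{\mathrm{filt}}(E,E)\to \Ext^1(E,E)\to \Ext^1(F,G)
\]
then gives surjectivity from filtered deformations. Filtered deformations in turn sit in
\[
\Ext^1(G,F)\to \Ext^1_{\mathrm{filt}}(E,E)\to \Ext^1(F,F)\oplus\Ext^1(G,G)\to \Ext^2(G,F).
\]
So every filtered deformation induces deformations of $F$ and $G$, which remain stable since stability is open. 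It also induces a deformation of the extension class in the fiber direction. What remains is to show that these are realized inside $P$. This requires the deformed pair $(F_t,G_t)$ to stay in $N$, which holds because $\Ext^1(F,G)=0$ is an open condition and $\ext^1(G,F)=\xi$ is minimal and so is open by semicontinuity. Hence the deformation lifts to $P$ by the vector bundle structure, and the Kodaira–Spencer map is onto.

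The main obstacle is making the filtered-deformation step rigorous. The plan is to argue concretely with a first-order deformation $E_\epsilon$ over $\bC[\epsilon]/\epsilon^2$. The composite $F\to E\to E_\epsilon\to$ gives an obstruction to extending $F\subset E$ to a flat subsheaf $F_\epsilon\subset E_\epsilon$, and this obstruction lies in $\Hom(F,G)\otimes$ or $\Ext^1(F,G)$. Here we use the relative Quot scheme: its tangent space is $\Hom(F,G)$ and its obstruction space is $\Ext^1(F,G)=0$, so the map $\Quot\to\mathrm{Def}(E)$ is smooth and in particular surjective on tangent spaces. Then $F_\epsilon$ and $G_\epsilon=E_\epsilon/F_\epsilon$ are flat, and $(F_\epsilon,G_\epsilon)$ lies in $N$ by openness. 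The extension $E_\epsilon$ is therefore a point of the vector bundle over $N$, nonzero because it restricts to a nonzero class, so it comes from $T P$.
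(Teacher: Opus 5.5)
Your proposal is correct, and its final paragraph is essentially the paper's proof. The relative Quot scheme over the (mini-)versal deformation space of $E$ has relative obstruction space $\Ext^1(F,G)=0$, so its Kodaira--Spencer map onto $\Ext^1(E,E)$ is surjective, and each lifted quotient completes to an extension with $(F',G')\in N$ by openness, hence factors through $P$. The preliminary filtered-$\Ext$ discussion and the claims that $P$ and $M^{\stable}$ are smooth are not needed (and smoothness is not justified on a general surface), so you can drop them.
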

Notice that $P$ is a projective bundle over $N$.

\begin{proof}
    Let $\alpha$ denote an extension of the form
    $0\to F\to E\to G\to 0$ and let $[\alpha]\in P$ denote the corresponding class in the projectivized extension bundle. 
Let $A$ be a mini-versal deformation space of $E$, which is given by a complete local noetherian algebra, with a mini-versal family $\mathcal{E}_A$ over $A\times X$, which is given by a pro-family. See \cite[\S 2.9 \& Thm. 3.6]{Nitsure-deformation}.
Let $0\in A$ denote the unique closed point.  

Let $\pi_A\colon A\times X \to A$ denote the projection and $Q=\Quot_{\pi_A}(\mathcal{E}_A,g)$ denote Grothendieck's relative Quot scheme. 
Let $[q]\in Q$ denote the equivalence class of the quotient $q$. 
Then, we have the following exact sequence
\begin{align*}
    0\to \Hom(F, G)\to T_{[q]}Q\xrightarrow{
    } T_0 A\cong \Ext^1(E,E)\xrightarrow{\omega_+}\Ext^1(F,G). 
\end{align*}
Since $\Ext^1(F,G)=0$ by assumption,
the Kodaira--Spencer map
$T_{[q]}Q\to T_0 A\cong \Ext^1(E,E)$ is surjective. 
Given a quotient $q^\prime \colon E^\prime \to G^\prime$, we can complete it to a short exact sequence $0\to F^\prime \to E^\prime \to G^\prime\to 0$. 
Due to the openness of stability, if $[q^\prime]$ is close to $[q]$, we can assume that $F^\prime$ and $G^\prime$ are also stable.
Given two representatives of $[q^\prime]$, say $q_1^\prime$ and $q^\prime_2$, there is a nonzero scalar $a$ such that $q^\prime_1=aq^\prime_2$. 
Therefore, the corresponding extensions are also equivalent. 
Thus, locally around $[q]$ and $[\alpha]$, there is a natural map from $Q$ to $P$.
Its tangential map makes the following diagram commutative:
\begin{equation*}
    \begin{tikzcd}[column sep=small]
T_{[q]}Q \arrow[rr] \arrow[dr, two heads] &   & T_{[\alpha]}P \arrow[dl] \\
                                   & T_0A. &                  
\end{tikzcd}
\end{equation*}
Here, the slanted arrows are Kodaira--Spencer maps. 
Therefore, the Kodaira--Spencer map $T_{[\alpha]}P\to T_0A$ is also surjective. Thus, the family $P$ of sheaves $E$ is complete.
\end{proof}

\begin{rem}
    The naive map sending an extension to the quotient does not provide a well-defined map from $P$ to $Q$, because equivalent extensions might produce non-equivalent quotients. However, if the middle term $E$ is simple, the map is well-defined. 
\end{rem}

\begin{rem}\label{rem:F=L^r}
Suppose that $L$ is a line bundle and $L^\ell$ is the only semistable sheaf of class $[L^\ell]$. Then in the definition of $N$, 
we can allow $F\cong L^\ell$ and Lemma \ref{lem:complete} still holds.
The proof needs to be modified as follows.
We consider the frame bundle $I$ over $Q$, where the data contain $[q]$ and an isomorphism $g\colon F\to L^\ell$. Then, we have a commutative diagram of linear maps:
    \begin{equation*}
        \begin{tikzcd}
            T_{([q],g)} I \arrow{r} \arrow[two heads]{d} & T_{[\alpha]}P \arrow{d}\\
            T_{[q]} Q \arrow[two heads]{r} & T_0 A.
        \end{tikzcd}
    \end{equation*}
    Then, the right arrow is also surjective and the family $P$ is complete.
\end{rem}

When the difference of slopes is not too small compared to the genus, the lemma yields another proof of Lange's conjecture over curves, based on the weak Brill--Noether result and approximations by stable bundles.
\begin{cor}\label{cor:lange-cur}
    Over a smooth projective curve $C$ of genus $\geq 2$, let $F$ and $G$ be two general stable vector bundles of classes $f$ and $g$ respectively, such that $\mu(F)<\mu(G)$ and $\chi(F,G)\geq 0$. Then, a general nontrivial extension $E$ of $G$ by $F$ is stable.  
\end{cor}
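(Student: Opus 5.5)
We apply Lemma \ref{lem:complete} on the curve $C$ and combine it with weak Brill--Noether on $C$ and the openness of stability. On a curve $\Ext^2$ vanishes, so $\ext^1(G,F)-\hom(G,F)=-\chi(G,F)$ and $\ext^1(F,G)-\hom(F,G)=-\chi(F,G)$.

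First we check that the locus $N$ of \eqref{eq:non-special-locus} is nonempty, indeed dense. Since $\mu(F)<\mu(G)$ and both sheaves are stable, $\Hom(G,F)=0$. Hence $\ext^1(G,F)=-\chi(G,F)$ is constant, and it equals $r(f)r(g)(\mu(G)-\mu(F)+g-1)>0$ because the genus satisfies $g(C)\ge 2$. So $\xi$ is positive and is attained everywhere. For the condition $\Ext^1(F,G)=0$ we use that $\chi(F,G)=r(f)r(g)(\mu(G)-\mu(F)+1-g)\ge 0$. Weak Brill--Noether on curves (the general tensor product $F^\vee\otimes G$ of general stable bundles with $\chi\ge 0$ has no $H^1$; this is Hirschowitz's theorem) gives $\Ext^1(F,G)=H^1(F^\vee\otimes G)=0$ for general $(F,G)$. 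Thus $N$ is a dense open subset of $M^\stable(f)\times M^\stable(g)$, which is irreducible. The family $P$, a projective bundle over $N$, is then irreducible.

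Next we apply Lemma \ref{lem:complete}: $P$ is a complete family of sheaves $E$ of class $e=f+g$. The key point is that completeness means the Kodaira--Spencer map is surjective at each point. So the image of $P$ in the stack of coherent sheaves of class $e$ contains an open neighborhood of each of its points, and hence it is open. We need to show that this open image meets the open locus of stable bundles. For this we use approximation by stable bundles. Fix any $E_0=[\alpha_0]\in P$. The moduli stack of vector bundles of class $e$ on a curve is smooth and irreducible, and stable bundles form a dense open substack of it when $g(C)\ge 2$. Therefore some small deformation of $E_0$ is stable. Completeness at $[\alpha_0]$ realizes this deformation by an extension in $P$ near $[\alpha_0]$. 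So $P$ contains a point whose middle term is stable.

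Finally, stability is open in flat families, so the set of $[\alpha]\in P$ with $E$ stable is a nonempty open subset of the irreducible $P$, and hence it is dense. The general point of $P$ corresponds to general $(F,G)$ and a general nontrivial extension, which is the claim.

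The main obstacle is the last transfer: we must argue that the "image" of a complete family is open in the stack, so that it meets the dense stable locus. On a curve this is straightforward because the stack of bundles is smooth with $\Ext^2=0$: the family is versal at each point, so every nearby bundle, including a stable one, appears in $P$. The other place that needs care is invoking weak Brill--Noether correctly, namely the vanishing of $H^1(F^\vee\otimes G)$ for general $F,G$ when $\chi\ge0$.
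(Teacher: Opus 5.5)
Your proof is correct and follows the paper's argument. Hirschowitz's theorem gives $\Ext^1(F,G)=0$, and stability gives $\Hom(G,F)=0$, so $N\neq\emptyset$; Lemma \ref{lem:complete} makes $P$ complete; and density of stable bundles together with openness of stability finishes the proof. The paper cites Narasimhan--Ramanan's result that every vector bundle deforms to a stable one, which is what your smooth, irreducible stack argument amounts to, and your remarks on irreducibility of $P$ and openness of the image fill in steps the paper leaves implicit.
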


\begin{proof}
    According to a theorem of Hirschowitz, see e.g. \cite[Thm. 1.2]{RussoTei99}, $\Ext^1(F,G)=0$. On the other hand, $\Hom(G,F)=0$ by stability. Therefore, $N$ as defined in \eqref{eq:non-special-locus} is non-empty. Then, by Lemma \ref{lem:complete}, $P$ defines a complete family of vector bundles $E$. 
    By \cite[Prop. 2.6]{NarasimhanRamanan-deformation}, every vector bundle deforms to a stable bundle. 
    Therefore, a general member of this complete family is stable. 
\end{proof}

\subsection{Weak Brill--Noether}\label{sec:wbn}
    If 
    \begin{equation}\label{eq:bir-rul}
        \pi\colon X \to C
    \end{equation} is a birationally ruled surface, then
    a coherent sheaf $E$ on $X$ is \emph{prioritary} (with respect to $\pi$) if it is torsion-free and if $\Ext^2(E, E(-\alpha_p)) = 0$ for all $  p \in C$, where $\alpha_p = \pi^{-1}(p)$ denotes the fiber over $p$. These $\alpha_p$ are all numerically equivalent, and we write $\alpha \in \operatorname{NS}(X)$ for the numerical class of these $\alpha_p$.

For a polarization $H$ satisfying the condition 
\begin{equation}\label{eq:pol-pri}
    H\cdot (K_X+\alpha)<0,
\end{equation}
every $H$-semistable sheaf $E$ is prioritary \cite[Pf. of Thm. 1]{Wal98}. The stack of prioritary sheaves of fixed Chern classes is irreducible \cite[Prop. 2]{Wal98}. So is the moduli scheme \cite[Thm. 1]{Wal98}.

For rational surfaces, the general construction of complete families of sheaves (Lemma \ref{lem:complete}) allows us to show the following.

\begin{prop}\label{prop:bn}
    Over a rational surface $X\not=\bP^2$, fix a birational ruling \eqref{eq:bir-rul} and an ample divisor $H$ satisfying \eqref{eq:pol-pri}.
    Let $e$ be a class of rank $r>0$, first Chern class $c_1$, and second Chern class $c_2$, where $c_1$ is the class of a smooth subcurve. 
    Under the condition
    \begin{align*}
        \frac{c_1^2+c_1\cdot K_X}{2}+\alpha\cdot c_1\leq c_2\leq\frac{c_1^2-c_1\cdot K_X}{2},
    \end{align*}
    a general sheaf in $M(e)$
    has vanishing higher cohomology.
\end{prop}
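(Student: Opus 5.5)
We plan to realize a general sheaf of $M(e)$ as a general extension
\begin{equation*}
0\to \oo_X^{\,r-1}\to E\to L\otimes I_Z\to 0,
\end{equation*}
where $L=\oo_X(c_1)$ and $Z$ is a general zero-dimensional subscheme of length $c_2$. This class is $e$. We then apply Lemma~\ref{lem:complete} in the form of Remark~\ref{rem:F=L^r}, with $F=\oo_X^{r-1}$ and $G=L\otimes I_Z$. That remark applies because $\oo_X^{r-1}$ is the only semistable sheaf of its class: on a rational surface $h^1(\oo_X)=h^2(\oo_X)=0$, so this class is rigid. If $r=1$ there is no extension to form, and we argue directly with $L\otimes I_Z$.

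The first step is to check the hypotheses defining $N$. We need $\Ext^1(F,G)=H^1(L\otimes I_Z)^{\oplus(r-1)}=0$. Since $c_1$ is the class of a smooth curve, rational surface theory gives $h^1(L)=h^2(L)=0$. For general $Z$ of length $c_2\le \chi(L)=(c_1^2-c_1\cdot K_X)/2+1$ the points impose independent conditions, so $h^1(L\otimes I_Z)=0$; the upper bound on $c_2$ is what we use here. We also need $\ext^1(G,F)=(r-1)h^1(L^{-1}\otimes I_Z\otimes\omega_X)^{\vee}$ (by Serre duality, this is $H^1(L\otimes I_Z)$-type data twisted by $K_X$) to be constant and positive on an open set; upper semicontinuity lets us restrict to the locus where it is minimal. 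Then $P$ parametrizes a complete family.

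The second step is cohomology vanishing for the members of the family. The long exact sequence gives $H^i(E)$ sandwiched between $H^i(\oo_X)^{r-1}$ and $H^i(L\otimes I_Z)$, and for $i=1,2$ all of these vanish by the first step. Hence every $E$ in $P$ has $h^1(E)=h^2(E)=0$.

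The third step, which we expect to be the main obstacle, is to show that these $E$ lie in the irreducible stack of prioritary sheaves, so that completeness transfers the open vanishing condition to the general point of $M(e)$. We need $\Ext^2(E,E(-\alpha_p))=0$. Filtering by the extension reduces this to the four groups $\Ext^2(A,B(-\alpha))$ for $A,B\in\{\oo^{r-1},L\otimes I_Z\}$. By Serre duality these become $\Hom$ groups such as $H^0(L(K_X+\alpha))$, $H^0(L^{-1}(K_X+\alpha))$ and $H^0(\oo(K_X+\alpha))$, tensored by ideal sheaves. The group $H^0(\oo(K_X+\alpha))$ vanishes since $(K_X+\alpha)\cdot\alpha=-2<0$ on a ruling fiber. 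The genuinely delicate one is $\Hom(\oo,L\otimes I_Z(K_X+\alpha))$, and here the lower bound $c_2\ge (c_1^2+c_1\cdot K_X)/2+\alpha\cdot c_1$ enters: we choose $Z$ general of length at least $h^0(L(K_X+\alpha))$, and this bound is at least that dimension by Riemann--Roch together with vanishing of higher cohomology. Once the $E$ are prioritary, the family is complete and the prioritary stack is irreducible (\cite{Wal98}). The open condition $h^1=h^2=0$ then holds at the general point of that stack. Since $H$ satisfies \eqref{eq:pol-pri}, semistable sheaves form an open substack of it, so a general sheaf in $M(e)$ has vanishing higher cohomology.
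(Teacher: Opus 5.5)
Your proposal is correct in outline and follows the paper's framework: Lemma~\ref{lem:complete} via Remark~\ref{rem:F=L^r} with a trivial subbundle, prioritariness checked on the pieces of the extension by Serre duality, then Walter's irreducibility. The decomposition, however, is different.

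\emph{Comparison.} The paper takes $F=\oo^r$ and $G$ the pushforward of a general line bundle on the smooth curve $C$ with $c_2(G)=c_2$. Every vanishing it needs ($H^1(G)$, $H^0(G(K_X))$, $H^0(G(K_X+\alpha))$) then follows from weak Brill--Noether on $C$. The two bounds on $c_2$ are exactly $\chi(G)\ge 0$ and $\chi(G(K_X+\alpha))\le 0$, and no sign condition on $c_2$ arises. You take $F=\oo^{r-1}$ and $G=\oo(C)\otimes I_Z$, replacing Brill--Noether on the curve by interpolation for general points on $X$. The same bounds reappear as $c_2\le\chi(\oo(C))-1$ and $c_2\ge\chi(\oo(C+K_X+\alpha))$. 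What you gain: $E$ is automatically torsion-free (the paper needs the extension to be general for this), and everything is a line-bundle computation on $X$. What you pay: you need real control of $h^1(\oo(C))$ and $h^0(\oo(C+K_X+\alpha))$ on the surface, which the curve approach gets from Euler characteristics alone.

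\emph{Steps to repair.}
\begin{itemize}
\item Your claim that ``rational surface theory gives $h^1(L)=0$'' is false as stated. In fact $h^1(\oo(C))=h^0(\oo_C(K_X))$, which equals $1$ for a $(-2)$-curve, or for a smooth member of $|-K_X|$ on the blow-up of $\bP^2$ at the nine base points of a cubic pencil. You must use the hypothesis. The $c_2$-range is non-empty only if $(K_X+\alpha)\cdot c_1\le 0$, so $K_X\cdot C\le -\alpha\cdot C\le 0$. If $K_X\cdot C<0$, then $h^0(\oo_C(K_X))=0$ for irreducible $C$. If $K_X\cdot C=0$, then $C$ is a $(-2)$-curve in a fiber and the range is $c_2=-1$, which the Bogomolov inequality excludes. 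Similarly, you need $c_2\ge 0$ for $Z$ to exist. The lower bound $g(C)-1+\alpha\cdot C$ is $\ge 0$ unless $C$ is a rational curve in a fiber, and there Bogomolov again rules out $c_2=-1$.
\item Your ``vanishing of higher cohomology'' for $\oo(C+K_X+\alpha)$ needs an argument. One has $h^1(\oo(C+K_X+\alpha))=h^1(\oo(-C-\alpha))=h^0(\oo_{C+\alpha_p})-1$. This vanishes when $\alpha\cdot C\ge 1$, since $C\cup\alpha_p$ is then reduced and connected, but not when $\alpha\cdot C=0$. In the latter case use instead that $(C+K_X+\alpha)\cdot\alpha=-2$ and $\alpha$ is nef, so $h^0(\oo(C+K_X+\alpha))=0$.
\item You never check $\xi>0$. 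For general $Z$ one gets $\xi=\max(c_2-g(C),0)$, which vanishes when $\alpha\cdot C\le 1$ and $c_2=0$, so Lemma~\ref{lem:complete} does not literally apply there. The split family $\oo^{r-1}\oplus\oo(C)\otimes I_Z$ is still complete, since all of its $\Ext^1(E,E)$ comes from moving $Z$, and it has vanishing cohomology. So the conclusion survives; in that case $M(e)$ is in fact empty for $r\ge 2$.
\item Minor: the Serre dual of $\Ext^1(G,\oo)$ is $H^1(\oo(C+K_X)\otimes I_Z)$, not the $L^{-1}$ expression you wrote.
\end{itemize}
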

Namely, in these situations, a general sheaf satisfies the weak Brill--Noether property.

\begin{proof}
We take $G$ to be the push-forward of a general line bundle $L$ on a smooth subcurve representing $c_1$ satisfying $c_2(G)=c_2$.
Consider extensions of the form
\begin{align}\label{eq:ext-by-triv}
    0\to \oo^r\to E\to G\to 0.
\end{align} 
Notice that $\Hom(G,\oo^r)=0$ and $\Ext^2(G,\oo^r)\cong \Hom(\oo^r,G(K_X))^\vee$. 
The conditions on $c_2$ ensure $\chi(G(K_X))\leq 0$, $\chi(G(K_X+\alpha))\leq 0$, and $\chi(G)\geq 0$. 
According to weak Brill--Noether for curves \cite[Prop. 3.8]{Coskunetal-survey24}, 
$\Hom(\oo,G(K_X))=\Hom(\oo,G(K_X+\alpha))=\Ext^1(\oo,G)=0$. In particular, since $G$ has vanishing higher cohomology, so does $E$. 

We show that $E$ is prioritary.
By Serre duality, we have $\Ext^2(E,E(-\alpha))\cong \Hom(E,E(\alpha+K_X))^\vee$. We also have the exact sequence
\begin{align*}
    0\to \Hom(G,E(\alpha+K_X)) \to \Hom(E,E(\alpha+K_X)) \to \Hom(\oo^r,E(\alpha+K_X)).
\end{align*}
Since $G$ is torsion, $\Hom(G,E(\alpha+K_X))=0$. 
On the other hand, we have the exact sequence
\begin{align*}
    0\to H^0(\oo^r(\alpha+K_X))\to H^0(E(\alpha+K_X)) \to H^0(G(\alpha+K_X)).
\end{align*}
Since $H^0(\oo(\alpha+K_X))=0$ and $H^0(G(K_X+\alpha))=0$,
$H^0(E(\alpha+K_X))=0$.
Therefore, $E$ is prioritary. 

Taking $F \cong \oo^r$ in the definition of $N$,
the family $P$ of extensions as in \eqref{eq:ext-complete} is a projective bundle over $M^\stable(g)$. 
According to Lemma \ref{lem:complete}, the family $P$ is complete. Thus, $P$ parametrizes a complete family of prioritary sheaves. Hence, a general extension $E$ is stable and it is a general member in $M(e)$. 
\end{proof}
The case over $\mathbb{P}^2$ is fully understood \cite{GotHir98}. The construction \eqref{eq:ext-by-triv}, together with elementary modifications, was applied in \cite{Gould-Lee-Liu22} over $\mathbb{P}^2$ to study special sheaves. 

\subsection{Special cases of stability of extensions}

Let $d=c_1\cdot H$ denote the degree. 
\begin{prop}\label{prop:lange-small-slope-diff-coprime}
    Over a smooth projective surface $X$, suppose $H$ is a rational ample divisor such that $\{\,c_1(E)\cdot H\mid E\mbox{ is a coherent sheaf}\,\}=\bZ$. Let $F$ and $G$ be general stable sheaves. Let $E$ be a non-split extension of $G$ by $F$. Assume that
    \begin{align}\label{eq:farey}
        r(F)d(G)-r(G)d(F)=1,
    \end{align}
    that is, the reduced fractions $\mu(F)$ and $\mu(G)$ form a Farey pair. 
    Then $E$ is stable in the following two situations:
    \begin{enumerate}[(a)]
        \item $r(F)\geq 2$;
        \item $r(F)=1$, $\chi(G,F)+r(G)c_2(F)<0$, and the extension $E$ is general.
    \end{enumerate}
    \end{prop}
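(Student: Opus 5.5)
Let $K\subset E$ be a saturated subsheaf with $0<r(K)<r(E)$, or with $r(K)=r(E)$ and $E/K$ torsion. The plan is to show that $K$ cannot destabilize $E$. The argument combines the arithmetic of Farey pairs with the stability of $F$ and $G$. The one case this does not settle is reduced to a splitting question about the extension class, and that splitting question is where the hypotheses (a) or (b) are used.

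\emph{Step 1 (arithmetic).} Write $\mu(F)=d_F/r_F<\mu(G)=d_G/r_G$. By \eqref{eq:farey}, $\mu(E)$ is the mediant $(d_F+d_G)/(r_F+r_G)$. The assumption on $H$ makes every degree an integer. For a Farey pair, every fraction strictly between $\mu(F)$ and $\mu(G)$ has denominator at least $r_F+r_G=r(E)$, and the only such fraction with denominator exactly $r(E)$ is the mediant. Hence a subsheaf $K$ of rank $<r(E)$ with $\mu(K)\ge\mu(E)$ must satisfy $\mu(K)\ge\mu(G)$. Relation \eqref{eq:farey} also forces $\gcd(r_F,d_F)=\gcd(r_G,d_G)=1$. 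So $F$ and $G$ are $\mu$-stable, and semistability coincides with stability for all classes involved.

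\emph{Step 2 (reduction).} Set $K'=K\cap F$ and let $K''$ be the image of $K$ in $G$. If $K''=0$, then $\mu(K)\le\mu(F)<\mu(E)$. Otherwise, $\mu$-stability gives $\mu(K')<\mu(G)$ when $K'\neq 0$, and $\mu(K'')\le\mu(G)$ with equality only if $r(K'')=r(G)$. Combining these with Step 1, the only way $\mu(K)\ge\mu(E)$ can hold is $K'=0$ and $K\cong K''\subset G$ with $r(K'')=r(G)$ and $d(K'')=d(G)$. Then $T:=G/K''$ is supported in dimension $0$. Since $\mu(K)=\mu(G)>\mu(E)$, such a $K$ would indeed destabilize $E$, so we must rule it out. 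Its existence means exactly that the class $\epsilon\in\Ext^1(G,F)$ of $E$ restricts to zero in $\Ext^1(K'',F)$. Equivalently, $\epsilon$ lies in the image of $\Ext^1(T,F)\to\Ext^1(G,F)$.

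\emph{Step 3, case (a).} For $r(F)\ge 2$, I would use that a general stable sheaf $F$ is locally free. Then $\Ext^1(T,F)=0$ for every $0$-dimensional sheaf $T$, by depth or by local duality. Therefore $\epsilon$ never lies in such an image, and every non-split $E$ is stable.

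\emph{Step 3, case (b).} For $r(F)=1$, write $F=I_Z\otimes L$ with $\operatorname{length}Z=c_2(F)$, and use $0\to F\to L\to \oo_Z\to 0$. Since $\Ext^1(T,L)=0$, a class $\epsilon$ that dies on some finite-colength $K''$ must map to zero in $\Ext^1(G,L)$. It therefore lies in the image of $\Hom(G,\oo_Z)\to\Ext^1(G,F)$, a subspace of dimension at most $\hom(G,\oo_Z)\le r_G\,c_2(F)$. On the other side, $\hom(G,F)=0$ by stability, so $\ext^1(G,F)\ge-\chi(G,F)$, and this exceeds $r_G\,c_2(F)$ by the hypothesis $\chi(G,F)+r(G)c_2(F)<0$. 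Hence a general extension class avoids the bad subspace, and the general $E$ is stable.

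The main obstacle is this last step. One must make sure the bad locus really is contained in the image of $\Hom(G,\oo_Z)$, uniformly over all possible $K''$, and that the bound $\hom(G,\oo_Z)\le r_G\,c_2(F)$ is valid. That bound needs $G$ to be general enough that it has rank $r_G$ fibers at the points of $Z$; otherwise one should use a local-freeness or genericity argument for $G$ near $Z$. For case (a), the delicate input is the local freeness of a general stable $F$ of rank $\ge 2$. If this fails in some range, I would instead use genericity of $F$ relative to $G$ to make the relevant $\Ext^1(T,F)\to\Ext^1(G,F)$ maps vanish.
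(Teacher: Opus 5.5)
Your proof is correct and follows essentially the same route as the paper. In both, the Farey/Pick step forces a destabilizing subsheaf to map isomorphically onto a full-rank, same-degree subsheaf of $G$ with zero-dimensional cokernel $T$; case (a) is then killed by local freeness of a general $F$, and case (b) by comparing $r(G)c_2(F)$ with $\ext^1(G,F)\ge-\chi(G,F)$. Your one refinement is to pass to $F\subset F^{\ast\ast}=L$, which puts all bad classes in the single subspace $\operatorname{im}(\Hom(G,\oo_Z)\to\Ext^1(G,F))$; this is slightly cleaner than the paper's finitely many subspaces $\Hom(G,\oo_W)$, $W\subset Z$, and removes the need for $Z$ to be reduced, though, like the paper, you still need $G$ to be locally free along $Z$ and, in (a), a general stable $F$ of rank $\ge 2$ to be locally free.
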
 
For a coherent sheaf $E$, we denote its rank and degree as $v(E)=(r(E),d(E))$.    
    By Pick's theorem, the condition \eqref{eq:farey} is equivalent to the following:
        \begin{align}\label{eq:small-triangle}
        \text{There is no integral point in the following closed parallelogram} \nonumber\\
        \text{formed by the vectors $v(F)$ and $v(G)$ other than the vertices.}
    \end{align} 

\begin{proof}
    Suppose $F^\prime\subset E$ is a stable saturated subsheaf that destabilizes $E$. In particular, $\mu(F^\prime)\geq \mu(E)>\mu(F)$. Furthermore, according to \eqref{eq:small-triangle}, $\mu(F^\prime)>\mu(E)$. Then, the composition $F^\prime\hookrightarrow E\twoheadrightarrow G$, which we denote by $\phi$, is non-zero. Therefore, $\mu(F^\prime)\leq\mu(G)$.  
    We claim that $v(F^\prime)=v(G)$.
    If not, $v(\im \phi)=v(G)$
    according to \eqref{eq:small-triangle} and $\ker \phi\not=0$. Then, $\ker \phi$ is contained in $F$. Thus, the line segment connecting $v(G)$ and $v(F^\prime)$ is not above the one connecting $v(G)$ and $v(E)$. Therefore, the integral point $v(F^\prime)$ lies on the parallelogram,
    contradicting \eqref{eq:small-triangle}.
    Thus, $v(F^\prime)=v(G)$ and $\phi$ is injective. Moreover, $T:= \operatorname{coker} \phi$ has dimension 0.

    If $T=0$, then the extension $E$ splits, which is a contradiction. So, we assume $T\not=0$. 
    Notice that $v(F)=v(G^\prime)$ 
    and $\ch_2(F)-\ch_2(G^\prime)=\ch_2(F^\prime)-\ch_2(G)=-\ch_2(T)<0$.
    We form the following commutative diagram of exact sequences:
    \begin{equation}\label{eq:special-lange-proof}
        \begin{tikzcd}
        & & 0\arrow{d} & 0\arrow{d} &\\
            & & F^\prime \arrow[equal]{r} \arrow{d} & F^\prime \arrow{d} &\\
            0 \arrow{r} & F \arrow[equal]{d} \arrow{r} &E \arrow{r} \arrow{d} & G\arrow{r} \arrow{d} &0\\
            0 \arrow{r} & F \arrow{r} & G^\prime\arrow{r} \arrow{d} & T\arrow{r} \arrow{d} &0\\
            & & 0 &0. &
        \end{tikzcd}
    \end{equation}

    If $\supp T\not\subset \sing F$, then $G^\prime$ contains non-trivial torsion, which contradicts the assumption that $F^\prime$ is saturated.
    Since in the case (a), a general $F$ is locally free, we have proven this case.

    In case (b), since $r(F)=1$, a general $F$ is the ideal sheaf of a reduced scheme, say $Z$, of dimension 0, twisted by a line bundle, say $L$. 
    Since $G^\prime$ is torsion free, $W:=\supp(T) \subset Z$. Thus, we have $G' \cong L \otimes I_{Z \setminus W}$, which in particular is stable. 
    Because $F$ and $G$ are general, we can assume that $\sing(G) \cap \sing(F) = \emptyset$.
    Since $\hom(G,G')=0$ due to stability and $\Ext^1(G,T)=0$, applying $\Hom(G,-)$ to the short exact sequence $0 \to F \to G' \to T \to 0$ yields a short exact sequence
    \begin{align}\label{eq:special-lange-proof-ses}
         0 \to \Hom(G,T) \to \Ext^1(G,F) \to \Ext^1(G,G') \to 0.
    \end{align}
    The diagram \eqref{eq:special-lange-proof} implies that the extension $E$ comes from a map $G\to T$ under the connecting homomorphism in \eqref{eq:special-lange-proof-ses}. 
    Since there are finitely many choices of $T$, to prove the statement, it suffices to show the first map is not surjective. This is guaranteed by the condition $\chi(G,F)+r(G)c_2(F)<0$, because $\ext^1(G,F)\geq
    -\chi(G,F)$ and $\length(T)\leq c_2(F)$.  
\end{proof}
The proposition generalizes part of \cite[Lem. 2.1]{Yoshioka99}, without the condition on the Picard number.
Stability of extensions of the tangent sheaf by the structure sheaf has been proven over log Fano/Calabi--Yau manifolds using the K\"ahler--Einstein metric \cite{Tian92Fano,Li2021}.
Some cases over the smooth quadric have been stated for the anticanonical polarization in \cite[Thm. 6.4]{raha2025higherrankcliffordstheorem}.

\begin{exmp}
This example is to illustrate that the condition $\chi(G,F)+r(G)c_2(F)<0$ in Proposition \ref{prop:lange-small-slope-diff-coprime} (b) is reasonable. 
Suppose that $\chi(G,F)+r(G)c_2(F)\geq 0$. Furthermore, we assume that $K_X\cdot H<0$. Other conditions remain the same.
Let $F_0=F^{\ast\ast}$. Then we have the following commutative diagram of exact sequences:
\begin{equation*}
    \begin{tikzcd}
        0\ar[r] & F \ar[r] \ar[d,hook] & E\ar[r] \ar[d, hook] & G\ar[r] \ar[d,equal] &0\\
         0\ar[r] & F_0 \ar[r] & E_0\ar[r] & G\ar[r] &0.
    \end{tikzcd}
\end{equation*}
Moreover, $\chi(G,F_0)=\chi(G,F)+\chi(G, F_0/F)=\chi(G,F)+r(G)c_2(F)\geq 0$.
Notice that $\hom(G,F_0)=0$ 
    by stability and $\ext^2(G,F_0)=\hom(F_0,G\otimes K_X)=0$, by stability and \eqref{eq:small-triangle}.  
    We expect $\ext^1(G,F_0)=0$, and thus $E_0\cong F_0\oplus G$. 
    Hence, we should have a nonzero map $E\hookrightarrow E_0\twoheadrightarrow F_0$. Thus, $E$ should not be semistable. 
\end{exmp}

Next, we apply our construction of complete families to the Hirzebruch surfaces $X=\mathbb{F}_n$, where $n\geq 0$. 
Let $\pi\colon \mathbb{F}_n\to \mathbb{P}^1$ be the corresponding projection, $\alpha$ be the fiber class, and $\beta$ be the class of a section such that $\beta^2=-n$. 
As in \S \ref{sec:wbn}, we fix an ample divisor $H$ such that $H\cdot(K_X+\alpha)<0$. Let $\nu(g)=c_1(g)/r(g)$ denote the total slope of a class $g$.

\begin{prop}\label{prop:lange-hirzebruch}
    Suppose $g\in K(X)$ such that $r(g)\geq 1$, $\chi(g)\geq 0$, $\mu(g)>0$, and 
    \begin{itemize}
        \item $\alpha\cdot \nu(g)=-1$; or
        \item $-1<\alpha\cdot \nu(g)\leq 1$ and $\beta\cdot \nu(g)\geq-1$. 
    \end{itemize}
    Assume $G\in M^\stable(g)$ is a general sheaf and that there is a stable sheaf of class $\ell[\oo]+g$, where $\ell$ is a positive integer. Then for a general nontrivial extension 
    \begin{equation*}
        0\to \oo^\ell\to E\to G\to 0, 
    \end{equation*}
    $E$ is stable.
\end{prop}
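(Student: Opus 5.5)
The argument follows the pattern of Proposition \ref{prop:bn}: apply Lemma \ref{lem:complete}, in the form of Remark \ref{rem:F=L^r} with $L=\oo$, to produce a complete family of prioritary sheaves of class $e:=\ell[\oo]+g$. Walter's irreducibility of the stack of prioritary sheaves then forces a general member to be stable.

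\textbf{Step 1: non-emptiness of $N$.} We need $\Ext^1(\oo^\ell,G)=H^1(G)^{\oplus\ell}=0$ for general $G\in M^\stable(g)$, and that $\ext^1(G,\oo^\ell)$ is constant and positive on a nonempty open set. For the first, the hypotheses $\chi(g)\ge0$, $\mu(g)>0$ and the two slope conditions are exactly what the weak Brill--Noether results on Hirzebruch surfaces (Coskun--Huizenga) require, so a general $G$ has $h^1(G)=h^2(G)=0$. Upper semicontinuity then gives an open set on which $\ext^1(G,\oo)$ takes its minimum $\xi$, so $N$ is nonempty. This minimum is positive, because otherwise every extension of $G$ by $\oo^\ell$ splits and no sheaf of class $e$ would be stable. (A split $E=\oo^\ell\oplus G$ cannot be stable when $\mu(g)>0$, and a general stable sheaf of class $e$ has $\oo^\ell\subset E$ as the evaluation of sections, with $\chi(e)=\ell+\chi(g)\ge\ell$.)

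\textbf{Step 2: completeness.} By Remark \ref{rem:F=L^r}, the projective bundle $P$ of nontrivial extensions over $N$ is a complete family of sheaves of class $e$.

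\textbf{Step 3: prioritariness.} We must show each $E$ in $P$ is prioritary, i.e. $\Hom(E,E(\alpha+K_X))=0$; torsion-freeness is clear for $\ell\ge1$ once $G$ is torsion free. Apply $\Hom(-,E(\alpha+K_X))$ to the extension. The contribution from $\oo^\ell$ is $H^0(E(\alpha+K_X))^{\oplus\ell}$, which sits between $H^0(\oo(\alpha+K_X))=0$ and $H^0(G(\alpha+K_X))$. The latter vanishes because $G$ is stable, hence prioritary, with $\mu(G(\alpha+K_X))<\mu(G)$; more precisely, $\Hom(\oo,G(\alpha+K_X))\cong\Ext^2(G,\oo(-\alpha))^\vee$. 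For general $G$ one can invoke Brill--Noether for the twisted class, using $\alpha\cdot\nu(g)\le1$ and $\beta\cdot\nu(g)\ge-1$. The contribution from $G$ is $\Hom(G,E(\alpha+K_X))$. It sits between $\Hom(G,\oo(\alpha+K_X))^{\oplus\ell}$, which vanishes by stability since $\mu(\oo(\alpha+K_X))<0<\mu(G)$ by \eqref{eq:pol-pri}, and $\Hom(G,G(\alpha+K_X))=0$, which is the prioritariness of $G$.

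\textbf{Step 4: conclusion.} A complete family of prioritary sheaves contains a dense open subset of the irreducible prioritary stack. That stack contains the stable sheaf of class $e$ assumed to exist, and stability is open, so a general $E$ in $P$ is stable.

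The main obstacle is the vanishing $H^0(G(\alpha+K_X))=0$ and $H^1(G)=0$ for general $G$. This is where the precise inequalities on $\alpha\cdot\nu(g)$ and $\beta\cdot\nu(g)$ enter, and I would try to deduce both from the known weak Brill--Noether theorem on $\mathbb{F}_n$ applied to $g$ and to $g(\alpha+K_X)$. In the case $\alpha\cdot\nu(g)=-1$, the twist has fiber degree very negative, and I would instead restrict to a general fiber, where $G|_{\alpha}$ is a sum of line bundles of negative degree, to kill sections directly.
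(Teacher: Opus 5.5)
Your proposal is essentially the paper's proof: Coskun--Huizenga for $H^1(G)=0$, then Lemma \ref{lem:complete} via Remark \ref{rem:F=L^r}, then prioritariness of $E$, then Walter's irreducibility. Your explicit vanishing of $\Hom(G,E(\alpha+K_X))$, using stability and prioritariness of $G$, correctly fills in what the paper leaves to ``similar to Proposition \ref{prop:bn}'', where $G$ was torsion; your argument for $\xi>0$ is not airtight, but it is also unnecessary, since the paper simply takes the existence of nontrivial extensions as given. The one wrong line is your first reason for $H^0(G(\alpha+K_X))=0$: prioritariness of $G$ only gives $\Hom(G,G(\alpha+K_X))=0$, and $\mu(G(\alpha+K_X))$ can be positive; the correct reason, which the paper uses via \cite[Cor.~3.7(2)]{CosHui20}, is the general-fiber argument you mention at the end, and it applies uniformly in both cases (not only when $\alpha\cdot\nu(g)=-1$) because $\alpha\cdot\nu(G(\alpha+K_X))=\alpha\cdot\nu(g)-2\le -1$.
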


\begin{proof}
    According to \cite[Cor. 3.7 (1) \& Cor. 3.10]{CosHui20}, $G$ has vanishing higher cohomology.
    Recall from Remark \ref{rem:F=L^r} that we can similarly define $N$ as in \eqref{eq:non-special-locus} by taking $F=\oo^\ell$. Since we assume that there are nontrivial extensions, $N$ is non-empty. Then $P$ as in \eqref{eq:ext-complete} provides a complete family. 

    Since $\nu(g)\cdot \alpha\leq 1$, $\nu(G(\alpha+K_X))\cdot \alpha\leq -1$. Then $\Ext^1(G,\oo^\ell(-\alpha))\cong \Hom(\oo^\ell,G(\alpha+K_X))^\vee=0$ according to \cite[Cor. 3.7 (2)]{CosHui20}. 
    We can show that $E$ is prioritary. Since it's similar to the corresponding argument in the proof of Proposition \ref{prop:bn}, we omit the proof here. 
    Since we have a complete family of prioritary sheaves, a general member is stable.
\end{proof}

\begin{rem}
    Note that the hypotheses of Proposition \ref{prop:lange-hirzebruch} force $\ext^1(G,\oo)\geq \ell$: otherwise, a general extension of $G$ by $\oo^\ell$ would contain a summand $\oo$ and hence be unstable. See \cite[p.16]{BerGolJoh23}.
\end{rem}

\section{Segre stratifications}\label{sec:segre}
We review the Segre invariant for a coherent sheaf, which was first introduced over curves, and the corresponding stratification of the moduli space. 
We then refine the stratification and demonstrate an example to justify the necessity for the refinement.
This section includes some core definitions.

\subsection{Segre invariants and stratifications}

We fix a smooth projective surface $X$ with an ample divisor $H$.
Let $E$ be a coherent sheaf and $r^\prime$ be an integer. When $0<r^\prime <r(E)$, 
    the $r^\prime$-th {\em Segre invariant} of
    $E$
    is 
    \begin{equation}\label{eq:segre}
        S_{r^\prime}(E)=\min_{F\subset E \mbox{ subsheaf of rank } r^\prime}\{\,(r^\prime c_1(E)-r(E)c_1(F))\cdot H\,\}. 
    \end{equation}
As a function, $S_{r'}$ is lower semicontinuous \cite[Lem. 2.6]{RL-TL-Zam21}. 
If $E$ is semistable, then $S_{r^\prime}(E)\geq 0$.
A variant of the Segre invariant was used by Langer \cite{Langer06} to provide an effective result on the irreducibility of the moduli space of sheaves on surfaces.

    For a semistable sheaf $E$, let $\gr^{\mathrm{JH}} (E)$ denote the associated grading of one of its Jordan--H\"older filtrations. For a fixed $e \in K(X)$,
    let 
    \begin{equation*}
        Z(r^\prime,s)=\{\,E\in M(e)\mid S_{r^\prime}(\gr^{\mathrm{JH}}(E))=s\,\}.
    \end{equation*}
We have a decomposition
\begin{equation}\label{eq:segre-strat}
    M(e)=\coprod_s Z(r^\prime,s).
\end{equation}
The Segre invariant of rank $r^\prime$ achieved by a general element $E\in M(e)$, namely, the Segre invariant corresponding to the open stratum in \eqref{eq:segre-strat}, is called the {\em general Segre invariant} of rank $r^\prime$.

\begin{lem}\label{lem:bdd}
    Fixing $r^\prime$ and $e$, the set $\{\,S_{r^\prime}(E)\mid E\in M(e)\,\}$ is finite.
    The family 
\begin{equation}\label{eq:fam-sub-segre}
\left\{\, F \;\middle|\;
\begin{array}{l}
E \in M(e),\ F \subset E \text{ is a subsheaf of rank } r^\prime,\\[0.3em]
(r' c_1(E) - r(E)c_1(F)) \cdot H = S_{r'}(E), \text{ and}\\[0.3em]
E/F \text{ is torsion free}
\end{array}
\,\right\}
\end{equation}
is bounded.
\end{lem}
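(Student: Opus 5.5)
We bound $S_{r'}$ on both sides and then control the remaining Chern data of the maximal subsheaves.

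\textbf{Finiteness of the Segre invariants.} Semistability gives $S_{r'}(E)\ge 0$ for every $E\in M(e)$. For an upper bound, use that $M(e)$ is bounded, since it is the moduli space of semistable sheaves of fixed class. Hence there is a uniform $n$ such that every $E\in M(e)$ has $\oo_X(-nH)^{\oplus N}\twoheadrightarrow E$ with $N=\chi(e(n))$; alternatively, $E(n)$ is globally generated for all $E\in M(e)$. Take $r'$ general sections of $E(n)$. They generate a subsheaf $\oo(-n)^{r'}\to E$, which is injective for general choices because $E$ is globally generated of rank $>r'$. This subsheaf has $c_1\cdot H=-r'nH^2$. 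Therefore
\[
S_{r'}(E)\le r'c_1(e)\cdot H+r(e)r'nH^2 .
\]
The invariant is integral, or at least lies in a discrete set (take $H$ integral up to scaling, or note that $c_1(F)\cdot H$ lies in a lattice), so finitely many values occur. The same bound also gives a uniform lower bound $c_1(F)\cdot H\ge C$ for any subsheaf $F$ realizing the minimum.

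\textbf{Boundedness of the family of maximal subsheaves.} Let $F\subset E$ realize the minimum, with $E/F$ torsion free. Then $F$ is saturated. Degrees are bounded both ways:
\[
C\le d(F)\le r'\mu_{\max}(E)\le r'\mu(e)+\text{const}.
\]
The upper bound holds because $E$ is semistable, so $\mu_{\max}(F)\le\mu(E)$. The key point is to reduce to a Quot-scheme argument. The quotient $G=E/F$ is torsion free of rank $r-r'$, and its degree is determined by $d(F)$, hence takes finitely many values. Every subsheaf $G'\subset G$ pulls back to a subsheaf of $E$ containing $F$. Semistability of $E$ gives
\[
d(G')\le\mu(E)\,r(G')+d(F)\text{-type bounds},
\]
so $\mu_{\max}(G)$ is uniformly bounded. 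Quotients likewise satisfy $\mu_{\min}(G)\ge\mu_{\min}(E)\ge\mu(e)-\text{const}$. Thus $G$ has bounded $\mu_{\max}$ and bounded degree, is torsion free, and is a quotient of the bounded family $M(e)$.

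By Grothendieck's lemma, a family of torsion-free quotients $E\twoheadrightarrow G$ of a bounded family with fixed rank and bounded $c_1\cdot H$ is bounded. Concretely, $E\mapsto G$ is a point of $\Quot(\oo(-n)^N, \cdot)$. What remains is to bound $\ch_2(G)$, or equivalently $c_1(G)$ in $\operatorname{NS}$ together with $\ch_2$. Grothendieck's lemma gives an upper bound on $\chi(G(m))$ via bounded $\mu_{\max}$, hence an upper bound on $\ch_2(G)$. It also shows that the classes $c_1(G)$ with bounded degree and bounded $\mu_{\max}$ form finitely many numerical classes after using the Hodge index bound $c_1(G)^2\le (c_1(G)\cdot H)^2/H^2$ and Bogomolov-type bounds. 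This is the main subtlety.

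\textbf{Main obstacle.} The hard step is the lower bound on $\ch_2(G)$, since the discriminant of $G$ could a priori go to infinity. I would try two routes. The first is Bogomolov's inequality applied to the Harder--Narasimhan factors of $G$: their slopes are bounded between $\mu_{\min}(E)$ and $\mu_{\max}(G)$ and their ranks are bounded, so each factor has $\Delta\ge0$, which bounds $\ch_2$ from above only. This points back to the second route: since $F$ and $G$ have bounded first Chern classes and $\ch_2(F)+\ch_2(G)=\ch_2(e)$, an upper bound on $\ch_2(F)$ together with an upper bound on $\ch_2(G)$ gives two-sided bounds. Indeed $F\subset E$ with $E$ bounded gives $h^0(F(m))\le h^0(E(m))$. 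Combined with $\mu_{\max}(F)\le\mu(e)$, the Grothendieck-type estimate bounds $\chi(F(m))$ above, hence $\ch_2(F)$ above. Then $\ch_2(G)=\ch_2(e)-\ch_2(F)$ is bounded below. With all Chern data in a finite set, the Quot schemes $\Quot(\oo(-n)^N,g)$ for finitely many $g$ parametrize all the $G$, so the kernels $F$ form a bounded family.
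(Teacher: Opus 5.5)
Your argument is essentially the paper's. You bound $S_{r'}(E)\le r'c_1(e)\cdot H+r(e)r'nH^2$ using $r'$ general sections $\oo(-n)^{r'}\hookrightarrow E$. You then note that $G=E/F$ is a torsion-free quotient of the bounded family $M(e)$, hence of $\oo(-n)^{N}$, with $c_1(G)\cdot H$ bounded above, so Grothendieck's lemma \cite[Lem.~1.7.9]{HuyLeh10} applies. But the conclusion of that lemma is already that these quotients form a bounded family; in particular their Hilbert polynomials lie in a finite set. So the proof is finished at that point and nothing ``remains''. The kernels $F$ are then bounded via the relative Quot scheme over the family of $E$'s.

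You should drop the detour on $\ch_2(G)$, because as written it has a gap. The Hodge index inequality only bounds $c_1(G)^2$ from above. So a bound on $c_1(G)\cdot H$ does not give finitely many numerical classes of $c_1(G)$ when the Picard number is at least $2$: the component of $c_1(G)$ in $H^\perp$ is unconstrained. Bogomolov cannot supply the missing lower bound on $c_1(G)^2$ without the very lower bound on $\ch_2$ you are trying to prove. Likewise, passing from an upper bound on $\chi(F(m))$ to one on $\ch_2(F)$ needs control of $c_1(F)\cdot K_X$. That bound on $\chi(F(m))$ also requires $h^2(F(m))=0$, which you did not address.

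If you want to avoid quoting the lemma, work with Hilbert polynomials, which involve only the rank, $c_1\cdot H$, and $\chi$. Fix $m\gg 0$. Then $h^2(F(m))=h^2(G(m))=0$, since $\mu_{\min}(F)$ is bounded below (from $d(F)\ge C$ and $\mu_{\max}(F)\le\mu(e)$) and $\mu_{\min}(G)\ge\mu(e)$. Hence $\chi(F(m))\le h^0(E(m))$, which bounds $\chi(G)$ from below. The Le Potier--Simpson estimate on $h^0(G(m))$, using your bound on $\mu_{\max}(G)$, bounds $\chi(G)$ from above. Finitely many Quot schemes then parametrize all the $G$.
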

\begin{proof}There is a positive integer $n$ such that every $E\in M(e)$ can be realized as a quotient $\oo(-n)\otimes \bC^{\chi(E(n))}\twoheadrightarrow E$. Choosing $r'$ general elements of $\bC^{\chi(E(n))}$ yields
an injective map $\oo(-n)^{r^\prime}\to E$.
Then, $S_{r^\prime}(E)\leq (r^\prime c_1(E)+nr(E)H)\cdot H$ for all $E$, proving the first statement.
Notice that the quotients $E/F$ appearing in \eqref{eq:fam-sub-segre} are also quotients of $\oo(-n)^{\oplus \chi(E(n))}$ and have slopes bounded from above. Thus, the family of such quotients is bounded, according to a theorem of Grothendieck's (see e.g. \cite[Lem. 1.7.9]{HuyLeh10}). So the family \eqref{eq:fam-sub-segre} is bounded as well. 
\end{proof}

Recall that $\Delta(E)$ denotes the discriminant of $E$.
According to Lemma \ref{lem:bdd}, there are only finitely many possible classes of the subsheaves $F$. We can make the following definition.

\begin{defn}\label{defn:segre-refined}
Let
    \begin{align*}
        T_{r^\prime}(E)=\min\left\{\,\Delta(F)\,\middle|\,
        \begin{array}{l}
            F\subset E,\ r(F)=r^\prime,  \\
            (r^\prime c_1(E)-r(E)c_1(F))\cdot H=S_{r^\prime}(E)  
        \end{array}\,\right\}.
    \end{align*}This is the minimum discriminant among all rank-$r^\prime$ subsheaves of $E$ realizing the Segre invariant $S_{r^\prime}(E)$. Let 
    \begin{align*}
        Z(r^\prime, s,t)=\{\,E \in M(e)\mid S_{r^\prime}(\gr^{\mathrm{JH}}(E))=s,\ T_{r^\prime}(\gr^{\mathrm{JH}}(E))=t\,\}. 
    \end{align*}
\end{defn}

Note that $S_{r'}(E)$ and $T_{r'}(E)$ are both invariant under tensoring $E$ by a line bundle.

\begin{defn}\label{defn:segre-sub} We say that $F$ is a \emph{Segre subsheaf} of $E$ if there is an inclusion $F \hookrightarrow E$ and $F$ satisfies $(r(F)c_1(E)-r(E)c_1(F)) \cdot H = S_{r(F)}(E)$ and $\Delta(F) = T_{r(F)}(E)$. If the inclusion is clear from context, we may say that $F \subset E$ is a Segre subsheaf.
\end{defn}

The following is clear.
\begin{lem}
If $F \subset E$ is a Segre subsheaf, then $F$ is maximal among subsheaves of rank $r(F)$.
\end{lem}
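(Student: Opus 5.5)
The plan is to argue directly from the two minimality conditions in Definition \ref{defn:segre-sub}: first the Segre invariant, then the discriminant. Fix a Segre subsheaf $F \subset E$ of rank $r' = r(F)$, and suppose $F \subset F' \subset E$ with $r(F') = r'$. The goal is to show $F = F'$.

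Since the ranks agree, the quotient $T := F'/F$ is a torsion sheaf, and $c_1(F') = c_1(F) + c_1(T)$. The first Chern class of a torsion sheaf on a surface is the class of an effective divisor, namely its one-dimensional support counted with multiplicities. Because $H$ is ample, $c_1(T)\cdot H \ge 0$, with equality if and only if $T$ has zero-dimensional support. Hence
\begin{equation*}
(r' c_1(E) - r(E) c_1(F'))\cdot H = (r' c_1(E) - r(E)c_1(F))\cdot H - r(E)\, c_1(T)\cdot H = S_{r'}(E) - r(E)\, c_1(T)\cdot H.
\end{equation*}
By the definition \eqref{eq:segre} of $S_{r'}(E)$ as a minimum over rank-$r'$ subsheaves, the left side is at least $S_{r'}(E)$. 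This forces $c_1(T)\cdot H = 0$, so $T$ is supported in dimension $0$. It follows that $c_1(F') = c_1(F)$ and that $F'$ also realizes $S_{r'}(E)$.

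Now compare discriminants. For a zero-dimensional sheaf $T$ we have $\ch_2(T) = \length(T)$, so $\ch_2(F') = \ch_2(F) + \length(T)$ while $c_1$ and $r$ are unchanged. From $\Delta = (c_1^2 - 2r\ch_2)/(2r^2)$ we get
\begin{equation*}
\Delta(F') = \Delta(F) - \frac{\length(T)}{r'}.
\end{equation*}
Since $F'$ realizes $S_{r'}(E)$, it is one of the subsheaves over which $T_{r'}(E)$ is minimized, so $\Delta(F') \ge T_{r'}(E) = \Delta(F)$. Therefore $\length(T) = 0$, so $T = 0$ and $F = F'$, as required.

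There is no real obstacle here. The only point needing care is the positivity claim $c_1(T)\cdot H \ge 0$ for a torsion sheaf $T$, with equality exactly in the zero-dimensional case; this is standard from ampleness of $H$.
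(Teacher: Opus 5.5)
Your proof is correct and is essentially the paper's approach. The paper states the lemma as clear and gives no written proof; your argument is the routine check behind it: minimality of the Segre invariant forces $F'/F$ to be zero-dimensional, and then minimality of the discriminant forces it to vanish.
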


The converse of the lemma does not hold in general. For example, consider $E\cong I_p \oplus \oo$. The summand $I_p$, which is the ideal sheaf of a point, is maximal, but not a Segre subsheaf. 

It will be helpful to note the following observation.
\begin{lem}\label{lem:segre-sub-lf}
    If $E$ is locally free, any maximal subsheaf of fixed rank is locally free. In particular, its Segre subsheaves are locally free.
\end{lem}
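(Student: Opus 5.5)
The plan is to use the standard fact that a maximal subsheaf of fixed rank is saturated, and then show that saturated subsheaves of a locally free sheaf on a smooth surface are reflexive, hence locally free.

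First, let $F\subset E$ be maximal among subsheaves of rank $r(F)$, in the sense that no subsheaf $F'\subset E$ of the same rank properly contains it. Consider the saturation $F^{\mathrm{sat}}$, the kernel of $E\to (E/F)/\mathrm{tors}$. It contains $F$, has the same rank, and $F^{\mathrm{sat}}/F$ is the torsion of $E/F$. Maximality therefore forces $F=F^{\mathrm{sat}}$, so $E/F$ is torsion free. (If maximality is instead meant in terms of the invariants $c_1\cdot H$ and $\Delta$, the same conclusion holds. Replacing $F$ by $F^{\mathrm{sat}}$ adds the torsion $T$ of $E/F$. If $T$ has one-dimensional support, $c_1\cdot H$ strictly increases. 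If $T$ is zero-dimensional, $c_1$ is unchanged and $\ch_2$ increases by $\length T$, so $\Delta$ strictly decreases. Either way maximality is violated unless $T=0$.)

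Next, we show $F$ is reflexive. Since $E$ is locally free, $E$ is reflexive, and $F\hookrightarrow F^{**}\hookrightarrow E^{**}=E$ factors the inclusion. The quotient $F^{**}/F$ is supported in codimension $2$, and it injects into $E/F$, because $F^{**}\cap F' $ considerations reduce to $F^{**}/F\subset E/F$. As $E/F$ is torsion free and $F^{**}/F$ is torsion, $F^{**}/F=0$. So $F$ is reflexive. Alternatively, one can appeal to the standard lemma that a saturated subsheaf of a reflexive sheaf is reflexive.

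Finally, on a smooth surface reflexive sheaves are locally free, since their local rings are regular of dimension $\le 2$ and reflexive modules there have depth $\ge 2$, hence are free by Auslander--Buchsbaum. The second assertion then follows from the preceding lemma, which says that Segre subsheaves are maximal among subsheaves of their rank.

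There is no serious obstacle. The only point needing care is to fix which notion of maximality is used and to check that it implies saturation, which is the argument in the first step.
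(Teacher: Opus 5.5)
Your argument is correct: the paper states this lemma as an observation without proof, and your chain maximal $\Rightarrow$ saturated $\Rightarrow$ reflexive $\Rightarrow$ locally free (on a smooth surface) is exactly the standard reasoning it leaves implicit. Your check that both the inclusion notion of maximality (the one the paper uses, as its $I_p\oplus\oo$ example shows) and the invariant notion force saturation is also right. The one step to tidy is reflexivity: drop the stray phrase about $F^{**}\cap F'$ and note that $F^{**}\to E^{**}=E$ is injective, because its kernel is a rank-zero, hence torsion, subsheaf of the torsion-free sheaf $F^{**}$; then $F\subset F^{**}\subset E$ gives $F^{**}/F\subset E/F$ directly.
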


For each $0<r^\prime <r(E)$, 
\begin{align*}
    M(e)=\coprod_{s,t} Z(r^\prime,s,t). 
\end{align*}
Next, we equip $Z(r^\prime,s)$ and $Z(r^\prime,s,t)$ with
scheme structures.

Let $Q=\Quot(\oo(-m)^N,e)$ be the Quot scheme in a GIT construction of $M(e)$, such that $M(e)=\Quot^{\rm ss}(\oo(-m)^N,e)/\mkern-3mu/\GL(N,\mathbb{C})$. 
Let $\mathcal{E}\to Q^{\rm ss}\times X$ be the universal quotient, $\pi\colon Q^{\rm ss}\times X\to Q^{\rm ss}$ the projection, and $\varpi\colon Q^{\rm ss}\to M(e)$ the modular map. 
Considering Lemma \ref{lem:bdd}, let $s_0=\min \{\,S_{r^\prime}(E)\mid E\in M(e)\,\}$ and $t_0=\min \{\,T_{r^\prime}(E)\mid E\in M(e),\ S_{r^\prime}(E)=s_0\,\}$. We consider the relative Quot scheme 
$\Quot_{\pi}(r^\prime,s_0,t_0; \mathcal{E})$
parametrizing quotients whose kernels have rank $r^\prime$, first Chern class determined by $s_0$, and discriminant $t_0$.
Note that the image of the relative Quot scheme in $Q^{\rm ss}$ is stable under the $\GL(N,\mathbb{C})$-action.
Then, we can equip the subset $Z(r^\prime,s_0,t_0)$ with the structure of the closed subscheme $\varpi\circ\pi(\Quot_{\pi}(r^\prime,s_0,t_0; \mathcal{E}))$. Note that if $E$ admits a rank $r^\prime$ subsheaf giving the invariants $s_0$ and $t_0$, then its S-equivalence class lies in $Z(r^\prime,s_0,t_0)$.

We next consider $M_1=M(e)\setminus Z(r^\prime,s_0,t_0)$ and the pullback family $\mathcal{E}|_{\varpi^{-1}(M_1)\times X}$. Let $s_1=\min \{\,S_{r^\prime}(E)\mid E\in M_1\,\}$ and $t_1=\min \{\,T_{r^\prime}(E)\mid E\in M_1,\ S_{r^\prime}(E)=s_1\,\}$; note that it is possible that $s_1=s_0$.
Using the relative Quot scheme $\Quot_{\pi}(r^\prime,s_1,t_1; \mathcal{E}|_{\varpi^{-1}(M_1)\times X})$,
we can similarly equip $Z(r^\prime,s_1,t_1)\subset M_1$ with the structure of a closed subscheme. 
We can carry out this construction inductively, which will terminate after finitely many steps, according to Lemma \ref{lem:bdd}.
All the quotients at each step are torsion-free.

On the other hand, notice that
\begin{align}
    Z(r^\prime,s)=\coprod_t Z(r^\prime,s,t),
\end{align}
which is a finite union.

Thus, we have proven the following statement.
\begin{lem}\label{lem:segre-strata-loc-closed}
    Given $r^\prime$ and $M(e)$, the subsets $Z(r^\prime, s)$ and $Z(r^\prime,s,t)$ are locally closed.
\end{lem}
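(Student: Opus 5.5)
We prove Lemma \ref{lem:segre-strata-loc-closed} by formalizing the inductive construction just described: we exhibit a finite filtration of $M(e)$ by closed subsets, and show that each refined stratum $Z(r',s_i,t_i)$ is the difference of two consecutive members. For $Z(r',s)$ we then use $Z(r',s)=\coprod_t Z(r',s,t)$, a finite union by Lemma \ref{lem:bdd}, and check that it is again a difference of two closed sets.

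\textbf{Setup.} By Lemma \ref{lem:bdd}, only finitely many pairs $(s,t)$ occur. For each pair we fix the numerical data of a rank-$r'$ subsheaf $F$: the value $(r'c_1(e)-r(e)c_1(F))\cdot H$ together with $\Delta(F)$. Boundedness of the family \eqref{eq:fam-sub-segre} means that only finitely many classes $[F]$ occur, so each $\Quot_\pi(r',s,t;\mathcal{E})$ is a finite disjoint union of relative Quot schemes $\Quot_\pi(\mathcal{E},e-[F])$, each projective over the base.

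\textbf{Step 1: closed images.} The image in $Q^{\rm ss}$ of each such Quot scheme is closed and $\GL(N)$-invariant. Since $\varpi$ is a good quotient, it maps closed invariant sets to closed sets. Hence the image in $M(e)$ is closed. We order the pairs lexicographically by $(s,t)$, smallest first, and let $C_{\le(s,t)}\subset M(e)$ be the union of these images over all pairs $(s',t')\le(s,t)$. This set is closed.

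\textbf{Step 2: identifying the strata.} We claim
\[
Z(r',s,t)=C_{\le(s,t)}\setminus C_{<(s,t)}.
\]
The key input is minimality. If $E$ admits a rank-$r'$ subsheaf with invariants $(s',t')$, then $(S_{r'}(E),T_{r'}(E))\le(s',t')$ lexicographically, because $S$ is a minimum and $T$ is the minimum among subsheaves realizing $S$. Conversely, $E$ admits a subsheaf realizing its own pair $(S_{r'}(E),T_{r'}(E))$, and we may saturate it. Saturation can only lower $s$. If it kept $s$ fixed, then $c_1$ changes by an effective divisor of $H$-degree zero, hence $c_1$ is unchanged, and $\Delta$ drops because the torsion quotient is zero-dimensional.

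It follows that $E\in C_{\le(s,t)}$ exactly when $(S_{r'}(E),T_{r'}(E))\le(s,t)$, which gives the claimed equality. A difference of two closed sets is locally closed.

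\textbf{Main obstacle: S-equivalence.} Points of $M(e)$ are S-equivalence classes, and the strata are defined through $\gr^{\mathrm{JH}}(E)$. We must check two things:
\begin{itemize}
\item Membership in the image of the Quot scheme depends only on the closed orbit, i.e.\ the polystable point.
\item Subsheaves of $E$ and of $\gr^{\mathrm{JH}}(E)$ give the same minima.
\end{itemize}
For the first, we use that the image of a closed invariant set under a good quotient consists of the points whose closed orbit meets it, together with the closedness of the image. For the second, we degenerate $E$ to $\gr^{\mathrm{JH}}(E)$ in a one-parameter family inside the orbit closure. The relative Quot scheme is proper, so a subsheaf of $E$ specializes to a subsheaf of $\gr^{\mathrm{JH}}(E)$ with the same numerics. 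This gives the inequality $(S,T)(\gr^{\mathrm{JH}}(E))\le(S,T)(E)$, and this inequality is all that the closed-set description needs.
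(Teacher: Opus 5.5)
Your proposal is correct and is essentially the paper's argument. The paper removes the lexicographically minimal stratum, which is the closed image under $\varpi$ of a relative Quot scheme, and repeats on the open complement; your filtration $C_{\le(s,t)}$ is that same construction written non-inductively. Your explicit treatment of saturation, of S-equivalence via specialization to $\gr^{\mathrm{JH}}$, and of $Z(r',s)$ as a difference of two closed sets (rather than merely a finite union) fills in points the paper only asserts.

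One phrasing needs tightening. Lemma \ref{lem:bdd} only bounds the classes of saturated Segre subsheaves, so define $C_{\le(s,t)}$ using just those finitely many classes. Step 2 and the specialization argument only ever use subsheaves of such classes, so nothing else changes.
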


Since we will focus on rational surfaces and K3 surfaces, we recall the following important definitions: A bundle $F$ over a rational surface or K3 surface
is
\begin{itemize}
    \item \emph{rigid} if $\Ext^1(F,F)=0$;
    \item \emph{exceptional} if it is rigid, $\Hom(F,F)\cong \mathbb{C}$, and $\Ext^2(F,F)=0$;
    \item \emph{spherical} over a K3 surface if it is rigid and $\Hom(F,F)\cong\Ext^2(F,F)\cong \bC$.
\end{itemize} 
The following notation will be convenient for referring to specific refined Segre strata without specifying the values of the invariants.

\begin{defn}\label{defn:F-Segre-stratum}
For $F$ a rigid bundle, the refined Segre stratum in $M(e)$ of sheaves that admit $F$ as a Segre subsheaf is called the \emph{$F$-Segre stratum}. Similarly, for $D$ a divisor and $n$ a positive integer, the \emph{$I_{Z_n}(D)$-Segre stratum} in $M(e)$ is the refined Segre stratum where the Segre subsheaf is of the form $I_{Z_n}(D)$. 
\end{defn}

\begin{exmp}\label{ex:Segre-strata}
    Over $X=\bP^2$, consider the moduli space of sheaves with rank 2, first Chern class 1, and second Chern class $4$, which has dimension 12.
    The $\oo$-Segre stratum $Z(1,1,0)$ consists of sheaves that fit in the following  non-split short exact sequence
    \begin{align*}
    0\to\oo\to E \to I_{Z_4}(1)\to 0,
\end{align*}
where $Z_i\in X^{[i]}$, and has dimension 11. The $I_{Z_1}$-Segre stratum $Z(1,1,1)$ consists of sheaves $E$ that fit in the following non-split short exact sequence 
\begin{align*}
    0\to I_{Z_1}\to E \to I_{Z_3}(1)\to 0,
\end{align*}
and also has dimension 11.
The strata $Z(1,1,0)$ and $Z(1,1,1)$ are irreducible.
Moreover, we know the following:
\begin{itemize}
    \item $\overline{Z(1,1,1)}\cap Z(1,1,0)\not= \emptyset$; 
    \item $Z(1,1,0)\not\subsetneq \overline{Z(1,1,1)}$ and $Z(1,1,1)\not\subsetneq \overline{Z(1,1,0)}=Z(1,1,0)$;
    \item The stratum $Z(1,1)=Z(1,1,0)\coprod Z(1,1,1)$ is reducible.
\end{itemize}
\end{exmp}

\subsection{Expected dimensions of Segre strata}
We embed some Segre strata into degeneracy loci to understand the expected dimensions of the Segre strata.

Let $X$ be a rational surface or a K3 surface
and $H$ be an ample divisor. 
Let $F$ be a class $f$ bundle that is either exceptional over a rational surface or spherical over a K3 surface.
Let $\mathcal{E}$ be a flat family of semistable sheaves of class $e$ on $X$ parametrized by a finite-type scheme $S$. 
Given a point $s\in S$, let $E_s$ denote the fiber $\mathcal{E}|_{\{s\}\times X}$. 
Denote the projection $S\times X \to S$ as $\pi$ and the other projection as $\pi_X$. 
Take a resolution of $F$ of the form 
\begin{equation*}
    0\to K\to V=\oo(-m)^\lambda \to F\to 0, 
\end{equation*}
such that $H^i(E_s(m))=0$ for all $s\in S$ and all $i>0$.
Suppose 
\begin{equation}\label{eq:van-ext2}
    \mu(f)<\min\{\,\mu(e), \ \mu(e)-K_X \cdot H\,\}.
\end{equation}
Then $\Ext^2(F,E_s)\cong \Hom(E_s, F\otimes K_X)^\vee=0$ for all $s\in S$. 
Let $\lExt^i_{\pi}$ denote the $i$-th derived functor of $\lHom_{\pi}=\pi_* \circ\lHom$. 
Then we have a long exact sequence over $S$, which degenerates to the following form:
\begin{equation*}
    0\to \lHom_\pi(\pi_X^*F,\mathcal{E}) \to \lHom_\pi(\pi_X^*V,\mathcal{E}) \xrightarrow{\Phi} \lHom_\pi(\pi_X^*K,\mathcal{E}) \to \lExt_\pi^1(\pi_X^*F,\mathcal{E}) \to 0. 
\end{equation*}
Here, the middle two terms are locally free of ranks $\lambda \cdot \chi(e(m))$ and $\lambda \cdot \chi(e(m))-\chi(f, e)$, respectively.
For $j<\min\{\,\lambda \cdot \chi(e(m)),\ \lambda \cdot \chi(e(m))-\chi(f, e)\,\}$, define the degeneracy locus
\begin{equation*}
    D_j(\Phi)=\{\,s\in S\mid\Phi|_{\{s\}\times X}\colon \Hom(V,E_s)\to \Hom(K,E_s)\text{ has rank }\leq j\,\}. 
\end{equation*}
We define the $F$-Segre stratum in $S$ as the subset of points $s$ such that $E_s$ admits $F$ as a Segre subsheaf. This is similar to Definition \ref{defn:F-Segre-stratum}.
Then the following statement is clear, see e.g. \cite[p.242]{Ful98}.

\begin{prop}\label{prop:degeneracy-locus-codim}
Let $X$, $H$, $F$, and $S$ be as above. Assume $S$ is irreducible.
Suppose \eqref{eq:van-ext2} holds and $\chi(f,e)\leq 0$. 
Let $j_0=\lambda \cdot \chi(e(m))-1$. 
Then
\begin{equation*}
    D_{j_0}(\Phi)=\{\,s\in S \mid \text{$\exists$ a nonzero map } F\to E_s \,\},
\end{equation*}
it contains the $F$-Segre stratum in $S$, and every component has codimension at most $1-\chi(f,e)$.
\end{prop}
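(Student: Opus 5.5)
The plan is to reduce everything to a fibrewise statement about the map $\Phi$ between two locally free sheaves, and then quote the standard codimension bound for degeneracy loci \cite[p.242]{Ful98}.

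First I would justify that $\Phi$ is a map of vector bundles whose fibre at $s$ is exactly $\Hom(V,E_s)\to\Hom(K,E_s)$.
\begin{itemize}
\item Since $H^i(E_s(m))=0$ for $i>0$, we have $\Ext^i(V,E_s)=H^i(E_s(m))^{\oplus\lambda}=0$ for $i>0$. By cohomology and base change, $\lHom_\pi(\pi_X^*V,\mathcal{E})$ is locally free of rank $\lambda\chi(e(m))$ and commutes with base change.
\item Apply $\Hom(-,E_s)$ to $0\to K\to V\to F\to 0$. This gives $\Ext^1(K,E_s)\cong\Ext^2(F,E_s)$ and $\Ext^2(K,E_s)\cong\Ext^3(F,E_s)=0$.
\item The first group vanishes because \eqref{eq:van-ext2} and Serre duality give $\Ext^2(F,E_s)\cong\Hom(E_s,F\otimes K_X)^\vee=0$, as noted before the statement. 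Hence $\Ext^i(K,E_s)=0$ for all $i>0$.
\item So $\lHom_\pi(\pi_X^*K,\mathcal{E})$ is also locally free and commutes with base change. Its rank is $\chi(K,E_s)=\chi(V,E_s)-\chi(F,E_s)=\lambda\chi(e(m))-\chi(f,e)$.
\end{itemize}
This confirms the ranks asserted before the statement and shows that $D_j(\Phi)$ is computed fibrewise.

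Next I would identify $D_{j_0}(\Phi)$. On the fibre at $s$, the kernel of $\Hom(V,E_s)\to\Hom(K,E_s)$ is $\Hom(F,E_s)$, by left exactness of $\Hom(-,E_s)$. Set $a=\lambda\chi(e(m))$. The map has rank $\le j_0=a-1$ exactly when its kernel is nonzero, that is, when there is a nonzero map $F\to E_s$. I also need to check that $j_0$ is admissible, i.e. $j_0<\min\{a,\,a-\chi(f,e)\}$; this holds because $\chi(f,e)\le 0$ makes the target rank at least $a$. Containment of the $F$-Segre stratum is then immediate: if $F$ is a Segre subsheaf of $E_s$, the inclusion $F\hookrightarrow E_s$ is a nonzero map.

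Finally, for a map between bundles of ranks $a$ and $b$ on an irreducible base, every component of $D_j$ has codimension at most $(a-j)(b-j)$, since locally $D_j$ is cut out by a determinantal ideal \cite[p.242]{Ful98}. With $b=a-\chi(f,e)$ and $j=j_0=a-1$ this bound is $1\cdot(1-\chi(f,e))=1-\chi(f,e)$.

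The only step requiring care is the base-change and local-freeness verification, in particular that the vanishing of $\Ext^2(F,E_s)$ holds uniformly in $s$. This is exactly what \eqref{eq:van-ext2} provides; the remaining steps are formal.
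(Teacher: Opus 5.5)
Your proposal is correct and follows the same approach as the paper, which states the result is clear from the preceding setup together with the standard degeneracy-locus codimension bound in \cite[p.242]{Ful98}. You also make explicit the local freeness and base change of both terms of $\Phi$, the fibrewise identification of $\ker\Phi_s$ with $\Hom(F,E_s)$, the admissibility of $j_0$, and the computation $(a-j_0)(b-j_0)=1-\chi(f,e)$, and all of these steps are accurate.
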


\section{Rank-one subsheaves over $\bP^2$: classical aspects}\label{sec:p2-general}

We study rank-one subsheaves of semistable sheaves over the projective plane. This is the most accessible situation, due to the complete understanding of the existence of stable sheaves \cite{DreLeP85} and the cohomology of general sheaves \cite{GotHir98}.

\subsection{Reduced classes and the Dr\'ezet--Le Potier curve}

Let $K(\bP^2)$ denote the Grothendieck group of $\bP^2$.
We take $[\ch_0,\ch_1,\ch_2]$ as homogeneous coordinates on the projective plane $\bP(K(\bP^2) \otimes \bR)$, viewing the locus given by $\ch_0=0$ as the line at infinity. The complement of this locus is an affine plane, which we denote $\bU$, with coordinates given by the {\em reduced Chern characters}
\begin{align*}
    \chbar_i=\frac{\ch_i}{\ch_0}
    \mbox{ for }i=1,2.
\end{align*}
Note that $\ch_0=r$, $\chbar_1=\mu$, and
the discriminant satisfies
    $\Delta=\frac{1}{2}\chbar_1^2-\chbar_2$.
For a class $e \in K(\bP^2)$, we refer to the corresponding point in $\bU$ as a \emph{reduced class}.
For distinct reduced classes $e$ and $f$, we write $L_{ef}$ for the line through $e$ and $f$ and $l_{ef}$ for the line segment from $e$ to $f$. These lines still make sense if $e$ or $f$ lie on the line at infinity.

\begin{conv}\label{conv:non-reduced}
We will frequently specify points in $\bU$ or lines $L_{ef}$
using non-reduced classes $e$ and $f$ or even sheaves $E$ and $F$.
\end{conv}

For any real number $a$, we let $\Delta_a$ denote the set
\[
    \Delta_a = \left\{ P \in \bU \mid \Delta(P)=a \right\}.
\]
According to the classical Bogomolov inequality \cite{Bogomolov78VectorBundles}, the reduced classes of slope-semistable
torsion-free sheaves lie on or below $\Delta_0$. We now explain the precise description of the classes for which $M(e)$ is non-empty.

The classes of the rank-one exceptional sheaves $\oo(k)$ for $k \in \mathbb{Z}$ lie on $\Delta_0$. All other exceptional classes lie strictly between $\Delta_0$ and $\Delta_{1/2}$. Given an exceptional class $\alpha$, consider three associated points in $\bU$ defined as follows. Let $\alpha^+ = \alpha-(0,0,1/\ch_0(\alpha))$. Let $\alpha^l$ denote the point in the intersection of  $L_{\alpha(-3)^+ \alpha^+}$ and $\Delta_{1/2}$ that has greater $\chbar_1$. Let $\alpha^r$ denote the point in the intersection of $L_{\alpha^+ \alpha(3)^+}$ and $\Delta_{1/2}$ that has smaller $\chbar_1$. Note that $L_{\alpha(-3)^+ \alpha^+}$ and $L_{\alpha^+ \alpha(3)^+}$ are the lines given by the linear equations $\chi(\alpha,-)=0$ and $\chi(-,\alpha)=0$.
There is a fractal curve $\DLP$, called the \emph{Dr\'{e}zet--Le Potier curve}, lying between $\Delta_{1/2}$ and $\Delta_1$ and containing the line segments $l_{\alpha^l \alpha^+}$ and $l_{\alpha^+ \alpha^r}$ for all exceptional characters $\alpha$. See Figure \ref{fig:s,q}, which shows the segments corresponding to exceptional bundles of rank 1 and 2. The following theorem highlights the key role played by this curve.

\begin{thm}[Dr\'{e}zet--Le Potier \text{\cite{DreLeP85}}]\label{thm:lpcurve} Given $e \in K(\bP^2)$ with $\ch_0(e)>0$, there exists a semistable coherent sheaf with class $e$ if and only if $e$ is proportional to an exceptional class or $e$ is on or below $\DLP$. In the latter case, $M(e)$ is irreducible of dimension $1-\chi(e,e)$, which is positive, and the general sheaf in $M(e)$ is slope-stable.
\end{thm}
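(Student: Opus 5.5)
This is the classical theorem of Drézet and Le Potier, cited from \cite{DreLeP85} rather than proved in this paper. Our plan follows their original strategy, which has four parts. First we treat exceptional bundles. Then we prove necessity of the inequality using exceptional bundles as test objects. Next we prove existence below $\DLP$ by a construction. Finally we obtain irreducibility and dimension from deformation theory of prioritary sheaves.

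\textbf{Exceptional bundles.} We first establish that exceptional bundles on $\bP^2$ exist for exactly the exceptional slopes. These slopes are generated from the integers by the dyadic mutation operation on pairs $(\alpha,\beta)$. We prove this using mutations of exceptional pairs together with the fact that an exceptional bundle is determined by its slope. Rigidity gives $\chi(\alpha,\alpha)=1$, which pins down the discriminant of $\alpha$. For classes proportional to $\alpha$, the only semistable sheaves are the direct sums $\alpha^{\oplus n}$, so these classes are handled.

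\textbf{Necessity.} Suppose $E$ is semistable with class $e$ and $e$ is not exceptional. For each exceptional bundle $A$ whose slope is near $\mu(e)$, stability and Serre duality give vanishings. If $\mu(A)<\mu(e)$, then $\Hom(E,A)=0$, and $\Ext^2(A,E)=\Hom(E,A(-3))^\vee=0$, so $\chi(A,E)\le \hom(A,E)$. The aim is to show $\Hom(A,E)=0$ in the relevant slope window; then $\chi(A,E)\le0$. The symmetric argument with $\mu(A)>\mu(e)$ gives $\chi(E,A)\le 0$.

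The inequalities $\chi(A,e)\le 0$ and $\chi(e,A)\le0$ are exactly the half-planes bounded by the lines $L_{\alpha(-3)^+\alpha^+}$ and $L_{\alpha^+\alpha(3)^+}$. Their union over all $\alpha$ carves out the region on or below $\DLP$.

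The delicate point is proving $\Hom(A,E)=0$ when $\mu(A)$ is only slightly below $\mu(e)$. This needs the fact that the exceptional slope intervals $(\alpha^l,\alpha^r)$ cover $\bR$ up to a Cantor-type set. It also needs a separate argument at the non-exceptional limit points, by continuity of the bound.

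\textbf{Existence, irreducibility and dimension.} For classes strictly below $\DLP$, we construct semistable sheaves as cokernels of general maps between direct sums of exceptional bundles from a suitable exceptional triple $(A,B,C)$. These are generalized Beilinson or Gaeta-type resolutions. Semistability is checked by testing against subsheaves of the resolved form.

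For irreducibility and dimension we use the irreducibility of the stack of prioritary sheaves, as in \cite{Wal98} and Hirschowitz–Laszlo. Every semistable sheaf is prioritary, and $\Ext^2(E,E)=\Hom(E,E(-3))^\vee=0$. Hence $M(e)$ is smooth at stable points, of dimension $\ext^1(E,E)=1-\chi(e,e)$. Positivity of this number for non-exceptional classes follows from the Riemann–Roch expression $\chi(e,e)=r^2(1-2\Delta)$ together with $\Delta\ge\delta(\mu)>1/2$ below $\DLP$. That the general sheaf is slope-stable follows from a dimension count: strictly slope-semistable sheaves are built from classes of smaller rank, and those loci have smaller dimension.

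\textbf{Main obstacle.} We expect the main obstacle to be the existence half near the curve. There, semistability of the general cokernel is hard to verify and needs an induction on the height of exceptional bundles.
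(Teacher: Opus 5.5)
The paper gives no proof of this theorem. It quotes it from \cite{DreLeP85}, citing \cite[Prop.~6.9]{Mar78} for the dimension and \cite[Thm.~4.11]{DreLeP85} for slope-stability. Measured against the classical argument, your necessity step has a genuine error: you attach the vanishings to the wrong Euler pairing. When $\mu(A)<\mu(E)<\mu(A)+3$, you correctly note $\Hom(E,A)=0$, and Serre duality gives $\Ext^2(E,A)\cong\Hom(A,E(-3))^\vee=0$. Hence $\chi(E,A)=-\ext^1(E,A)\le 0$ immediately. This is exactly the condition of lying on or below $\chi(-,\alpha)=0$, i.e.\ $L_{\alpha^+\alpha(3)^+}$, the line carrying the piece $l_{\alpha^+\alpha^r}$ of $\DLP$ to the right of $\alpha$. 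Symmetrically, $\mu(E)<\mu(A)<\mu(E)+3$ gives $\hom(A,E)=0=\ext^2(A,E)$, so $\chi(A,E)\le 0$, which is the piece $l_{\alpha^l\alpha^+}$.

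What you set out to prove instead is false: $\Hom(A,E)=0$, and hence $\chi(A,E)\le 0$, when $\mu(A)<\mu(E)$. Take $A=\oo$ and $e$ of rank $3$ with $c_1=1$, $c_2=3$. Then $\mu(e)=1/3$ lies in the interval $(\alpha^l,\alpha^r)$ for $\alpha=[\oo]$. The class $e$ lies strictly below $\DLP$, since $\chi(e,[\oo])=-1$. Yet $\chi(\oo,e)=2$ and $h^2(E)=0$, so every $E\in M(e)$ has $h^0(E)\ge 2$.

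So the ``delicate point'' you single out does not exist. The case that genuinely needs a separate argument, $\mu(E)=\mu(A)$, is missing. There slope-stability gives nothing, and one uses Gieseker stability. If $\Delta(E)<1-\Delta(A)$, then $\chi(A,E)=\chi(E,A)>0$ while both $\Ext^2$ groups vanish. This gives nonzero maps $A\to E$ and $E\to A$, which forces $\Delta(E)=\Delta(A)$ and hence $E\cong A^{\oplus n}$. This case produces the vertex $\alpha^+$ of the curve, since $\Delta(\alpha^+)=1-\Delta(\alpha)$. All these vanishings also require the exceptional bundles to be stable, not merely to exist.

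On the existence side, you state your construction only for classes strictly below $\DLP$. The theorem also asserts existence for non-exceptional classes on $\DLP$. Moreover, semistability of the generic cokernel is the real content of Dr\'ezet--Le Potier, and you leave it as the acknowledged obstacle, so this part is an outline rather than a proof.

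Your irreducibility and dimension argument is sound. The semistable locus is open in the irreducible stack of prioritary sheaves, $\Ext^2(E,E)=0$, and $M(e)$ is smooth at stable points. This is a different, later route than the Maruyama reference the paper relies on. However, it yields $\dim M(e)=1-\chi(e,e)$ only once the stable locus is known to be non-empty. That depends on the slope-stability statement, which you also only sketch.
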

For the dimension, see \cite[Prop. 6.9]{Mar78}. For the statement on slope-stability, see \cite[Thm. 4.11]{DreLeP85}.

\begin{figure}[h]
\centering
\begin{tikzpicture}
\begin{axis}[
    xmin=-3.6, xmax=3.6,
    ymin=-2, ymax=6,
    axis lines=middle,
    grid=both,
    axis equal,
    xtick={-2,0,2},
    ytick={0,2,4},
    major grid style={line width=0.3pt, draw=gray!20},
    axis line style={draw=gray!80, line width=0.6pt, ->},
    xlabel={$\chbar_1$},
    ylabel={$\chbar_2$},
    width=9cm, height=9cm,
    tick label style={font=\footnotesize, fill=white},
    every axis x label/.style={at={(current axis.right of origin)}, anchor=west},
    every axis y label/.style={at={(current axis.above origin)}, anchor=south}
]

    \addplot[domain=-4:4, samples=100, color=gray!60, line width=0.6pt] {x^2/2} node[pos=.15, anchor=west, color=gray] {\footnotesize $\Delta_0$};
    \addplot[domain=-4:4, samples=100, color=gray!60, line width=0.6pt] {x^2/2-1/2} node[pos=.8, anchor=west, color=gray] {\footnotesize $\Delta_{1/2}$};

    \addplot[only marks, mark=*, mark options={fill=green!50!black, draw=none}, mark size=0.8pt] coordinates {
        (-4, 8) (-3, 4.5) (-2, 2) (-1, 0.5) (0, 0) (1, 0.5) (2, 2) (3, 4.5) (4, 8)
        (-3.5,5.75) (-2.5,2.75) (-1.5,0.75) (-0.5,-0.25) (0.5,-0.25) (1.5,0.75) (2.5,2.75) (3.5,5.75)
    };

    \addplot[color=purple!80!blue, line width=0.8pt, no marks] coordinates {

        (-3.382, 5.218) (-3, 3.5) (-2.618, 2.927)

        (-2.382, 2.337) (-2, 1) (-1.618, 0.809) 

        (-1.382, 0.455) (-1, -0.5) (-0.618, -0.309) 

        (-0.382, -0.427) (0, -1) (0.382, -0.427) 

        (0.618, -0.309) (1, -0.5) (1.382, 0.455)

        (1.618, 0.809) (2, 1) (2.382, 2.337)

        (2.618, 2.927) (3, 3.5) (3.382, 5.218)

        (-3.586, 5.929) (-3.5, 5.5) (-3.414, 5.328)

        (-2.586, 2.843) (-2.5, 2.5)  (-2.414, 2.414)

        (-1.586, 0.757) (-1.5, 0.5) (-1.414, 0.5)

        (-0.586, -0.328) (-0.5, -0.5) (-0.414, -0.414)

        (0.414, -0.414) (0.5, -0.5) (0.586, -0.328)

        (1.414, 0.5) (1.5, 0.5) (1.586, 0.757)

        (2.414, 2.414) (2.5, 2.5) (2.586, 2.843)

        (3.414, 5.328) (3.5, 5.5) (3.586, 5.929)

    };

    \node[circle, fill=red, draw=none, line width=.2pt, inner sep=.8pt, label={above:\color{red}\footnotesize $\alpha$}] at (axis cs:1, 0.5) {};
    \node[circle, fill=orange, draw=orange, line width=.2pt, inner sep=.8pt, label={below:\color{orange}\footnotesize $\alpha^+$}] at (axis cs:1, -0.5) {};
    \node[circle, fill=orange, draw=orange, line width=.2pt, inner sep=.8pt, label={above:\color{orange}\footnotesize $\alpha^l$}] at (axis cs:0.618, -0.309) {};
    \node[circle, fill=orange, draw=orange, line width=.2pt, inner sep=0.8pt, label={ right:\color{orange}\footnotesize $\alpha^r$}] at (axis cs:1.382, 0.455) {};

\end{axis}
\end{tikzpicture}
\caption{The Dr\'ezet--Le Potier curve $\DLP$ on the plane $\mathbb{U}$
}
\label{fig:s,q}
\end{figure}

For $e \in K(\bP^2)$ satisfying $\ch_0(e)=0$, there exists a coherent sheaf with class $e$ if and only if
\begin{enumerate}
    \item $\ch_1(e) > 0$; or
    \item $\ch_1(e)=0$ and $\ch_2(e)>0$.
\end{enumerate}
For classes in Case (1), we set $\chbar_1(e)=\infty$ and view these classes as lying on a vertical line at infinity on the right side of $\bU$, with vertical coordinate given by $\ch_2(e)/\ch_1(e)$. In Case (1), the moduli space $M(e)$ is also irreducible of dimension $1-\chi(e,e)$ \cite[Thm. 1.1]{LeP93b}.

From the definition of $\DLP$, we obtain a result that is useful for ensuring there exist stable sheaves $G$ arising as cokernels in \eqref{eq:ext-small-strata}.

\begin{lem}\label{lem:stable-classes} Suppose $e$ is on or below $\DLP$ and assume $\alpha$ is an exceptional class
such that $r(\alpha) \le r(e)$, $\mu(\alpha) < \mu(e)$, and the slope of $L_{\alpha e}$ is $\le \mu(e)-3/2$. Then $e-\alpha$ is on or below $\DLP$. 
\end{lem}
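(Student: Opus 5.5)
Since $e-\alpha$ is obtained from $e$ by subtracting $\alpha$, its reduced class lies on the line $L_{\alpha e}$, on the far side of $e$ from $\alpha$ (when $r(\alpha)<r(e)$), or at infinity on that line (when $r(\alpha)=r(e)$). Indeed,
\[
\chbar(e-\alpha)=\frac{r(e)\chbar(e)-r(\alpha)\chbar(\alpha)}{r(e)-r(\alpha)} ,
\]
which is an affine combination of $e$ and $\alpha$ with weight $>1$ on $e$. So I will move from $e$ along the ray of $L_{\alpha e}$ pointing away from $\alpha$, in the direction of increasing $\chbar_1$ because $\mu(\alpha)<\mu(e)$. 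The task is to show that this ray stays on or below $\DLP$ once it starts on or below it. In the boundary case $r(\alpha)=r(e)$, the class $e-\alpha$ has rank $0$ and $\ch_1=r(e)(\mu(e)-\mu(\alpha))>0$, so it is Case (1), where a sheaf exists. I will treat this case separately.

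The key geometric input is a slope bound on $\DLP$. The curve lies above $\Delta_{1/2}$ and below $\Delta_1$, and it is built from the segments $l_{\beta^l\beta^+}$ and $l_{\beta^+\beta^r}$ over exceptional $\beta$, together with the points of $\Delta_{1/2}$ in between. I claim that the slope of any chord of $\DLP$, or of any tangent or supporting line, at a point of reduced slope $x$ is at least $x-3/2$. To check this I will look at the pieces. The right segment $l_{\beta^+\beta^r}$ lies on the line $\chi(-,\beta)=0$. Riemann--Roch gives $\chi(f,\beta)=r(f)r(\beta)\bigl(P(\mu(\beta)-\mu(f))-\Delta(f)-\Delta(\beta)\bigr)$ with $P(x)=x^2/2+3x/2+1$. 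Differentiating at $f=\beta^+$ shows the line has slope $\mu(\beta)-3/2$ in $(\chbar_1,\chbar_2)$-coordinates. Moreover, $\chbar_1\ge\mu(\beta)$ along that segment, since $\beta^+$ has slope $\mu(\beta)$. The left segment has slope $\mu(\beta)+3/2$, which is larger still. Portions of $\DLP$ lying on $\Delta_{1/2}$ have slope $x$. Hence, as a graph $\chbar_2=\delta(\chbar_1)$, the curve $\DLP$ satisfies $\delta'(x)\ge x-3/2$ wherever it is differentiable. This is a one-sided derivative estimate, and it passes to the fractal limit points by a continuity and density argument over the exceptional slopes.

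Given this bound, the conclusion follows by comparison. Let $\ell(x)$ be the line $L_{\alpha e}$, with slope $s\le\mu(e)-3/2$, and let $\delta$ describe $\DLP$. Then $g(x)=\ell(x)-\delta(x)$ satisfies $g(\mu(e))\le 0$ because $e$ is on or below $\DLP$. For $x\ge\mu(e)$,
\[
g'(x)\le s-(x-3/2)\le \mu(e)-x\le 0 .
\]
So $g\le0$ for all $x\ge\mu(e)$, and in particular at $\chbar_1(e-\alpha)$. This shows $e-\alpha$ is on or below $\DLP$. For $r(\alpha)=r(e)$ the class has rank zero, and there is nothing further to check beyond the existence statement for Case (1).

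The main obstacle is making the derivative bound on $\DLP$ rigorous: the curve is fractal, and one must make sure that no point with slope exactly at an exceptional value creates a kink that violates the inequality. I would first argue using the explicit description of the segments above and the fact that the exceptional slopes are dense, then reduce to a Lipschitz/monotonicity statement for $\delta(x)-(x-3/2)^2/2$. The hypothesis $r(\alpha)\le r(e)$ is used only to ensure that $e-\alpha$ has nonnegative rank and lies on the correct side of $e$.
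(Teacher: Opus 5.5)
Your comparison strategy is sound and differs from the paper's, but as written the check of the key bound on $\DLP$ is wrong, and the step you flag as the main obstacle is left open. The segments are swapped. The right segment $l_{\beta^+\beta^r}$ lies on $\chi(-,\beta)=0$. Your own Riemann--Roch expression gives $\chbar_2=(\mu(\beta)+\tfrac32)\chbar_1+\mathrm{const}$ there, so its slope is $\mu(\beta)+3/2$. The left segment $l_{\beta^l\beta^+}$ lies on $\chi(\beta,-)=0$ and has slope $\mu(\beta)-3/2$.

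With your stated values, the right segment would have slope $\mu(\beta)-3/2\le x-3/2$ for $x\ge\mu(\beta)$. That is the opposite of what you need, so your verification does not go through. With the correct values it does:
\begin{itemize}
\item On the left segment, $x\le\mu(\beta)$, so $\mu(\beta)-3/2\ge x-3/2$. Equality holds exactly at $\beta^+$, which is where the hypothesis on the slope of $L_{\alpha e}$ is sharp.
\item On the right segment, $x<\chbar_1(\beta^r)<\mu(\beta)+3$, so $\mu(\beta)+3/2>x-3/2$.
\end{itemize}
Also, in the $(\chbar_1,\chbar_2)$-plane, $\DLP$ lies below $\Delta_{1/2}$ and above $\Delta_1$, not the reverse.

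For the fractal part, an almost-everywhere derivative bound plus density is not enough on its own, since such bounds need not integrate (think of a Cantor function). No limit argument is needed. Set $h(x)=\delta(x)-(x-3/2)^2/2$.
\begin{itemize}
\item Off the open intervals $(\chbar_1(\beta^l),\chbar_1(\beta^r))$ the curve is on $\Delta_{1/2}$, so $h(x)=\tfrac32x-\tfrac{13}{8}$ there.
\item On each closed interval, $h$ is nondecreasing by the corrected slopes.
\item These intervals are pairwise disjoint (Dr\'ezet--Le Potier), their endpoints lie on $\Delta_{1/2}$, and $h$ is continuous.
\end{itemize}
Chaining inequalities through interval endpoints gives $h(x)\le h(y)$ for $x<y$. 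Then $\ell(x)-\delta(x)=\bigl(\ell(x)-(x-3/2)^2/2\bigr)-h(x)$ is nonincreasing on $[\mu(e),\infty)$, which is exactly the comparison you wanted.

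Compared with the paper: the paper argues by contradiction at a single exceptional $\beta$ whose two segments lie below $e-\alpha$. It observes that $L_{\alpha e}$ must then be steeper than $L_{\beta^l\beta^+}$, whose slope $\mu(\beta)-3/2$ exceeds $\mu(e)-3/2$ because $e$ is left of $\beta$. That is shorter. However, it tacitly requires choosing $\beta$ so that $e$ lies left of $\beta$ and on or below $L_{\beta^l\beta^+}$, and the paper does not spell this out.

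Your global monotonicity of $h$ needs no choice of $\beta$. It also shows that exceptionality of $\alpha$ plays no role: only $0<r(\alpha)\le r(e)$, $\mu(\alpha)<\mu(e)$, and the slope bound enter. Your separate treatment of the rank-zero case $r(\alpha)=r(e)$ is a reasonable reading of the statement, which the paper leaves implicit.
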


\begin{proof}

For every exceptional class $\beta$, the line $L_{\beta^l \beta^+}$ can be described as $\chi(\beta,-)=0$ and has slope $\mu(\beta)-3/2$. 
Assume for contradiction that $e-\alpha$ is above $\DLP$. Then there is an exceptional class $\beta$ such that $e-\alpha$ lies above $l_{\beta^l \beta^+} \cup l_{\beta^+ \beta^r}$, while $e$ and $\alpha$ are left of $\beta$ and lie on or below $L_{\beta^l \beta^+}$, as shown in Figure~\ref{fig:stable-classes}.
But then the slope of $L_{\alpha e}$ is greater than the slope of $L_{\beta^l \beta^+}$, which contradicts the assumption that the slope of the former is $\le \mu(e)-3/2$.

\begin{figure}[h]
\centering
\begin{tikzpicture}
\pgfmathsetmacro{\mB}{1}
\pgfmathsetmacro{\sA}{-0.5}
\pgfmathsetmacro{\qA}{-0.25}
\pgfmathsetmacro{\sE}{0.7}
\pgfmathsetmacro{\qE}{-0.35}
\pgfmathsetmacro{\rA}{2}\pgfmathsetmacro{\rE}{10}
\pgfmathsetmacro{\qB}{\mB*\mB/2}
\pgfmathsetmacro{\qBp}{\qB-1}
\pgfmathsetmacro{\tt}{(3-sqrt(5))/2}
\pgfmathsetmacro{\sBl}{\mB-\tt}  \pgfmathsetmacro{\qBl}{\sBl*\sBl/2-0.5}
\pgfmathsetmacro{\sBr}{\mB+\tt}  \pgfmathsetmacro{\qBr}{\sBr*\sBr/2-0.5}
\pgfmathsetmacro{\sG}{(\rE*\sE-\rA*\sA)/(\rE-\rA)}
\pgfmathsetmacro{\qG}{(\rE*\qE-\rA*\qA)/(\rE-\rA)}
\pgfmathsetmacro{\ms}{(\qE-\qA)/(\sE-\sA)}
\begin{axis}[
    xmin=-1.6, xmax=1.9,
    ymin=-0.7, ymax=0.95,
    axis lines=middle,
    xtick=\empty, ytick=\empty,
    axis line style={draw=gray!80, line width=0.6pt, ->},
    xlabel={$\chbar_1$}, ylabel={$\chbar_2$},
    width=12cm, height=6cm,
    clip=false,
    every axis x label/.style={at={(current axis.right of origin)}, anchor=west},
    every axis y label/.style={at={(current axis.above origin)}, anchor=south}
]
    \addplot[domain=-1.414:1.414, samples=100, gray!70, line width=0.6pt] {x^2/2}
        node[pos=0.98, anchor=south east, gray, font=\footnotesize] {$\Delta_0$};
    \addplot[domain=-1.6:1.732, samples=100, gray!70, line width=0.6pt] {x^2/2-0.5}
        node[pos=0.02, anchor=north west, gray, font=\footnotesize] {$\Delta_{1/2}$};

    \addplot[domain=-1.6:1.9, samples=2, black, dashed, line width=0.7pt]
        {\qE+\ms*(x-\sE)};

    \draw[black, line width=1.1pt] (axis cs:\sBl,\qBl) -- (axis cs:\mB,\qBp);
    \draw[black, line width=1.1pt] (axis cs:\mB,\qBp) -- (axis cs:\sBr,\qBr);

    \node[circle, fill=black, inner sep=1.2pt, label={above:$\beta$}]
        at (axis cs:\mB,\qB) {};
    \node[circle, fill=black, inner sep=1.2pt, label={right:$\beta^r$}]
        at (axis cs:\sBr,\qBr) {};
    \node[circle, fill=black, inner sep=1.2pt, label={below right:$\beta^+$}]
        at (axis cs:\mB,\qBp) {};
    \node[circle, fill=black, inner sep=1.2pt, label={above left:$\beta^l$}]
        at (axis cs:\sBl,\qBl) {};
    \node[circle, fill=black, inner sep=1.2pt, label={above:$\alpha$}]
        at (axis cs:\sA,\qA) {};
    \node[circle, fill=black, inner sep=1.2pt, label={[label distance=-3pt]below left:$e$}]
        at (axis cs:\sE,\qE) {};
    \node[circle, fill=black, inner sep=1.2pt, label={[label distance=-3pt]above left:$g$}]
        at (axis cs:\sG,\qG) {};

    \node[anchor=north, font=\footnotesize]
        at (axis cs:{(\sBl+\mB)/2},{(\qBl+\qBp)/2 - 0.04}) {$l_{\beta^l \beta^+}$};
    \node[anchor=west, font=\footnotesize]
        at (axis cs:{\mB+0.22},{(\qBp+\qBr)/2}) {$l_{\beta^+ \beta^r}$};
    \node[anchor=south east, font=\footnotesize]
        at (axis cs:-1.15,{\qE+\ms*(-1.15-\sE)+0.03}) {$L_{\alpha e}$};
\end{axis}
\end{tikzpicture}
\caption{
}
\label{fig:stable-classes}
\end{figure}

\end{proof}

We extend the notion of $\oo(k)$-Segre strata (see Definition \ref{defn:F-Segre-stratum}) to moduli of sheaves of small ranks with the following definition.

\begin{defn}\label{defn:O(k)-segre-stratum-small-rks}
When $r(e)=0$ or $1$, the $\oo(k)$-\emph{Segre stratum} in $M(e)$ is the set of S-equivalence classes of semistable sheaves $E$ such that $\gr^{\mathrm{JH}}(E)$ admits a nonzero map from $\oo(k)$ but no nonzero map from $\oo(k+1)$. \end{defn}

Note that the stratum just defined is locally closed. Moreover, for $E$ in the $\oo(k)$-Segre stratum in the case $r(e)=1$, the nonzero map $\oo(k) \to E$ must be an inclusion, so we still call $
\oo(k)$ a \emph{Segre subsheaf} of $E$ and consider the \emph{Segre invariant} of $E$ to be $c_1(e)-k$.

\subsection{General rank-1 Segre invariant}

In this subsection, we assume $e \in K(\bP^2)$ has positive rank and that $M(e)$ is of positive dimension. We use a theorem of G\"{o}ttsche and Hirschowitz to identify the general Segre invariant. We then prove Theorem \ref{thm:O(k)-Segre-strata} for the general Segre stratum and explain some implications for general stable sheaves.

For any integer $k$, note that
\begin{equation}\label{eq:chi}
    \chi(e(-k))
    = \chi(e)+\frac{r(e)(k^2-3k)}{2}-c_1(e) k.
\end{equation}
As the right side of \eqref{eq:chi} makes sense for any $k \in \bR$, we sometimes apply the formula for real-valued $k$.
The solutions of the quadratic equation $\chi(e(-k))=0$ are $k_{\pm}=\mu(e)+\frac{3}{2} \pm \sqrt{1/4+2\Delta(e)}$.
Let
\begin{align}\label{eq:ke}
    k_e = \lceil k_- \rceil - 1,
\end{align}
namely $k_e$ is the largest integer less than $k_-$.

\begin{lem}\label{lem:k_e}
The integer $k_e$ defined by \eqref{eq:ke} is the maximal integer $k < \mu(e)$ satisfying $\chi(e(-k)) > 0$. Equivalently, $k_e$ is the unique integer $k < \mu(e)$ such that $\chi(e(-k))>0$ and $\chi(e(-k-1)) \le 0$.
\end{lem}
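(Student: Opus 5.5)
The plan is to treat $\chi(e(-k))$ as a quadratic polynomial in $k$, using \eqref{eq:chi}, and read everything off from the position of its roots relative to $\mu(e)$. The leading coefficient is $r(e)/2>0$, so $\chi(e(-k))<0$ exactly for $k$ strictly between the roots $k_\pm=\mu(e)+\frac32\pm\sqrt{1/4+2\Delta(e)}$, and $\chi(e(-k))>0$ for $k<k_-$.

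The first step is to show that $k_-<\mu(e)$. This is equivalent to $\sqrt{1/4+2\Delta(e)}>3/2$, that is, $\Delta(e)>1$. Since $M(e)$ has positive dimension, Theorem \ref{thm:lpcurve} says $e$ lies on or below $\DLP$ and is not proportional to an exceptional class. I expect this step to be the only delicate one, because points on $\DLP$ can have $\Delta<1$ (for example $\alpha^+$ for $\alpha=\oo$ has $\Delta=1$, and other points may have smaller discriminant). So it is better to avoid the discriminant and argue directly with $\chi$ at integers.

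Write $n=\lceil \mu(e)\rceil-1$, the largest integer $<\mu(e)$. Then we have $n<\mu(e)\le n+1$, and $e(-n)$ has slope in $(0,1]$. The key claim is $\chi(e(-n-1))\le 0$ when $n+1<\mu(e)+3/2$, i.e. whenever $n+1$ lies between the roots or equals one of them. More concretely, for $k\in\mathbb Z$ with $\mu(e)-1\le k<\mu(e)$, a stable sheaf $E$ of class $e$ has $\hom(\oo(k+1),E)$ and $\ext^2(\oo(k+1),E)=\hom(E,\oo(k-2))^\vee$. The latter vanishes by stability since $k-2<\mu(e)$. The former is generically $0$ when $\chi\le 0$, by Göttsche–Hirschowitz weak Brill–Noether \cite{GotHir98}. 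This suggests the cleaner route: rather than locating roots, apply \cite{GotHir98}, which says a general $E\in M(e)$ has at most one nonzero cohomology group after twisting.

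With that in hand, the proof is elementary:
\begin{enumerate}[(i)]
\item There exist integers $k<\mu(e)$ with $\chi(e(-k))>0$, since $\chi(e(-k))\to+\infty$ as $k\to-\infty$. Hence the set $\{k<\mu(e)\mid\chi(e(-k))>0\}$ is nonempty and bounded above, so its maximum $k'$ exists.
\item For $k<k_-$, $\chi>0$. So if $k_-\ge\mu(e)$, every integer $k<\mu(e)$ would satisfy $\chi(e(-k))>0$, in particular $k=n$. Here I would need $\chi(e(-n))\le 0$ or another contradiction. So I would instead show directly that the largest integer below $k_-$ is $<\mu(e)$. That is, I would show some integer $k<\mu(e)$ has $\chi(e(-k))\le0$: take a general stable $E$ and $k=n$. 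Then $h^2(E(-n))=\hom(E,\oo(n-3))=0$ by stability. If $h^0(E(-n))$ were positive, giving a map $\oo(n)\to E$ with $n\ge\mu(e)-1$, then \cite{GotHir98} forces $\chi\ge h^0>0$. This only gives the wrong direction, so this is the main obstacle. My fallback is the Dr\'ezet–Le Potier inequality $\Delta(e)\ge\delta(\mu(e))$ with $\delta\ge1/2$, combined with integrality of $\chi$, to show $\chi(e(-n))\le 0$ or $\chi(e(-n-1))\le0$ with $n+1<\mu(e)$ when $\mu(e)\notin\mathbb Z$. The case $\mu(e)\in\mathbb Z$, with $n=\mu(e)-1$, needs a separate check via $\chi(e(-\mu))=r(1-\Delta)\cdot$const.
\end{enumerate}

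Once $k_-<\mu(e)$ is known, the integers $k\le\lceil k_-\rceil-1=k_e$ satisfy $\chi>0$. The integers with $k_e<k<\mu(e)$ lie in $[k_-,\mu(e))\subset[k_-,k_+]$, so $\chi\le0$ there. Hence $k_e$ is the maximal $k<\mu(e)$ with $\chi(e(-k))>0$. Since $k_e+1\in[k_-,k_+]$, we get $\chi(e(-k_e-1))\le0$, and uniqueness follows because any other candidate $k<k_e$ has $\chi(e(-k-1))>0$.
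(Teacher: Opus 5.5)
There is a genuine gap: the proposal never establishes the one nontrivial fact, $k_e<\mu(e)$.

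Your announced first step, $k_-<\mu(e)$ (equivalently $\Delta(e)>1$), is false for some positive-dimensional moduli spaces. Take $r=2$, $c_1=1$, $c_2=2$. Then $\chi(e(-k))=k^2-4k+2$, $\Delta(e)=7/8$ and $\dim M(e)=4$, so $k_-=2-\sqrt2>1/2=\mu(e)$. Yet $k_e=0<\mu(e)$, and the lemma holds. Your closing paragraph (``once $k_-<\mu(e)$ is known'') therefore rests on a statement that cannot be proved. It only needs $k_e<\mu(e)$, and with that substitution it goes through essentially unchanged.

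The intermediate attempts do not supply $k_e<\mu(e)$ either. The G\"ottsche--Hirschowitz argument runs in the wrong direction, as you observed. The Dr\'ezet--Le Potier fallback is not carried out, and its condition ``$n+1<\mu(e)$'' contradicts $\mu(e)\le n+1$.

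The missing observation is that no Brill--Noether input is needed. You already noted that $\ext^2(\oo(n+1),E)=\hom(E,\oo(n-2))=0$ by semistability. But $\hom(\oo(n+1),E)=0$ also holds by semistability alone, because $n+1=\lceil\mu(e)\rceil\ge\mu(e)$. When $\mu(e)\in\bZ$, compare reduced Hilbert polynomials, using $\Delta(e)>0$, which follows from $\dim M(e)>0$. Hence $\chi(e(-n-1))=-\ext^1(\oo(n+1),E)\le 0$, which places $n+1$ in $[k_-,k_+]$. In particular $k_-\le n+1$, so $k_e\le n<\mu(e)$.

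The paper uses the same idea in contrapositive form at $k_e$ itself. Since $\chi(e(-k_e))>0$ and $\ext^2(\oo(k_e),E)=\hom(E,\oo(k_e-3))=0$, we get $\hom(\oo(k_e),E)>0$, so $k_e<\mu(E)$ by semistability. Both formulations of the lemma then follow from $\mu(e)<k_+$ and $k_e+1<k_+$.
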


\begin{proof} First, we prove $k_e<\mu(e)$. Every $E \in M(e)$
satisfies $\ext^2(\oo(k_e),E)=\hom(E,\oo(k_e-3))=0$ by semistability.
Since $\chi(\oo(k_e),E)=\chi(e(-k_e))>0$, we must have $\hom(\oo(k_e),E)>0$, which implies $k_e < \mu(E)$ by semistability.
Then the first statement about $k_e$ in the lemma follows from $\mu(e) < k_+$ and the second statement follows from $k_e+1 < k_+$.
\end{proof}

\begin{lem}\label{lem:gen-segre-p2}
    Let $e$ be a class such that $r(e)\geq 1$ and $M(e)$ is of positive dimension. Then, the general Segre invariant of rank $1$ is $c_1(e)-r(e)k_e$ and corresponds to the subsheaf $\oo(k_e)$.
\end{lem}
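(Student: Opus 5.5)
We must show two things: a general $E\in M(e)$ admits $\oo(k_e)$ as a subsheaf, and it admits no rank-one subsheaf $F$ with $c_1(F)>k_e$. Together these give Segre invariant $c_1(e)-r(e)k_e$. The Segre subsheaf then has $c_1=k_e$ and minimal discriminant. A rank-one subsheaf of $c_1=k_e$ has the form $I_Z(k_e)$, and its discriminant is minimized exactly when $Z=\emptyset$, which gives $\oo(k_e)$.

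Existence is the argument already used in the proof of Lemma~\ref{lem:k_e}. For every semistable $E$ of class $e$ we have
\[
\ext^2(\oo(k_e),E)=\hom(E,\oo(k_e-3))=0 .
\]
Since $k_e<\mu(e)$, this gives $\hom(\oo(k_e),E)\ge\chi(e(-k_e))>0$. A nonzero map $\oo(k_e)\to E$ is injective because $E$ is torsion free when $r(e)\ge1$. So every $E\in M(e)$, and also every $\gr^{\mathrm{JH}}(E)$, contains $\oo(k_e)$, and $S_1(E)\le c_1(e)-r(e)k_e$ everywhere.

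For the reverse inequality on a general $E$, first reduce to line bundles. Any rank-one subsheaf $F\subset E$ has the form $I_Z(j)$, and $I_Z(j)\subset \oo(j)$. Since $E$ is a general stable sheaf it can be taken locally free when $\Delta$ is large enough. In the remaining cases, the Segre subsheaf can be taken saturated with torsion-free quotient. In either case, a rank-one subsheaf of degree $j$ yields a nonzero map $\oo(j)\to E$, after passing to $E^{\vee\vee}$ if necessary and using $\hom(\oo(j),E)\le \hom(\oo(j),E^{\vee\vee})$. I will handle the non-locally-free case through the reflexive hull.

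It therefore suffices to show $H^0(E(-j))=0$ for all $j\ge k_e+1$, and by monotonicity it suffices to show $H^0(E(-k_e-1))=0$. Here I invoke G\"ottsche--Hirschowitz \cite{GotHir98}: a general sheaf in $M(e)$, and all its twists, has at most one nonzero cohomology group. For $E(-k_e-1)$ we have
\[
h^2=\hom(E,\oo(k_e-2))=0 ,
\]
by stability, because $k_e+1\le \lceil \mu(e)\rceil$ gives $k_e-2<\mu(e)$. Lemma~\ref{lem:k_e} gives $\chi(e(-k_e-1))\le 0$. Weak Brill--Noether then gives $h^0(E(-k_e-1))=\max(\chi,0)=0$.

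Openness of this vanishing and irreducibility of $M(e)$ (Theorem~\ref{thm:lpcurve}, or $M(e)$ consisting of a point plus its twist when $e$ is exceptional, which is excluded by positive dimension) then show that the general stratum corresponds to $\oo(k_e)$.

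The main obstacle is the reflexive-hull step when a general $E$ is not locally free. This happens for small $\Delta$ near the Dr\'ezet--Le Potier curve. There, $E^{\vee\vee}$ is not a general sheaf of its class, and weak Brill--Noether does not apply to it directly. I would instead argue directly with $\Hom(I_Z(j),E)$: a saturated subsheaf $I_Z(j)$ with $j>k_e$ has torsion-free quotient. A nonzero map $I_Z(j)\to E$ with $Z$ nonempty could then be handled by a dimension count of extensions $0\to I_Z(j)\to E\to G\to 0$, showing that such $E$ form a proper closed subset.

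Alternatively, I would use G\"ottsche--Hirschowitz in its form stated for $\Hom(\oo(j),E)$ on the stack of prioritary sheaves. Combined with semicontinuity of the rank-one Segre invariant \cite[Lem. 2.6]{RL-TL-Zam21}, this would show that the value $c_1(e)-r(e)k_e$ is attained on a nonempty open set.
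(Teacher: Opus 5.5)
Your strategy is the paper's. Every semistable $E$ contains $\oo(k_e)$ by the $\ext^2$/$\chi$ argument of Lemma \ref{lem:k_e}. Göttsche--Hirschowitz applied to a twist of a general $E$ then rules out $\oo(j)\to E$ for $j>k_e$. The paper phrases this second half as: GH forces $\chi(E(-k))>0$ for the Segre subsheaf $\oo(k)$, hence $k\le k_e$. That is equivalent to your vanishing $h^0(E(-k_e-1))=0$.

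However, as written the proposal has a gap exactly where you locate the ``main obstacle'': passing from an arbitrary rank-one subsheaf $I_Z(j)\subset E$ to a nonzero map $\oo(j)\to E$. The inequality $\hom(\oo(j),E)\le\hom(\oo(j),E^{\vee\vee})$ is useless here, since vanishing for general $E$ says nothing about $E^{\vee\vee}$. Neither workaround is carried out. Alternative (b) does not even avoid the issue: semicontinuity only reduces you to exhibiting one semistable $E$ with $S_1(E)=c_1(e)-r(e)k_e$. That again requires either controlling $I_Z(j)$-subsheaves or having a locally free member with the GH vanishing.

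The missing fact is that the obstacle never occurs. For $r(e)\ge 2$ the general member of $M(e)$ is locally free, with no restriction on $\Delta$; your claim that non-locally-free general members occur near $\DLP$ is false. To see this, suppose $E^{\vee\vee}/E$ has length $\ell>0$. Then $E$ is determined by two pieces of data.
\begin{itemize}
\item The $\mu$-semistable bundle $E_0=E^{\vee\vee}$, of class $e_0$ with $\chi(e_0,e_0)=\chi(e,e)+2r\ell$. Its deformations are unobstructed because $\Ext^2(E_0,E_0)=\Hom(E_0,E_0(-3))^\vee=0$.
\item A point of $\Quot(E_0,\ell)$, which has dimension $(r+1)\ell$.
\end{itemize}
So such $E$ form a locus of dimension at most $-\chi(e_0,e_0)+(r+1)\ell=-\chi(e,e)-(r-1)\ell$. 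This sits inside the smooth stack of semistable sheaves of class $e$, which has dimension $-\chi(e,e)$, so the locus has codimension at least $(r-1)\ell>0$. This is the same count the paper uses in the proof of Proposition \ref{prop:rk-1-subsh-p2}; the paper's proof simply asserts that a general $E$ is locally free.

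Once $E$ is locally free, its Segre subsheaves are line bundles by Lemma \ref{lem:segre-sub-lf}. Your reduction to $H^0(E(-k_e-1))=0$ is then valid and the argument closes as in the paper. For $r(e)=1$ nothing needs fixing, because by Definition \ref{defn:O(k)-segre-stratum-small-rks} the stratum is defined directly via maps from $\oo(k)$.
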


\begin{proof}
     Suppose $E\in M(e)$ is a general sheaf and $F\subset E$ is a rank-$1$ Segre subsheaf. If $r(e)=1$, then $F$ is locally free by definition. If $r(e) \ge 2$, then since $E$ is general, it is locally free and so is $F$. Let $k$ be the integer such that $F\cong \oo(k)$.
     Then,
     the main theorem of \cite{GotHir98} implies that $\chi(E(-k))>0$.
    On the other hand, $E(-k_e)$ is also general and $\chi(E(-k_e))>0$ implies that $h^0(E(-k_e))=\chi(E(-k_e))>0$, according to the same theorem \cite{GotHir98}.
     Therefore, $k=k_e$, $\oo(k_e)$ is a Segre subsheaf of $E$, and the Segre invariant is $c_1(e)-r(e)k_e$. 
\end{proof}

Note that for general $E \in M(e)$, we have $\hom(\oo(k_e+m),E)=0$ for all $m>0$ by Lemma \ref{lem:gen-segre-p2}, and $\ext^2(\oo(k_e+m),E)=\hom(E,\oo(k_e+m-3))=0$ for $m \le 3$ by stability. Thus
\begin{equation}\label{eq:chi-non-pos}
    \chi([\oo(k_e+m)],e) \le 0 \qquad \text{for $0<m \le 3$}.
\end{equation}

A coherent sheaf $E$ over $\bP^2$ is called \emph{prioritary} if $\Ext^2(E,E(-1))=0$ \cite{hirschowitz1993fibres}. 
\begin{lem}
\label{lem:prior-ext}
Suppose $G$ is a coherent sheaf and $E$ is obtained as an extension \eqref{eq:ext-small-strata}. 
Then
\begin{enumerate}[(a)]
    \item If $G$ is prioritary and $\hom(G,\oo(k-2))=0$, then $\hom(G,E(-2))=0$.
    \item If also $\hom(\oo(k+2),G)=0$, then $E$ is prioritary.
\end{enumerate}
\end{lem}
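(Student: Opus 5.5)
Both parts reduce to vanishing of $\Hom$ groups via Serre duality on $\bP^2$, where $K_{\bP^2}=\oo(-3)$. For any coherent sheaf $A$ we have $\Ext^2(A,A(-1))\cong \Hom(A,A(-2))^\vee$. So $A$ is prioritary exactly when $\hom(A,A(-2))=0$. In particular the hypothesis on $G$ gives $\hom(G,G(-2))=0$, and for (b) we must show $\hom(E,E(-2))=0$. The plan is to feed this into the long exact sequences coming from \eqref{eq:ext-small-strata}, twisted by $\oo(-2)$.

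For (a), twist \eqref{eq:ext-small-strata} by $\oo(-2)$ to get $0\to\oo(k-2)\to E(-2)\to G(-2)\to 0$. Applying $\Hom(G,-)$ gives the left exact sequence
\[
0\to \Hom(G,\oo(k-2))\to \Hom(G,E(-2))\to \Hom(G,G(-2)).
\]
The first term vanishes by hypothesis and the last vanishes because $G$ is prioritary. Hence $\Hom(G,E(-2))=0$.

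For (b), apply $\Hom(-,E(-2))$ to \eqref{eq:ext-small-strata}. This gives
\[
0\to \Hom(G,E(-2))\to \Hom(E,E(-2))\to \Hom(\oo(k),E(-2)).
\]
The first term vanishes by (a), so it suffices to show $\Hom(\oo(k),E(-2))=H^0(E(-k-2))=0$. Twisting \eqref{eq:ext-small-strata} by $\oo(-k-2)$ and taking cohomology gives
\[
0\to H^0(\oo(-2))\to H^0(E(-k-2))\to H^0(G(-k-2)).
\]
Here $H^0(\oo(-2))=0$, and $H^0(G(-k-2))=\Hom(\oo(k+2),G)=0$ by the extra hypothesis. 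Therefore $H^0(E(-k-2))=0$, so $\hom(E,E(-2))=0$, and by Serre duality $\Ext^2(E,E(-1))=0$, i.e.\ $E$ is prioritary.

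There is no real obstacle here. The only point needing care is the Serre duality identification $\Ext^2(A,A(-1))\cong\Hom(A,A(-2))^\vee$ for possibly non-locally-free $A$. This holds for arbitrary coherent sheaves on the smooth projective surface $\bP^2$.
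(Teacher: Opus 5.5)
Your proof is correct and follows the argument the paper has in mind. The paper omits this proof, citing the prioritary argument in Proposition \ref{prop:bn}, which likewise rewrites $\Ext^2(E,E(-1))$ as $\Hom(E,E(-2))^\vee$ and kills both ends of the sequence obtained by applying $\Hom(-,E(-2))$ to the extension. Your part (a) supplies the one step that differs: in Proposition \ref{prop:bn}, $\Hom(G,E(K_X+\alpha))$ vanished simply because $G$ was torsion.
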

We omit the proof, since it's similar to the corresponding argument in the proof of Proposition \ref{prop:bn}.

We now complete Step 1 of the proof of Theorem \ref{thm:O(k)-Segre-strata} for the case $k=k_e$. As irreducibility and nestedness follow immediately in this case, this completes the proof of the theorem for $k=k_e$.

\begin{proof}[Proof of Theorem \ref{thm:O(k)-Segre-strata}, the case $k=k_e$]
Let $g = e - [\oo(k_e)]$. In the case $r(e)=1$, the result follows from the fact that a general $E \in M(e)$ is a twist of an ideal sheaf of general points, hence the cokernel $G$ of a general nonzero map $\oo(k_e) \to E$ is a general line bundle supported on a smooth curve, which is stable.

Now assume $r(e)\geq 2$. Note that $M(e)$ of positive dimension implies $\chi(e,[\oo(k_e)]) \le 0$ by \eqref{eq:chi-non-pos} and Serre duality.
Then $\chi(g,[\oo(k_e)])=\chi(e,[\oo(k_e)])-1 <0$.
Thus, a general $G \in M(g)$ is torsion-free and satisfies $\ext^1(G,\oo(k_e))=-\chi(G,\oo(k_e))$ by \cite{GotHir98}. Thus the $\Ext^1(G,\oo(k_e))$-family over an open subset of $M(g)$ is a vector bundle.
On the other hand, notice that $\chi(\oo(k_e),G)=\chi(E(-k_e))-1 \ge 0$ and thus $\Ext^1(\oo(k_e),G)=0$ for general $G$. According to Lemma \ref{lem:complete}, the family of general extensions \eqref{eq:ext-small-strata} provides a complete family of sheaves $E$. 
By stability, $\Hom(G,\oo(k_e-2))=0$. By \eqref{eq:chi-non-pos}, $\chi([\oo(k_e+2)],g)=\chi([\oo(k_e+2)],e) \leq 0$.
Thus, for general $G$ of class $g$, $\Hom(\oo(k_e+2),G)=0$.
By Lemma \ref{lem:prior-ext}, $E$ is prioritary.
Therefore, a general extension \eqref{eq:ext-small-strata} provides a stable $E$.  
\end{proof}

Let $E\in M(e)$ be general, assuming $r(e)\ge 1$. We now show that $E$ can be expressed as an extension of a general ideal sheaf by a direct sum of line bundles. In the case $r(e)=1$, $E$ is already an ideal sheaf. If $r(e) \ge 2$, by Theorem \ref{thm:O(k)-Segre-strata}, $\oo(k_e)\subset E$ is a Segre subsheaf and the quotient $G_1:=E/\oo(k_e)$ is general in $M(g_1)$, where $g_1=e-[\oo(k_e)]$. 
Moreover, we have the exact sequence
\begin{align*}
    0\to \Hom(\oo(k_{g_1}), \oo(k_e)) \to \Hom(\oo(k_{g_1}),E)\to \Hom(\oo(k_{g_1}), G_1) \to 0. 
\end{align*}
Therefore, an inclusion $\oo(k_{g_1})\hookrightarrow G_1$ can be lifted to an inclusion $\oo(k_{g_1})\hookrightarrow E$ that does not factor through $\oo(k_e)$. Thus, we have a subsheaf $\oo(k_{g_1}) \oplus \oo(k_e) \subset E$. 
Similarly, $\oo(k_{g_1})$ is a Segre subsheaf of $G_1$ and the quotient $G_2:=G_1/\oo(k_{g_1})$ is general in $M(g_2)$, where $g_2=g_1-[\oo(k_{g_1})]$.
Continuing in this way, we can obtain a sequence of integers
\begin{equation}\label{eq:int-seq}
    k_e\geq k_{g_1}\geq \dots \geq k_{g_{r(e)-2}},
\end{equation}
depending only on $e$, and a subsheaf
\begin{equation}\label{eq:int-seq-sub}
\oo(k_{g_{r(e)-2}}) \oplus \cdots \oplus \oo(k_{g_1}) \oplus \oo(k_e) \subset E
\end{equation}
such that the quotient is rank 1 and torsion-free, hence is a twist of an ideal sheaf $I_Z$ for $Z$ a general 0-dimensional subscheme. 

In fact, we see in the proof of the next result that the first $\chi(e(-k_e))$ of the integers in \eqref{eq:int-seq} are equal to $k_e$ and all remaining integers are equal to $k_e-1$. The case $r(e)=1$ of the result, describing resolutions of general ideal sheaves, was known to Gaeta \cite{Gae51} (see also \cite{Hui16-interpolation}).

\begin{cor}\label{cor:gen-res} A general $E \in M(e)$ admits a resolution
\[
    0 \to \oo(k_e-2)^{a_2} \to \oo(k_e-1)^{a_1} \oplus \oo(k_e)^{a_0} \to E \to 0
\]
or
\[
    0 \to \oo(k_e-2)^{a_2} \oplus \oo(k_e-1)^{-a_1} \to \oo(k_e)^{a_0} \to E \to 0,
\]
depending on whether $a_1 \ge 0$ or $a_1 \le 0$. Here $a_0=\chi(e(-k_e))$, $a_1=\chi(e(-k_e+1))-3a_0$, and $a_2=a_0+a_1-r(e)$.
\end{cor}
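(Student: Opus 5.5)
The plan is to use the filtration \eqref{eq:int-seq-sub}, the Gaeta resolution of the rank-one quotient, and a horseshoe-type splicing of resolutions. First I will pin down the sequence \eqref{eq:int-seq}. Set $g_0=e$ and $g_{i+1}=g_i-[\oo(k_{g_i})]$. If $k_{g_i}=k_e$, then
\[
\chi(g_{i+1}(-k_e))=\chi(g_i(-k_e))-1 .
\]
By Lemma \ref{lem:k_e}, $k_{g_{i+1}}$ stays equal to $k_e$ as long as this value is positive and $k_e<\mu(g_{i+1})$. Otherwise $k_{g_{i+1}}$ drops, and I claim it drops to exactly $k_e-1$. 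For this I use that $\chi(g_i(-k_e+1))$ is large. By Lemma \ref{lem:k_e}, $\chi(e(-k_e+1))>0$ (this is implicit in $k_e-1<k_-$). Each step subtracts $\chi(\oo(k_{g_i}-k_e+1))$, which is $3$ or $1$, and $\chi(\oo(-1+\cdot))$-type terms are controlled by \eqref{eq:chi-non-pos}. The counting should show that the first $a_0=\chi(e(-k_e))$ integers equal $k_e$ and all later ones equal $k_e-1$. The same count applies at the last stage to the twisted ideal sheaf, recovering Gaeta's numerics. This requires checking that $\mu(g_i)$ stays above $k_e$, or $k_e-1$, along the way, which follows from \eqref{eq:chi} since $\chi(g_i(-k))>0$ forces $k<\mu$ via the semistability argument in Lemma \ref{lem:k_e}.

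Next, general $E$ has the subsheaf $A=\oo(k_e)^{b_0}\oplus\oo(k_e-1)^{b_1}$ with $b_0+b_1=r(e)-1$. The quotient is $I_Z(c)$ for a general $Z$. Gaeta's theorem (the case $r=1$) gives a minimal resolution of $I_Z(c)$ by line bundles of degrees $k_e,k_e-1,k_e-2$, since its $k$-invariant is $k_e$ or $k_e-1$ by the first step. Because $\Ext^1(\oo(j),A)=0$ for all $j$, the resolution lifts (horseshoe lemma). This gives a resolution of $E$ whose terms are sums of $\oo(k_e-2),\oo(k_e-1),\oo(k_e)$, with $\oo(k_e-2)$ appearing only in the kernel position. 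The kernel is locally free, being a second syzygy on a surface, and is a sum of line bundles by construction.

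Finally, I cancel repeated $\oo(k_e-1)$ summands appearing in both terms. For general $E$, the map $\oo(k_e-1)\to\oo(k_e-1)$ component can be taken to be an isomorphism by minimality, which holds since $\hom(\oo(k_e-1),E)$ equals the expected value by Göttsche--Hirschowitz. So $\oo(k_e-1)$ appears on only one side. The exponents then follow from Euler characteristics: twisting by $-k_e$ and taking $\chi$ gives $a_0=\chi(e(-k_e))$, since $\chi(\oo(-1))=\chi(\oo(-2))=0$. Twisting by $-k_e+1$ gives $\chi(e(-k_e+1))=3a_0+a_1$, where $a_1$ is signed according to its side. The rank then gives $a_2=a_0+a_1-r(e)$.

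The main obstacle is this cancellation/minimality step: showing that the general $E$ has no $\oo(k_e-1)$ on both sides, and that the $\oo(k_e-2)$ terms occur only in the kernel. I would first try to argue this directly via the vanishings $h^0(E(-k_e+1))=\chi$ and $h^1(E(-k_e+j))=0$ for general $E$, computing the Beilinson-type resolution instead.
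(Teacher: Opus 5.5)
Your outline follows the paper's route. You use the filtration \eqref{eq:int-seq-sub} with a general twisted ideal sheaf $I$ as quotient, take Gaeta's resolution of $I$, splice using $\Ext^1(\oo(j),A)=0$, and read off the exponents from Euler characteristics. As written, though, the argument is not closed at the step you call the main obstacle, and your proposed fix for it does not work. For any line-bundle resolution $0\to P_2\to P_1\to E\to 0$, vanishing of $H^1$ of line bundles gives
$\hom(\oo(k_e-1),E)=\hom(\oo(k_e-1),P_1)-\hom(\oo(k_e-1),P_2)$.
This number is unchanged by adding or cancelling a pair of $\oo(k_e-1)$ summands. So knowing its value from G\"ottsche--Hirschowitz says nothing about whether the $\oo(k_e-1)\to\oo(k_e-1)$ component is nonzero.

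The resolution of this is that no cancellation is ever needed, once the counting you only sketch in your first step is made explicit. Twist so that $k_e=0$. Then $a_0+a_1=\chi(e(1))-2\chi(e)=r(e)-\chi(e(-1))\ge r(e)$. This inequality is exactly what guarantees that \eqref{eq:int-seq} consists of $\min(a_0,r(e)-1)$ zeros followed by $-1$'s. Now split into cases, as the paper does.

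If $a_0\ge r(e)$, then $A=\oo^{r(e)-1}$ and the quotient has $\chi(I)\ge 1$ and $\chi(I(-1))=\chi(e(-1))\le 0$, so $k_I=0$. Gaeta's resolution of $I$ is already of one of the two stated shapes, and splicing in $A$ gives the result.

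If $a_0<r(e)$, then $A=\oo^{a_0}\oplus\oo(-1)^{r(e)-1-a_0}$ and $\chi(I)=0$, so $k_I=-1$. The $\oo(k_I-2)$-exponent in Gaeta's resolution equals $-\chi(I(-k_I-1))=-\chi(I)=0$. Hence that resolution is $0\to\oo(-2)^{x}\to\oo(-1)^{x+1}\to I\to 0$, and $\oo(-1)$ appears only in the middle term of the spliced resolution.

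This same computation is also what justifies your claim that the quotient's resolution uses only degrees $k_e,k_e-1,k_e-2$. Knowing $k_I\in\{k_e,k_e-1\}$ alone would still allow an $\oo(k_e-3)$ term.
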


\begin{proof} Write $\ch(e)=(r,c,d)$. We may assume after twisting that $k_e=0$. Then $c>0$,
\[
    a_0 = \chi(e)=r+\tfrac{3}{2}c+d>0, \qquad \text{and} \qquad \chi(e(-1))=\tfrac{1}{2}c+d \le 0.
\]
Note that $\chi(e(1))=3r+\tfrac{5}{2}c+d$, so $a_1=-2c-2d$.

\emph{Case 1:} Suppose $a_0 \ge r$. Then the integers in \eqref{eq:int-seq} are all 0, so $E$ admits a short exact sequence $0 \to \oo^{r-1} \to E \to I_Z(c) \to 0$ in which $Z$ is general. Then $\chi(I_Z(c))>0$, so the Gaeta resolution for $I_Z(c)$ involves the line bundles $\oo(-2),\oo(-1),\oo$. There are two cases for the Gaeta resolution, depending on whether $a_1 \ge 0$ or $a_1 \le 0$, and combining this resolution with the inclusion $\oo^{r-1} \hookrightarrow E$ gives the result.

\emph{Case 2:} Suppose $a_0 < r$. Note that $a_0+a_1=r-\tfrac{1}{2}c-d \ge r$, so the first $a_0$ integers in the sequence \eqref{eq:int-seq} are 0 and the remaining integers are all $-1$. Then $E$ admits a short exact sequence $0 \to \oo(-1)^{r-1-a_0} \oplus \oo^{a_0} \to E \to I_Z(c') \to 0$, where $I_Z(c')$ has vanishing cohomology and thus the length of $Z$ equals $\chi(\oo(c'))$. It follows that the Gaeta resolution for $I_Z(c')$ involves only the line bundles $\oo(-2)$ and $\oo(-1)$, giving the result.
\end{proof}

Existence of these resolutions in certain cases was proven in \cite{CosHuiWoo17} using the generalized Beilinson spectral sequence  (e.g. see \cite[Prop. 5.10]{BerGolJoh23} for a precise statement in the case $a_1 \ge 0$). Corollary \ref{cor:gen-res} expands on this range of cases.

The subsheaf \eqref{eq:int-seq-sub} yields an upper bound for the general Segre invariant of any rank $r'$.

\begin{cor}\label{cor:gen-Segre-bound}
The general Segre invariant of rank $r^\prime$ is not greater than $r^\prime c_1(e)-r(e)r'k_e$ if $r' \le \chi(e(-k_e))$ or $r' c_1(e)-r(e)(r'(k_e-1)+\chi(e(-k_e)))$ if $r' > \chi(e(-k_e))$.
\end{cor}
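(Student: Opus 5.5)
The bound comes from exhibiting, for a general $E\in M(e)$, one explicit rank-$r'$ subsheaf. Since the Segre invariant $S_{r'}(E)$ is a minimum over all rank-$r'$ subsheaves, any single subsheaf $F\subset E$ of rank $r'$ gives
\[
S_{r'}(E)\le r'c_1(e)-r(e)c_1(F).
\]
The general Segre invariant is the value of $S_{r'}$ on the open stratum, which is attained by a general $E$. So it suffices to produce, for general $E$, a rank-$r'$ subsheaf whose $c_1$ equals $r'k_e$ in the first case and $r'(k_e-1)+\chi(e(-k_e))$ in the second. Throughout I take $0<r'<r(e)$, as in the definition \eqref{eq:segre}.

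The candidate is a partial direct sum of the subsheaf \eqref{eq:int-seq-sub}, built just before Corollary \ref{cor:gen-res} for general $E$ with $r(e)\ge 2$. There are $r(e)-1$ summands, and $r'\le r(e)-1$, so a direct sum of any $r'$ of them is a rank-$r'$ subsheaf of $E$. I choose the $r'$ summands of largest degree, namely $\oo(k_e),\oo(k_{g_1}),\dots,\oo(k_{g_{r'-1}})$.

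To compute $c_1$ of this sum I use the claim stated before Corollary \ref{cor:gen-res}: the first $\chi(e(-k_e))$ integers of the sequence \eqref{eq:int-seq} equal $k_e$ and the rest equal $k_e-1$. This was verified inside the proof of Corollary \ref{cor:gen-res}, after twisting so that $k_e=0$.
\begin{itemize}
\item In Case 1 of that proof ($a_0\ge r$), every entry of the sequence is $k_e$.
\item In Case 2 ($a_0<r$), exactly $a_0=\chi(e(-k_e))$ entries equal $k_e$ and the rest equal $k_e-1$. This relies on $\chi(g(-k_e))$ dropping by one at each step and on $a_0+a_1\ge r$, which ensures that the twist by $k_e-1$ stays positive for all remaining steps.
\end{itemize}

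It follows that if $r'\le \chi(e(-k_e))$, the chosen summands all have degree $k_e$, so $c_1=r'k_e$. If $r'>\chi(e(-k_e))$, then $\chi(e(-k_e))$ of them have degree $k_e$ and the remaining $r'-\chi(e(-k_e))$ have degree $k_e-1$, so
\[
c_1=\chi(e(-k_e))\,k_e+\bigl(r'-\chi(e(-k_e))\bigr)(k_e-1)=r'(k_e-1)+\chi(e(-k_e)).
\]
Substituting these values into the inequality above gives the two stated bounds; the ample class is $H$, a line, so intersecting with $H$ only rescales.

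The main point requiring care is the justification that the inductive construction runs for the required number of steps. Each quotient $G_i$ must again be general in $M(g_i)$ with $M(g_i)$ of positive dimension, so that Theorem \ref{thm:O(k)-Segre-strata} for $k=k_{g_i}$ and the lifting through the surjection $\Hom(\oo(k_{g_i}),E)\to\Hom(\oo(k_{g_i}),G_i)$ both apply. I would take this from the discussion preceding Corollary \ref{cor:gen-res}, together with the numerical identification of the $k_{g_i}$ in its proof, rather than reprove it.
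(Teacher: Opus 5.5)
Your proposal is correct and is exactly the paper's argument: the paper deduces the corollary in one line from the subsheaf \eqref{eq:int-seq-sub} of a general $E$, using the observation (verified in the proof of Corollary \ref{cor:gen-res}) that the first $\chi(e(-k_e))$ entries of \eqref{eq:int-seq} equal $k_e$ and the remaining ones equal $k_e-1$. Your choice of the $r'$ largest summands and the resulting computation of $c_1$ reproduce this, and, like the paper, you take the iterated genericity of the quotients $G_i$ from the discussion preceding Corollary \ref{cor:gen-res}.
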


\subsection{Bounds on Euler characteristics}

This subsection introduces key calculations related to the expected dimensions of Segre strata. Roughly speaking, the aim is to prove the following statement. Let $e$ be a class such that $r(e) \ge 0$ and $M(e)$ is positive-dimensional, let $k$ be an integer satisfying $k_e < k < \mu(e)$, and let $f$ be a class of rank $r'>0$ in the region bounded below by $L_{\oo(k)e}$, above by $\Delta_0$, and on the right by the vertical line through $e$.
Then
\begin{equation}\label{eq:euler-comparison}
    \chi(f,e-f) \le \chi(e(-k))-1,
\end{equation}
with equality if and only if $f=[\oo(k)]$. This statement can be interpreted as saying that the expected dimension of the locus in $M(e)$ of sheaves admitting $\oo(k)$ as a subsheaf is strictly larger than that of the locus of sheaves admitting a subsheaf of any other class $f$ in the region. 
As this statement is not true in full generality, we state three precise results, based on whether $r'<r(e)$, $r'>r(e)$, or $r'=r(e)$.

First, consider the case $r'<r(e)$, which implies $r(e) \ge 2$.
Consider the following region in the plane. Let $f_0 = e-(r(e)-r')(1,\mu(e),\mu(e)^2/2)$, which is the point directly below $e$ such that $e-f_0$ lies on $\Delta_0$. Then let $R_{r',k}(e)$ denote the closed region bounded above by $\Delta_0$, below by $L_{\oo(k)f_0}$, and on the right by the vertical line through $e$. See Figure \ref{fig:region-R}.

\begin{figure}[h]
\centering
\begin{tikzpicture}
\pgfmathsetmacro{\sK}{0.5}
\pgfmathsetmacro{\sE}{1}
\pgfmathsetmacro{\qF}{-0.5}
\pgfmathsetmacro{\qE}{-0.2}
\pgfmathsetmacro{\qK}{\sK*\sK/2}
\pgfmathsetmacro{\slope}{(\qF-\qK)/(\sE-\sK)}
\begin{axis}[
    xmin=-1.15, xmax=1.15,
    ymin=-0.6, ymax=0.8,
    axis lines=middle,
    axis equal,
    xtick=\empty, ytick=\empty,
    axis line style={draw=gray!80, line width=0.6pt, ->},
    xlabel={$\chbar_1$},
    ylabel={$\chbar_2$},
    width=8cm, height=6cm,
    clip=false,
    every axis x label/.style={at={(current axis.right of origin)}, anchor=west},
    every axis y label/.style={at={(current axis.above origin)}, anchor=south}
]
    \addplot[draw=none, name path=upper, domain=\sK:\sE, samples=60] {x^2/2};
    \addplot[draw=none, name path=lower, domain=\sK:\sE, samples=2]
        {\qK + \slope*(x-\sK)};
    \addplot[gray!30, opacity=0.5] fill between[of=upper and lower];

    \addplot[domain=-1.15:1.15, samples=100, black, line width=0.8pt] {x^2/2}
        node[pos=0.93, anchor=south east] {$\Delta_0$};

    \draw[black, line width=0.8pt] (axis cs:\sK,\qK) -- (axis cs:\sE,\qF);
    \draw[black, line width=0.8pt] (axis cs:\sE,{\sE*\sE/2}) -- (axis cs:\sE,\qF);

    \node[circle, fill=black, inner sep=1.2pt, label={above left:$\oo(k)$}] at (axis cs:\sK,\qK) {};
    \node[circle, fill=black, inner sep=1.2pt, label={right:$e$}]           at (axis cs:\sE,\qE) {};
    \node[circle, fill=black, inner sep=1.2pt, label={right:$f_0$}]         at (axis cs:\sE,\qF) {};

    \node[anchor=west] at (axis cs:{\sE+0.1},{\sE*\sE/2 - 0.3}) {$R_{r',k}(e)$};
\end{axis}
\end{tikzpicture}
\caption{The region $R_{r',k}(e)$ in the $(\chbar_1,\chbar_2)$-plane, bounded above by $\Delta_0$, below by $L_{\oo(k)f_0}$, and on the right by the vertical line through $e$.}
\label{fig:region-R}
\end{figure}

\begin{lem}\label{lem:chi-bound} Let $e$ be a class such that $r(e)\ge 2$ and $M(e)$ is positive-dimensional. Let $r'$ denote an integer satisfying $0 < r' < r(e)$. Then \eqref{eq:euler-comparison} holds for all classes $f$ in $R_{r',k}(e)$ satisfying $r(f)=r'$,
with equality if and only if $r'=1$ and $f=[\oo(k)]$.
\end{lem}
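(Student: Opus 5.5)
The plan is to make the inequality \eqref{eq:euler-comparison} a statement about a single real function on the region $R_{r',k}(e)$, for fixed rank $r'$. I will use the Riemann--Roch formula on $\bP^2$,
\[
\chi(f,g)=r(f)r(g)\bigl(P(\mu(g)-\mu(f))-\Delta(f)-\Delta(g)\bigr),\qquad P(x)=\tfrac12x^2+\tfrac32x+1.
\]
Write $\chi(f,e-f)=\chi(f,e)-\chi(f,f)$ and use $\chi(f,f)=r'^2(1-2\Delta(f))$. In the reduced coordinates $(s,q)=(\chbar_1(f),\chbar_2(f))$ this gives
\[
\Phi(s,q):=\chi(f,e-f)=r'r(e)\bigl(P(\mu(e)-s)-\Delta(e)\bigr)-r'^2+r'(2r'-r(e))\Delta(f),\qquad \Delta(f)=\tfrac{s^2}{2}-q .
\]
For fixed $s$, $\Phi$ is affine in $q$. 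So its maximum over the vertical slice of $R_{r',k}(e)$ is attained on the boundary: either on the upper arc $\Delta_0$ or on the lower segment $L_{\oo(k)f_0}$. It therefore suffices to bound $\Phi$ on these two curves.

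\emph{Upper arc $\Delta_0$, $k\le s\le\mu(e)$.} Here $\Delta(f)=0$, so $\Phi=r'r(e)(P(\mu(e)-s)-\Delta(e))-r'^2$. This is convex in $s$, so it is maximized at an endpoint.

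\emph{Lower segment.} Here $q$ is affine in $s$. The coefficient of $s^2$ in $\Phi$ is $\tfrac12r'r(e)+\tfrac12r'(2r'-r(e))=r'^2>0$, so $\Phi$ is again convex and maximized at an endpoint.

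Hence the maximum of $\Phi$ over the region is attained at one of three vertices: $\oo(k)$, the point $t$ of $\Delta_0$ directly above $e$, and $f_0$. It remains to check \eqref{eq:euler-comparison} at each vertex, using the standing hypotheses:
\begin{itemize}
\item $k_e<k<\mu(e)$, which by Lemma \ref{lem:k_e} gives $\chi(e(-k))\le 0$;
\item $M(e)$ is positive-dimensional, so $e$ lies on or below $\DLP$ and in particular $\Delta(e)\ge\tfrac12$ (Theorem \ref{thm:lpcurve});
\item $\chi(e(-x))$, viewed as a real quadratic \eqref{eq:chi} in $x$, is decreasing for $x<\mu(e)+\tfrac32$, so $\chi(e(-k))\ge\chi(e(-\mu(e)))=r(e)(1-\Delta(e))$.
\end{itemize}

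\emph{Vertex $\oo(k)$.} Here $\Phi=r'\chi(e(-k))-r'^2$, and
\[
r'\chi(e(-k))-r'^2-\bigl(\chi(e(-k))-1\bigr)=(r'-1)\bigl(\chi(e(-k))-r'-1\bigr).
\]
This is $\le0$ because $\chi(e(-k))\le 0$, and it is strictly negative when $r'\ge2$. Equality therefore forces $r'=1$ and $f=[\oo(k)]$. This matches the equality clause, provided the other two vertices give strict inequality.

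\emph{Vertex $t$.} Here $\Phi=r'r(e)(1-\Delta(e))-r'^2$, and I compare it with $r(e)(1-\Delta(e))-1\le\chi(e(-k))-1$. The difference is $(r'-1)\bigl(r(e)(1-\Delta(e))-r'-1\bigr)$. This is negative when $\Delta(e)\ge1$, since then the second factor is negative. In the range $\tfrac12\le\Delta(e)<1$ I would not use the crude bound $\chi(e(-k))\ge r(e)(1-\Delta(e))$. Instead I would use that $k\le\lceil\mu(e)\rceil-1$ is an integer, together with the explicit shape of $\DLP$ near $e$, to get a sharper lower bound on $\chi(e(-k))$.

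\emph{Vertex $f_0$.} Since $e-f_0=(r(e)-r')(1,\mu(e),\mu(e)^2/2)$ lies on $\Delta_0$ at slope $\mu(e)$, Riemann--Roch gives $\chi(f_0,e-f_0)=r'(r(e)-r')(1-\Delta(f_0))$. A short computation expresses $\Delta(f_0)$ in terms of $\Delta(e)$: it is $\tfrac{r(e)}{r'}\Delta(e)$. So
\[
\Phi(f_0)=(r(e)-r')\bigl(r'-r(e)\Delta(e)\bigr)\le (r(e)-r')\bigl(r'-\tfrac{r(e)}2\bigr),
\]
and this should be compared with the same lower bound for $\chi(e(-k))-1$ as at $t$.

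I expect the vertex comparisons at $t$ and $f_0$, in the regime where $\Delta(e)$ is close to $\tfrac12$, to be the main obstacle. The crude estimate $\chi(e(-k))\ge r(e)(1-\Delta(e))$ may not be strong enough there. In that case I would first try to combine \eqref{eq:chi-non-pos} with the fact that $e$ lies below the segments $l_{\alpha^l\alpha^+}$ and $l_{\alpha^+\alpha^r}$ of $\DLP$ for the relevant exceptional $\alpha$. This bounds $\Delta(e)$ from below in terms of $\mu(e)$ and $r(e)$, which should close the gap.
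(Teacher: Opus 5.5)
Your reduction is sound and is essentially the paper's argument. $\chi(f,e-f)$ is affine in $\ch_2(f)$, so only the two boundary pieces matter; along each piece it is a quadratic with positive leading coefficient, so only the vertices matter. The paper splits into $r'\le r(e)/2$ and $r'>r(e)/2$ to decide which single boundary piece is relevant. Your uniform treatment of both pieces is fine, just slightly redundant.

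The gap is in the vertex checks at $t$ and $f_0$. You close them only when $\Delta(e)\ge 1$ and leave the range $\tfrac12\le\Delta(e)<1$ to an unspecified argument using the shape of $\DLP$. So the proof is unfinished exactly at $f_0$, which is the vertex that genuinely matters when $r'>r(e)/2$. There your bound $(r(e)-r')(r'-r(e)/2)$ is positive and cannot be compared with $\chi(e(-k))-1\le -1$.

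The missing observation is already in your bullet list. $k_e<k<\mu(e)$ gives $\chi(e(-k))\le 0$. Since $\chi(e(-x))$ is strictly decreasing for $x<\mu(e)+\tfrac32$ and $k<\mu(e)$, you get $0\ge\chi(e(-k))>\chi(e(-\mu(e)))=r(e)(1-\Delta(e))$, i.e. $\Delta(e)>1$. So the regime you worry about is empty, and no input from $\DLP$ is needed.

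More cleanly, and this is how the paper finishes, write both vertex values through $\chi(e(-\mu(e)))$. First, $\Phi(t)=r'\chi(e(-\mu(e)))-r'^2<r'\chi(e(-k))-r'^2=\Phi(\oo(k))$, so $t$ needs no separate treatment. Indeed $\Phi$ is strictly decreasing along the arc of $\Delta_0$, since $P$ is increasing on $[0,\infty)$. Second,
\[
\Phi(f_0)=(r(e)-r')\chi(e(-\mu(e)))-(r(e)-r')^2<(r(e)-r')\chi(e(-k))-(r(e)-r')^2\le\chi(e(-k))-1 .
\]
The last step is your $\oo(k)$-vertex computation with $r'$ replaced by $r(e)-r'\ge 1$. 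These strict inequalities, together with strict convexity along both boundary pieces, also give the equality clause.
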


\begin{proof}
Writing $\ch(e)=(r,c,d)$, $\ch(f)=(r',c',d')$, and $g=e-f$, we have
\begin{align*}
    \chi(f,g) = r'(r-r') + \tfrac{3}{2}(r'c-rc') - c'(c-c') + r'd + (r-2r')d'.
\end{align*}
We prove the result by considering two cases: $r' \le r/2$ and $r/2 < r' < r$.

\medskip

\emph{Case 1:} Suppose $r' \le r/2$. Then $\chi(f,g)$ increases or is constant as $d'$ increases, hence an upper bound is obtained by taking $f$ on $\Delta_0$. Setting $f = r' [\oo(j)]$ for $j \ge k$, we have $\chi(f,g) = r' \chi(e(-j))-(r')^2$. As a function of $j$, this parabola has a minimum when $j=c/r+3/2$, and as we are considering only $j \le c/r$, $\chi(f,g)$ increases as $j$ decreases, hence $\chi(f,g)$ is bounded above by its value when $j=k$. Moreover, since both terms in the expression for $\chi(f,g)$ are non-positive, $\chi(f,g)$ increases as $r'$ decreases. Thus $\chi(f,g)$ is bounded above by its value when $f=[\oo(k)]$, giving the result.

\medskip

\emph{Case 2:} Suppose $r/2 < r' < r$. Then $\chi(f,g)$ increases as $d'$ decreases, so an upper bound for $\chi(f,g)$ is obtained by taking $f$ on $L_{\oo(k)f_0}$. We can parametrize the points on $l_{\oo(k)f_0}$
by $f(t) = (1-t)f_0 + tr'[\oo(k)]$, $t\in [0,1]$. Then $\chi(f(t),e-f(t))$ is quadratic as a function of $t$. Since the rank of $f(t)$ is independent of $t$, the $t^2$ term comes from $c_1(f(t))^2$ and has positive coefficient since $c/r \ne k$.
Hence, the maximum value of $\chi(f(t),e-f(t))$ must occur when $t=0$ or $1$. 
If $f = r' [\oo(k)]$, then, as discussed above, $\chi(f,g)$ is bounded by its value when $r'=1$, giving the desired bound.
If $f=f_0$, then a straightforward calculation
shows that $\chi(f_0,e-f_0)=(r-r')\chi(e(-c/r))-(r-r')^2$.
Since $\chi(e(-c/r))<\chi(e(-k))$, the result follows.
\end{proof}

Next, for the case $r'>r$,
consider the following region. Let $(a,a^2/2)$ denote the left intersection point of $L_{\oo(k)e}$ with $\Delta_0$, where $a = (2\ch_2(e)-c_1(e)k)/(c_1(e)-kr(e))$.
Let $f_{a}$ denote the class of rank $r'$ such that $e-f_{a}$ lies at this intersection point. Define $S_{r',k}(e)$ to be the closed region in the plane bounded below by $L_{\oo(k)e}$, above by $\Delta_0$, and on the right by the vertical line through $f_a$. See Figure \ref{fig:region-S}.

\begin{figure}[h]
\centering
\begin{tikzpicture}
\pgfmathsetmacro{\sK}{1.0}
\pgfmathsetmacro{\sA}{-2.5}
\pgfmathsetmacro{\sF}{1.75}
\pgfmathsetmacro{\sE}{2.4}
\pgfmathsetmacro{\qK}{\sK*\sK/2}
\pgfmathsetmacro{\qA}{\sA*\sA/2}
\pgfmathsetmacro{\slope}{(\sK+\sA)/2}
\pgfmathsetmacro{\xmaxx}{3.0}
\begin{axis}[
    xmin=-\xmaxx, xmax=\xmaxx,
    ymin=-1, ymax=4.2,
    axis lines=middle,
    xtick=\empty, ytick=\empty,
    axis line style={draw=gray!80, line width=0.6pt, ->},
    xlabel={$\chbar_1$},
    ylabel={$\chbar_2$},
    width=6cm, height=8cm,
    clip=false,
    every axis x label/.style={at={(current axis.right of origin)}, anchor=west},
    every axis y label/.style={at={(current axis.above origin)}, anchor=south}
]
    \addplot[draw=none, name path=upper, domain=\sK:\sF, samples=60] {x^2/2};
    \addplot[draw=none, name path=lower, domain=\sK:\sF, samples=2]
        {\qK + \slope*(x-\sK)};
    \addplot[gray!40, opacity=0.5] fill between[of=upper and lower];

    \addplot[domain=-2.75:2.75, samples=120, black, line width=0.8pt] {x^2/2}
        node[pos=0.95, anchor=south east] {$\Delta_0$};

    \addplot[domain=-2.9:2.9, samples=2, black, dashed, line width=0.7pt]
        {\qK + \slope*(x-\sK)}
        node[pos=0.3, anchor=south west] {$L_{\oo(k)e}$};

    \addplot[domain=\sK:\sF, samples=40, black, line width=1.1pt] {x^2/2};
    \draw[black, line width=1.1pt] (axis cs:\sK,\qK)
        -- (axis cs:\sF,{\qK + \slope*(\sF-\sK)});
    \draw[black, line width=1.1pt] (axis cs:\sF,{\sF*\sF/2})
        -- (axis cs:\sF,{\qK + \slope*(\sF-\sK)});

    \node[circle, fill=black, inner sep=1.2pt,
          label={left:$(a,a^2/2)$}]        at (axis cs:\sA,\qA) {};
    \node[circle, fill=black, inner sep=1.2pt,
          label={left:$\oo(k)$}]               at (axis cs:\sK,\qK) {};
    \node[circle, fill=black, inner sep=1.2pt,
          label={below left:$f_a$}]             at (axis cs:\sF,{\qK + \slope*(\sF-\sK)}) {};
    \node[circle, fill=black, inner sep=1.2pt,
          label={above right:$e$}]                   at (axis cs:\sE,{\qK + \slope*(\sE-\sK)}) {};

    \node[anchor=west] at (axis cs:{\sF+0.12},{(\sF*\sF/2 + \qK + \slope*(\sF-\sK))/2})
        {$S_{r',k}(e)$};
\end{axis}
\end{tikzpicture}
\caption{The region $S_{r',k}(e)$ in the $(\chbar_1,\chbar_2)$-plane, bounded below by $L_{\oo(k)e}$, above by $\Delta_0$, and on the right by the vertical line through $f_a$.}
\label{fig:region-S}
\end{figure}

\begin{lem}\label{lem:chi-bound-large-rank} Let $e$ be a class such that $r(e) \ge 0$ and $M(e)$ is positive-dimensional. Let $k$ be an integer satisfying $k_e < k < \mu(e)$ and let $r'>r(e)$ be an integer. Then \eqref{eq:euler-comparison} holds for all classes $f$ in $S_{r',k}(e)$ satisfying $r(f)=r'$,
with equality if and only if $r(e)=0$, $r'=1$ and $f=f_a$.
\end{lem}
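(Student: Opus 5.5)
Following the proof of Lemma \ref{lem:chi-bound}, I will write $\ch(e)=(r,c,d)$, $\ch(f)=(r',c',d')$, $g=e-f$, and use
\[
\chi(f,g)=r'(r-r')+\tfrac32(r'c-rc')-c'(c-c')+r'd+(r-2r')d'.
\]
Since $r'>r\ge 0$, the coefficient $r-2r'$ of $d'$ is negative. So for fixed $r'$ and $c'$, $\chi(f,g)$ increases as $d'$ decreases. The maximum over $S_{r',k}(e)$ is therefore attained on the lower boundary, the segment of $L_{\oo(k)e}$ running from $\oo(k)$ to $f_a$. The reduction is then to bound $\chi(f,g)$ along this segment.

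I will parametrize the segment linearly in the reduced coordinate $\chbar_1(f)$, writing $f(t)=(1-t)\,r'[\oo(k)]+t f_a$ for $t\in[0,1]$; this is legitimate because the rank is fixed at $r'$. As in Case 2 of Lemma \ref{lem:chi-bound}, $\chi(f(t),e-f(t))$ is quadratic in $t$. The $t^2$ term comes from $c'^2$ together with the $d'$-contribution, and I will check that its coefficient is nonnegative: the slope of $L_{\oo(k)e}$ is not the one that would make it degenerate, because $k\ne\mu(e)$. Convexity then puts the maximum at an endpoint, $t=0$ or $t=1$.

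The two endpoints are handled separately.
\begin{itemize}
\item At $t=0$, $f=r'[\oo(k)]$ and $\chi(f,g)=r'\chi(e(-k))-r'^2$. Since $k_e<k$, Lemma \ref{lem:k_e} gives $\chi(e(-k))\le 0$, so $r'\chi(e(-k))-r'^2\le \chi(e(-k))-1$. Equality would need $r'=1$, which is impossible because $r'>r\ge 0$ forces $r'\ge 1$, with $r'=1$ only when $r=0$. I need to check that this case yields strict inequality unless $r'=1$; if $r=0$ and $r'=1$, the value is $\chi(e(-k))-1$ on the nose, but the equality statement names $f_a$ rather than $[\oo(k)]$.
\item At $t=1$, $f=f_a$ and $g=e-f_a$ is a class of negative rank $r-r'$ lying on $\Delta_0$ at slope $a$, so $g=-(r'-r)[\oo(a)]$ formally. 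Then $\chi(f_a,g)=\chi(e-g,g)=\chi(e,g)-\chi(g,g)=-(r'-r)\chi(e,[\oo(a)])-(r'-r)^2$. Using Serre duality, and the fact that $(a,a^2/2)$ lies on $L_{\oo(k)e}$, I will compare this with $\chi(e(-k))-1$. One convenient form is $-(r'-r)\chi(e,\oo(a))=-(r'-r)\chi(e^\vee(a-3))$, and the point on $\Delta_0$ can be traded for the real-valued formula \eqref{eq:chi}.
\end{itemize}

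The main obstacle is this $f_a$ endpoint, and in particular relating $\chi(e,[\oo(a)])$ (where $a$ is typically non-integral) to $\chi(e(-k))$. My first attempt will be to write $\chi(e,[\oo(x)])$ as a quadratic in real $x$. Collinearity of $\oo(k)$, $\oo(a)$ and $e$ makes $x\mapsto\chi(\oo(x),e)$ and $x\mapsto \chi(e,\oo(x))$ related by a linear term, and Riemann--Roch shows that $\chi(\oo(a),e)$ and $\chi(\oo(k),e)$ satisfy a linear relation through $L_{\oo(k)e}$. From there I would show $-(r'-r)\chi(e,\oo(a))-(r'-r)^2\le \chi(e(-k))-1$. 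This uses $\chi(e,[\oo(k)])\le 0$, which holds by \eqref{eq:chi-non-pos} and Serre duality when $k\in(k_e,k_e+3]$, and more generally because $M(e)$ is positive-dimensional and $k<\mu(e)$. Equality will be tracked down to $r'-r=1$ with the relevant terms vanishing, which I expect to force $r=0$ and $r'=1$. If the direct comparison proves messy, the fallback is to reduce to $r'=r+1$ by showing the expression is decreasing in $r'$, exactly as the rank reduction was done in Case 1 of Lemma \ref{lem:chi-bound}.
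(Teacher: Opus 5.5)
Your reduction is the same as the paper's. Monotonicity in $d'$ pushes $f$ onto $l_{\oo(k)f_a}$. Convexity in $t$ puts the maximum at an endpoint; the $t^2$ coefficient is simply the square of the change in $c'$, since the $d'$-term is linear. The endpoint $f=r'[\oo(k)]$ is then handled as in Lemma \ref{lem:chi-bound}. Your side remark there is also correct: when $r(e)=0$ and $r'=1$, equality holds at $f=[\oo(k)]$ as well. So the ``only if'' in the statement should be read as adding $f_a$ to the equality case $f=[\oo(k)]$ already built into \eqref{eq:euler-comparison}.

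The gap is the $f_a$ endpoint, which is the whole substance of the lemma and which you leave at ``from there I would show''. What is needed is
\[
\chi(e(-a-3)) \ge -\chi(e(-k)),
\]
strict when $r(e)>0$ and an equality when $r(e)=0$. Together with $\chi(e(-k))\le 0$ (Lemma \ref{lem:k_e}), this gives $\chi(e,[\oo(a)])=\chi(e(-a-3))\ge 0$. Hence $-(r'-r)^2-(r'-r)\chi(e(-a-3))$ is maximized at $r'=r+1$, and the same inequality then yields both the bound and the equality analysis.

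Your sketch does not supply this inequality. The ``linear relation'' is never stated. The ingredient you invoke, $\chi(e,[\oo(k)])\le 0$, is not the relevant one: \eqref{eq:chi-non-pos} controls $\chi([\oo(k)],e)=\chi(e(-k))$, whereas $\chi(e,[\oo(k)])=\chi(e(-k-3))$ plays no role. There is also a slip in the Serre duality step: $\chi(e,[\oo(a)])=\chi(e^\vee(a))=\chi(e(-a-3))$, not $\chi(e^\vee(a-3))$.

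The paper closes this step in two cases. For $r(e)=0$ it computes directly with $a=2d/c-k$. For $r(e)>0$ it views $j\mapsto\chi(e(-j))$ as an upward parabola with roots $k_\pm$, uses $k_-\le k<\mu$ to reduce to $k_--(a+3)\ge k-k_-$, and verifies this via $a=\mu-2\Delta/(\mu-k)$ as a quadratic inequality in $\mu-k$.

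Your collinearity idea can also be made rigorous, and more cleanly. Write $e=\lambda[\oo(k)]+\nu(1,a,a^2/2)$ in $K(\bP^2)\otimes\bR$ with $\lambda+\nu=r(e)$. Using the real-valued formula $\chi(\oo(x),\oo(y))=\frac{(y-x+1)(y-x+2)}{2}$, bilinearity gives
\[
\chi([\oo(k)],e)+\chi(e,[\oo(a)])=r(e)\bigl(1+\chi(\oo(a-k))\bigr)=\tfrac{r(e)}{2}\bigl((a-k)^2+3(a-k)+4\bigr).
\]
This vanishes for $r(e)=0$ and is positive otherwise. Either way, some such computation has to be carried out, and as written your proposal stops exactly where the lemma's content begins.
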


\begin{proof} Write $e=(r,c,d)$, $f=(r',c',d')$, and $g=e-f$.
Since $\chi(f,g)$ increases as $d'$ decreases, it suffices to find a bound for $f$ lying on $l_{\oo(k)f_a}$.
Similar to the argument in the proof of Case 2 of Lemma \ref{lem:chi-bound}, the maximum value of $\chi(f,g)$ is bounded by its value when $f$ is an endpoint of the segment, namely when $g$ is a multiple of
$(1,a,a^2/2)$ or when $f$ is a multiple of $[\oo(k)]$. In the latter case, we already observed in the proof of Lemma \ref{lem:chi-bound} that $\chi(f,g)$ is bounded above by $\chi(e(-k))-1$. For the former case,
we have $g = (r-r')(1,a,a^2/2)$ and
\[
    \chi(f,g)=\chi(e-g,g) = -(r'-r)^2 -(r'-r) \chi(e^*(a)).
\]
Note that $\chi(e^*(a)) = \chi(e(-a-3))$, so it suffices to prove
\begin{equation}\label{eq:suff-ineq}
    -\chi(e(-a-3)) \le \chi(e(-k)),
\end{equation}
as this ensures $\chi(e(-a-3))$ is non-negative and thus we may reduce to the case $r'=r+1$, giving the result.

In the case $r=0$, $\chi(e(-j)) = \frac{3}{2}c+d-cj$ and $a = 2d/c-k$, and a simple calculation shows $\chi(e(-a-3)) = -\chi(e(-k))$, giving (\ref{eq:suff-ineq}).
In the case $r>0$, as a function of $j$, $\chi(e(-j))$ is an upward parabola with roots $k_\pm=\mu+\frac{3}{2} \pm \sqrt{\frac{1}{4}+2\Delta}$, where we write $\Delta=\Delta(e)$ and $\mu=\mu(e)$. Moreover, $k$ satisfies $k_- \le k < \mu$. To deduce (\ref{eq:suff-ineq}), it suffices to prove
\[
    k_- - (a+3) \ge k-k_-,
\]
as the parabola is steeper to the left of $k_-$. Using
the identity $a = \mu-2\Delta/(\mu-k)$, this inequality is equivalent to
\[
    (\mu-k)^2 - 2 \sqrt{\tfrac{1}{4}+2\Delta}\,(\mu-k) + 2\Delta \ge 0.
\]
The left side is quadratic in $\mu-k$ and has roots $\sqrt{1/4+2\Delta} \pm 1/2 = \{\,\mu-k_-+1, \mu-k_-+2\,\}$. As $\mu-k \le \mu-k_-$, $\mu-k$ cannot lie between these roots, which concludes the proof.
\end{proof}

In the case $r'=r(e)$, \eqref{eq:euler-comparison} may fail for $f$ of rank $r'$ above $L_{\oo(k)e}$. However, the inequality can be restored by introducing the expected dimension of the $\oo(k)$-Segre stratum in $M(f)$ or $M(g)$ as a correction term. This is motivated by the observation that for a sheaf $E$ admitting a short exact sequence \eqref{eq:ext}, $\hom(\oo(k),E)>0$ implies $\hom(\oo(k),F)>0$ or $\hom(\oo(k),G)>0$.

\begin{lem}\label{lem:codimension-estimates-r=r'} Assume $\ch_0(e) \ge 1$ and $k_e < k < \mu(e)$. Suppose $f$ is a class of rank $r(e)$ that lies in the region bounded below by $L_{\oo(k)e}$, above by $\Delta_0$, and on the right by the vertical line through $e$ (not including the right boundary). Set $g=e-f$. Then
\begin{enumerate}[(a)]
\item $\chi(e(-k))> \chi(f(-k))+\chi(f,g)$; and
\item if $\ch_1(g)/2+\ch_2(g)/\ch_1(g) \ge k$, then $\chi(e(-k)) > \chi(g(-k))+\chi(f,g)$.
\end{enumerate}
\end{lem}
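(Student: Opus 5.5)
Throughout, write $\ch(e)=(r,c,d)$ and $\ch(f)=(r,c',d')$. Then $g=e-f=(0,\gamma,\delta)$ with $\gamma=c-c'$ and $\delta=d-d'$. Since $f$ lies strictly to the left of the vertical line through $e$, we have $\gamma>0$, so $g$ is the class of a torsion sheaf supported on a curve. The plan is to reduce both inequalities to explicit polynomial inequalities via Riemann--Roch, $\chi(a,b)=r_ar_b+\tfrac32(r_ac_b-r_bc_a)-c_ac_b+r_ad_b+r_bd_a$. With $r_g=0$ this gives
\[
\chi(f,g)=\tfrac32 r\gamma-c'\gamma+r\delta,\qquad \chi(g(-k))=\tfrac32\gamma+\delta-k\gamma=\chi(g)-k\gamma,
\]
and by additivity $\chi(e(-k))=\chi(f(-k))+\chi(g(-k))$.

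For (a), additivity turns the claim into $\chi(g(-k))>\chi(f,g)$, that is,
\[
D_a:=(1-r)\chi(g)+\gamma(c'-k)>0.
\]
Fix $c'$. Then $D_a$ is affine in $d'$, and since $r\ge1$ it is non-decreasing in $d'$. So it suffices to check $D_a>0$ when $f$ lies on the lower boundary $L_{\oo(k)e}$. In that case $g$ is parallel to $L_{\oo(k)e}$, so $\delta/\gamma$ equals the slope $s=(k+a)/2$ of that line, with $a$ as in the definition of $S_{r',k}(e)$. This makes $D_a$ equal to $\gamma$ times an explicit expression in $c'$, $k$, $r$ and $\mu(e)$. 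I would then use the region's constraints $k\le\mu(f)<\mu(e)$ together with $\chi(e(-k))\le 0$, which comes from $k>k_e$ (Lemma \ref{lem:k_e}), to show this expression is positive. The case $r=1$ is immediate: then $D_a=\gamma(c'-k)$, which is positive because $f$ is above $L_{\oo(k)e}$ and is not $\oo(k)$ itself.

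For (b), the claim is $\chi(f(-k))>\chi(f,g)$, i.e.\ $\chi(e(-k))-\chi(g(-k))>\chi(f,g)$. Again fix $c'$ and treat everything as affine in $d'$, so that the extremal cases are $f$ on $L_{\oo(k)e}$ and $f$ on $\Delta_0$ (i.e.\ $f=r[\oo(\mu(f))]$). The hypothesis $\ch_1(g)/2+\ch_2(g)/\ch_1(g)\ge k$, i.e.\ $\gamma/2+\delta/\gamma\ge k$, is exactly a lower bound on $\delta/\gamma$. I expect it to be the input that gives the correct sign on the boundary where the monotonicity alone does not suffice. Concretely, I would rewrite $\chi(f(-k))-\chi(f,g)$ as $\chi(f(-k))-\gamma\bigl(\tfrac32r-c'\bigr)-r\delta$. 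Then I would substitute the lower bound for $\delta$ coming from the hypothesis, and the upper bound coming from $f$ lying on or below $\Delta_0$. The resulting quadratic in $\mu(f)-k$ has a sign I would verify using $k<\mu(e)$ and $\chi(e(-k))\le0$, much as in the proof of Lemma \ref{lem:chi-bound-large-rank}.

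The main obstacle will be the endpoint verification in each part. After reducing to a boundary line, one needs a clean factorization, analogous to the quadratic in $\mu-k$ with roots $\mu-k_-+1$ and $\mu-k_-+2$ in Lemma \ref{lem:chi-bound-large-rank}. This factorization should show that the combination of $k_-\le k<\mu(e)$ with the region constraints places the relevant quantity outside the interval between the roots. I would first try to express everything in terms of $t=\mu(e)-k$ and $\Delta(e)$, using $a=\mu-2\Delta/(\mu-k)$. I would also check that strictness survives, which holds because the right boundary is excluded, so $\gamma>0$.
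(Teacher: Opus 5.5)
Your argument for (a) is correct and is essentially the paper's. $D_a$ is non-decreasing in $d'$, so $f$ on $L_{\oo(k)e}$ is the worst case. There $D_a/\gamma=(r-1)(k-\tfrac32-s)+r(\mu(f)-k)$, and $s<k-\tfrac32$ because $\chi(e(-k))\le 0$. After twisting to $k=0$ the paper writes this as $(c-c')\bigl(c'+(r-1)(-\tfrac dc-\tfrac32)\bigr)$.

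Part (b) has a genuine gap. The quantity $D_b=\chi(f(-k))-\chi(f,g)$ has $d'$-coefficient $1+r>0$: the $+1$ comes from $\chi(f(-k))$ and the $+r$ from $-r\delta$. So $D_b$ is decreasing in $\delta$.

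The two bounds you propose to substitute both go the wrong way. The lower bound on $\delta$ from the hypothesis, and the upper bound on $d'$ from $f$ lying below $\Delta_0$, can only bound $D_b$ from above. The binding case is again $f$ on $L_{\oo(k)e}$. There, $k<\mu(e)$ and $\chi(e(-k))\le 0$ do not fix the sign. After twisting to $k=0$, $D_b$ along the line is a concave quadratic in $c'$. Its value at the excluded endpoint $c'=c$ is $r+\tfrac32c+d=\chi(e)\le 0$.

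In fact (b) fails without its hypothesis. Take $k=0$, $\ch(e)=(1,4,-100)$ and $\ch(f)=(1,3,-\tfrac{149}{2})$; these satisfy all the other assumptions, and one gets $\chi(f)-\chi(f,g)=-42$.

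The missing idea is to combine the hypothesis with the line bound, rather than substitute it into $D_b$.
\begin{itemize}
\item Since $f$ lies above $L_{\oo e}$, we have $\tfrac{d'-d}{c-c'}\ge-\tfrac dc$.
\item The hypothesis reads $\tfrac{c-c'}{2}\ge\tfrac{d'-d}{c-c'}$.
\item Together these give $c-c'\ge-\tfrac{2d}{c}$.
\end{itemize}
This lower bound on $\ch_1(g)$ keeps $\mu(f)$ away from $\mu(e)$. It lets $c'(c-c'+\tfrac32)\ge c'(-\tfrac{2d}{c}+\tfrac32)$ absorb the negative term $\tfrac dc c'$, which yields the paper's bound $r+c'(-\tfrac dc+\tfrac32)+r(c-c')(-\tfrac dc-\tfrac32)>0$.

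In your own framework the same fix works as follows. After reducing to the line, the hypothesis restricts $c'$ to $c'\le c+\tfrac{2d}{c}$. Since the quadratic is concave, it suffices to check it at $c'=0$ and $c'=c+\tfrac{2d}{c}$, where it is positive.
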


\begin{proof}
Write $e=(r,c,d)$ and $f=(r,c',d')$. For simplicity, we may assume after twisting that $k=0$.
Then $c>0$ and $L_{\oo e}$ is the line $q=(d/c)s$, so by the assumption that $f$ lies in the region, $0 < c' < c$ and
\begin{equation}\label{eq:d'-lower-bound}
    d' \geq (\tfrac{d}{c})c'.
\end{equation}
Moreover, since $k_e < 0$, we have $\chi(e) \le 0$, namely \begin{equation}\label{eq:d-upper-bound}
-\tfrac{d}{c}-\tfrac{3}{2} \geq \tfrac{r}{c} > 0.
\end{equation}
Note also that
\[
    \chi(f,g)=\tfrac{3}{2}r(c-c')-c'(c-c')-r(d'-d).
\]
Thus, for (a), (\ref{eq:d'-lower-bound}) implies that
\begin{align*}
    \chi(e)-\chi(f)-\chi(f,g)
    &= \tfrac{3}{2}(c-c')-(d'-d)-\tfrac{3}{2}r(c-c')+c'(c-c')+r(d'-d) \\
    &= -\tfrac{3}{2}(r-1)(c-c')+c'(c-c')+(r-1)(d'-d) \\
    &\geq -\tfrac{3}{2}(r-1)(c-c')+c'(c-c')-(r-1)\tfrac{d}{c}(c-c') \\
    &= (c-c')(c'+(r-1)(-\tfrac{d}{c}-\tfrac{3}{2})).
\end{align*}
As $c-c' > 0$ and $c' > 0$, the last expression is positive by (\ref{eq:d-upper-bound}), which proves (a).

For (b), combining the assumption $\ch_1(g)/2+\ch_2(g)/\ch_1(g) \ge 0$ with (\ref{eq:d'-lower-bound}) yields
\begin{equation}\label{eq:c-c'-lower-bound}
    \tfrac{c-c'}{2} \ge \tfrac{d'-d}{c-c'} \ge -\tfrac{d}{c}.
\end{equation}
Thus, using (\ref{eq:c-c'-lower-bound}) and (\ref{eq:d'-lower-bound}),
\begin{align*}
    \chi(e)-\chi(g)-\chi(f,g)
    &= r + \tfrac{3}{2}c'+d'-\tfrac{3}{2}r(c-c')+c'(c-c')+r(d'-d) \\
    &= r + d' + c'(c-c'+\tfrac{3}{2}) + r(c-c')(\tfrac{d'-d}{c-c'}-\tfrac{3}{2}) \\
    &\geq r + (\tfrac{d}{c})c' + c'(-2\tfrac{d}{c}+\tfrac{3}{2}) + r(c-c')(-\tfrac{d}{c}-\tfrac{3}{2}) \\
    &= r + c'(-\tfrac{d}{c}+\tfrac{3}{2})+r(c-c')(-\tfrac{d}{c}-\tfrac{3}{2}),
\end{align*}
which is positive since every term is positive by (\ref{eq:d-upper-bound}).
\end{proof}

\section{Rank-one subsheaves over $\bP^2$: Bridgeland stability conditions}\label{sec:bridgland}

In this section, we use wall-crossing for Bridgeland moduli spaces over $\bP^2$ as a tool to study $\oo(k)$-Segre strata when $k>k_e$. The general notion of stability conditions was introduced by Bridgeland in the seminal work \cite{Bri07}. Stability conditions have been successfully applied in the study of the birational geometry of moduli spaces, see for instance \cite{ABCH13,BayMac14a,BayMac14b,Toda14,NuerYos20}. Over $\bP^2$, works related to wall-crossing that are particularly relevant to our study are \cite{BerMarWan14,CosHuiWoo17,LiZhao19}. As we find the setup used by Li and Zhao \cite{LiZhao19} to be the most convenient, we base our notation and setup on their work.
All these works over $\mathbb{P}^2$ are built upon the foundational work by Dr\'ezet and Le Potier \cite{DreLeP85}.

\subsection{Bridgeland stability conditions}

Let $\Db(\bP^2)$ denote the bounded derived category of coherent sheaves on $\bP^2$. Given a real number $s$, the following full subcategories of $\Coh(\bP^2)$ form a torsion pair:
\begin{align*}
    \cT_{s} &=\left\{\, E\in \Coh(\bP^2)\,\middle\vert \,\mu(G)>s
    \mbox{ for every quotient } G \mbox{ of }E\, \right\}\mbox{ and }\\
    \cF_{s} &=\left\{\, E\in \Coh(\bP^2)\,\middle\vert \,\mu(F)\leq s
    \mbox{ for every subsheaf } F \mbox{ of }E\, \right\}.
\end{align*}
The heart of the corresponding tilted $t$-structure is the full subcategory of $\Db(\bP^2)$ defined by
\begin{equation*}
    \Coh^{\#s}=\left\{\,E^\bullet\in \Db(\bP^2) \,\middle\vert\,
    \begin{array}{l}
        \cH^{-1}(E^\bullet)\in \cF_s,   \\
        \cH^0(E^\bullet)\in \cT_s,\\
        \cH^i(E^\bullet)=0 \mbox{ if }i\not=0\mbox{ or }-1
    \end{array}
    \,\right\},
\end{equation*}
where $\cH^i$ denote the cohomology sheaves.
Note that if $E$ is a slope-stable coherent sheaf, then $E \in \Coh^{\#s}$ if $\ch_0(E)=0$ or if $\mu(E) > s$, while $E[1] \in \Coh^{\#s}$ if $\mu(E) \le s$.

Given $(s,q)\in \bR^2$,
we define a function on $\Db(\bP^2)$ by
\begin{align*}
    Z_{s,q}(E)=(-\ch_2(E)+q \cdot\ch_0(E))+i(\ch_1(E)-s\cdot \ch_0(E)).
\end{align*}
If $(s,q)$ is in the set
\[
    \mathrm{Geo}_{\mathrm{DLP}} = \{(s,q) \in \bU \mid \text{$(s,q)$ is above $\DLP$ and not on $l_{\alpha\alpha^+}$ for any exceptional $\alpha$}\},
\]
then
\[\sigma_{s,q}=(\Coh^{\# s}, Z_{s,q})\] is a \emph{geometric stability condition} on $\Db(\bP^2)$.
When referring to a stability condition $\sigma = \sigma_{s,q}$, we always assume $(s,q) \in \mathrm{Geo}_{\mathrm{DLP}}$. For each nonzero object $E$ in $\Coh^{\#s}$, the \emph{phase} of $E$ is a real number in the interval $(0,1]$ defined as
\[
    \phi_{s,q}(E)=\tfrac{1}{\pi} \mathrm{Arg}(Z_{s,q}(E)).
\]
We say $E$ is \emph{$\sigma_{s,q}$-semistable} if for every subobject $F \hookrightarrow E$ in $\Coh^{\#s}$, $\phi_{s,q}(F) \le \phi_{s,q}(E)$. If this inequality is strict for all subobjects, then $E$ is \emph{$\sigma_{s,q}$-stable}.

The following fact provides a geometric interpretation of phase comparisons.

\begin{lem}[\cite{LiZhao19} Lem. 1.19 \& 1.20]\label{lem:same-slope}
    Let $P=(s,q) \in \Geo_{\mathrm{DLP}}$ and $E$ and $F$ be nonzero objects in $\Coh^{\#s}$. Then $\phi_{s,q}(F) < \phi_{s,q}(E)$ if and only if the slope of $L_{PF}$ is less than the slope of $L_{PE}$. In particular, $\phi_{s,q}(F)=\phi_{s,q}(E)$ if and only if $P$, $F$, and $E$ are collinear.
\end{lem}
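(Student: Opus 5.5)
The statement is cited from Li--Zhao, but here is how I would prove it directly; it is essentially elementary linear algebra in the $(\chbar_1,\chbar_2)$-plane. I will work with the central charge. Since $\Coh^{\#s}$ is the heart, every nonzero object $E$ has $Z_{s,q}(E)$ in the upper half plane $\{\operatorname{Im}>0\}\cup\bR_{<0}$. The phase is then determined by the cotangent
\[
\cot(\pi\phi)=\frac{-\ch_2(E)+q\ch_0(E)}{\ch_1(E)-s\ch_0(E)},
\]
with the convention that phase $1$ corresponds to $\operatorname{Im} Z=0$. On $(0,1]$ the map $\phi\mapsto\cot(\pi\phi)$ is strictly decreasing, extended by $-\infty$ at $\phi=1$. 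So comparing phases reduces to comparing these ratios.

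The first step is to handle objects with $\ch_0(E)\neq 0$. Divide numerator and denominator by $\ch_0$:
\[
\frac{q-\chbar_2(E)}{\chbar_1(E)-s}=-\frac{\chbar_2(E)-q}{\chbar_1(E)-s}.
\]
Up to sign, this is the slope of the line $L_{PE}$ from $P=(s,q)$ to the reduced class of $E$. Hence $\cot(\pi\phi)$ equals minus that slope, and a larger phase corresponds to a larger slope of $L_{PE}$. This is exactly the claimed equivalence.

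The second step covers the edge cases.
\begin{itemize}
\item If $\ch_0(E)=0$ and $\ch_1(E)>0$, the class lies on the line at infinity. Here $L_{PE}$ is the line through $P$ with slope $\ch_2/\ch_1$, and the cotangent is $-\ch_2/\ch_1$, so the same identity holds.
\item If $\operatorname{Im}Z=0$ with $Z\neq 0$, I need to show this happens only for a vertical line, i.e.\ slope $+\infty$, matching phase $1$. When $\chbar_1=s$, i.e.\ $\ch_1=s\ch_0$, this is the vertical line through $P$. When $\ch_0=\ch_1=0$, the class is the point at infinity in the vertical direction.
\end{itemize}
For this convention to be consistent, the vertical direction must always point "upward," giving slope $+\infty$ rather than $-\infty$. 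I also need to check that the sign of $\ch_0$ does not flip the comparison.

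This sign check is the main obstacle. For objects $F[1]$ with $\ch_0<0$, dividing by $\ch_0$ changes the signs of numerator and denominator together, so the ratio, and hence the slope, is unchanged. Still, one must verify that the branch of $\operatorname{Arg}$ is correct: the point must have $\operatorname{Im}Z>0$, which holds because $\ch_1-s\ch_0>0$ in the heart apart from the boundary cases. This is where membership of $P$ in $\Geo_{\mathrm{DLP}}$ enters. It guarantees that $Z(E)\neq 0$ for all nonzero objects of the heart, and that objects with $\operatorname{Im}Z=0$ have $\operatorname{Re}Z<0$. By Li--Zhao this follows from $P$ lying above $\DLP$, which ensures $q>\chbar_2(E)$ whenever $\chbar_1(E)=s$ for semistable factors.

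Once the equivalence is established, collinearity is immediate: equal phases mean equal slopes of $L_{PF}$ and $L_{PE}$, and since both lines pass through $P$, the points $P$, $F$ and $E$ are collinear.
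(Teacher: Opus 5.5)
Your argument is correct and is essentially the standard computation behind the cited Li--Zhao lemmas (the paper gives no proof of its own): $Z_{s,q}$ kills the class $(1,s,q)$, so whenever $\operatorname{Im}Z>0$ the ratio $\cot(\pi\phi)=\operatorname{Re}Z/\operatorname{Im}Z$ equals minus the slope of $L_{PE}$ regardless of the sign of $\ch_0$, and phase $1$ corresponds to vertical lines. One small correction concerns your justification that nonzero objects of $\Coh^{\#s}$ map into $\{\operatorname{Im}>0\}\cup\bR_{<0}$: being above $\DLP$ is not enough, and you also need $P\notin l_{\alpha\alpha^+}$, because for such $P$ with $s=\mu(\alpha)$ the object $E_\alpha[1]\in\Coh^{\#s}$ has $Z_{s,q}(E_\alpha[1])=\ch_0(\alpha)(\chbar_2(\alpha)-q)\ge 0$ (this is exactly why $\Geo_{\mathrm{DLP}}$ excludes those segments), although this fact is in any case part of the paper's standing assumption that $\sigma_{s,q}$ is a stability condition.
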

In view of the lemma and the definition of $\mathrm{Geo}_{\mathrm{DLP}}$, we consider points $(s,q)$ inducing stability conditions and the reduced Chern characters inside the same real plane $\bU$. We always view $\chbar_1$ or $s$ as giving the horizontal axis and $\chbar_2$ or $q$ as giving the vertical axis, and use terms such as ``above'' or ``to the right of'' accordingly.
We sometimes denote a point $(s,q)\in \Geo_{\mathrm{DLP}}$ as $\sigma_{s,q}$ or just $\sigma$. Recall also Convention \ref{conv:non-reduced}.
See Figure \ref{fig:s,q} for the $(s,q)$-plane $\bU$.

\begin{exmp}[\cite{LiZhao19} Corollary 1.22]\label{ex:exc-stable} Suppose $\alpha$ is an exceptional class and $E_\alpha$ is the corresponding exceptional bundle. Let $\sigma$ be a stability condition. If $\sigma$ is left of $\alpha$, then $E_\alpha$ is $\sigma$-stable. If $\sigma$ is above or right of $\alpha$, then $E_\alpha[1]$ is $\sigma$-stable. 
\end{exmp}

\begin{rem}\label{rem:bridgeland-lpcurve} Theorem \ref{thm:lpcurve} extends to the setting of Bridgeland stability. Namely, if $E$ is $\sigma$-semistable for some $\sigma$, then $E$ is proportional to an exceptional class or lies on or below $\DLP$ \cite[Corollary 1.33]{LiZhao19}.
\end{rem}

The following key fact ensures that stability of an object $E$ is unchanged as $\sigma$ varies along a line through $E$, as long as $\sigma$ does not cross a segment $l_{\alpha \alpha^+}$. We include the proof, as it is instructive.

\begin{lem}[\cite{LiZhao19} Corollary 1.24] \label{lem:no-change-along-wall} Let $\sigma=\sigma_{s,q}$ and $E \in \Coh^{\#s}$ be $\sigma$-semistable. Then for any $\sigma'=\sigma_{s',q'}$ on $L_{\sigma E}$ such that $l_{\sigma' \sigma}$ is contained in $\Geo_{\mathrm{DLP}}$, $E$ is $\sigma'$-semistable.
\end{lem}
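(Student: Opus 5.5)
The plan is to show that the set of points $\tau$ on the segment $l_{\sigma'\sigma}$ at which $E$ is $\tau$-semistable is both closed and open in the segment. Since $l_{\sigma'\sigma}$ is connected and contains $\sigma$, this set would then be all of $l_{\sigma'\sigma}$, and in particular $E$ would be $\sigma'$-semistable.

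Setup. The map $(s,q)\mapsto \sigma_{s,q}$ from $\Geo_{\mathrm{DLP}}$ to the space of stability conditions is continuous. We have $l_{\sigma'\sigma}\subset \Geo_{\mathrm{DLP}}$, and by Remark \ref{rem:bridgeland-lpcurve} the reduced class of $E$ lies on or below $\DLP$ or is exceptional, so it is not a point of $\Geo_{\mathrm{DLP}}$. Hence $Z_\tau(E)\neq 0$ for every $\tau$ on the segment.

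Closedness. This is the standard fact that semistability of a fixed class is a closed condition on the space of stability conditions. To apply it, one needs a support property or local finiteness of walls for the class of $E$ near $l_{\sigma'\sigma}$, as in \cite{LiZhao19}. Under it, the locus where $E$ is semistable is locally a union of closed chambers and walls.

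Openness. This is where the geometry of Lemma \ref{lem:same-slope} enters. Suppose $E$ is $\tau$-semistable but not $\tau''$-semistable for points $\tau''$ of the segment arbitrarily close to $\tau$.
\begin{enumerate}[(i)]
\item By local finiteness, there is a subobject $F\subset E$ in the heart near $\tau$ with $\phi_\tau(F)=\phi_\tau(E)$ that destabilizes on one side of $\tau$.
\item By Lemma \ref{lem:same-slope}, this equality of phases means $\tau$, $F$ and $E$ are collinear. So the wall at $\tau$ is an open part of the line $L_{\tau E}$, and this line equals $L_{\sigma E}$ because $\tau$ lies on $L_{\sigma E}$.
\item Again by Lemma \ref{lem:same-slope}, $F$ then has the same phase as $E$ at every point of $L_{\sigma E}$. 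So $F$ cannot destabilize $E$ at points of the line on either side of $\tau$.
\end{enumerate}
More generally, every numerical wall for $E$ is a line through the reduced class of $E$. Two distinct such lines meet only at $E$, which is not in $\Geo_{\mathrm{DLP}}$. So moving along $L_{\sigma E}$ inside $\Geo_{\mathrm{DLP}}$ never crosses a wall for $E$; at most it runs inside one, where phases of $E$ and its destabilizing candidates stay equal.

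Main obstacle. The step I expect to be hardest is that the heart $\Coh^{\#s}$ changes as $s$ varies along the line, so "subobject of $E$" is not a fixed notion. My first attempt is to handle this through the phase.
\begin{itemize}
\item Since $Z_\tau(E)$ never vanishes and moves continuously, the phase of $E$ moves continuously.
\item The phase cannot reach $0$: that would require $\ch_1(E)-s\,\ch_0(E)=0$ with $-\ch_2(E)+q\,\ch_0(E)>0$, i.e. $\tau$ directly above $E$. But at such a $\tau$, $\tau$ and $E$ lie on a vertical line, which is then $L_{\sigma E}$, and along it $E$ has constant phase $1$ (the imaginary part vanishes identically on this line).
\end{itemize}
Thus $E$ stays a semistable object of phase in $(0,1]$ in the heart. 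To make this rigorous, I would use that, in the family $\{\sigma_\tau\}$, semistable objects of a fixed class form a closed set and their phases vary continuously. This is the deformation statement underlying \cite[Cor.~1.24]{LiZhao19}, which I would cite for the local structure of walls and the openness step.
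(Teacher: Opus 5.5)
Your argument is correct in outline, but it takes a genuinely different route from the paper.

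\textbf{The paper's route.} The paper works directly inside the hearts. Take $F$ to be the maximal-slope HN factor of $\cH^{-1}(E)$ and $G$ the minimal-slope quotient of $\cH^0(E)$. Then $\sigma$-semistability puts $\sigma$ on or below $L_{FE}$ and $L_{GE}$. Since $\Geo_{\mathrm{DLP}}$ contains no point on or directly below the class of a semistable sheaf, the segment $l_{\sigma'\sigma}$ can never reach $s'=\mu(F)$ or $s'=\mu(G)$, so $E\in\Coh^{\#s'}$. A $\sigma'$-semistable destabilizing subobject is then shown the same way to lie in $\Coh^{\#s}$, where it would $\sigma$-destabilize $E$.

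\textbf{Your route.} You instead run a connectedness argument in $\mathrm{Stab}$ (closedness of the semistable locus plus openness), and recover heart membership afterwards from continuity of the phase. Your openness step isolates the real mechanism. Numerical walls for $[E]$ are lines through $[E]$: indeed $\ker Z_{s,q}$ is spanned by $(1,s,q)$, so $Z_\tau(\gamma)$ and $Z_\tau(E)$ are $\mathbb{R}$-dependent exactly when $\tau\in L_{\gamma E}$. Hence they are aligned either everywhere on $L_{\sigma E}$ or nowhere on $l_{\sigma'\sigma}$, and the path never crosses a wall. This would work for any continuous family of stability conditions with this wall geometry.

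\textbf{What each buys.} Your version imports several inputs: continuity of $(s,q)\mapsto\sigma_{s,q}$ into $\mathrm{Stab}$, the support property on $\Geo_{\mathrm{DLP}}$, closedness of semistable loci, and the local wall statement in your step (i). Deferring to ``the deformation statement underlying'' \cite[Cor.~1.24]{LiZhao19} also comes close to citing the statement itself. The paper's argument needs only Lemma \ref{lem:same-slope} and the location of semistable classes.

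\textbf{One point to tighten.} You only rule out the phase of $E$ dropping to $0$; the upper end also needs an argument. Treat both ends at once. The phase can leave $(0,1]$ only through a point where $\mathrm{Im}\,Z_\tau(E)=\ch_1(E)-s\,\ch_0(E)$ vanishes. On $l_{\sigma'\sigma}$ this quantity vanishes either nowhere or identically:
\begin{enumerate}[(1)]
\item It is constant if $\ch_0(E)=0$.
\item If $\ch_0(E)\neq 0$ and $L_{\sigma E}$ is not vertical, the line meets $\{s=\mu(E)\}$ only at $[E]\notin l_{\sigma'\sigma}$, so it vanishes nowhere and the phase stays in $(0,1)$.
\item If it vanishes identically, then $\mathrm{Re}\,Z_\tau(E)$ is nonvanishing on the segment (as you showed $Z_\tau(E)\neq 0$). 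So it has constant sign there, and the phase is constantly $1$.
\end{enumerate}
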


\begin{proof} First, we prove $E$ is in $\Coh^{\#s'}$. Let $F$ denote a maximum slope subsheaf of $\cH^{-1}(E)$ and $G$ denote a minimum slope quotient of $\cH^0(E)$. Then $F$ and $G$ are slope-semistable sheaves, $\mu(F) \le s < \mu(G)$, $F[1]$ is a subobject of $E$, and $G$ is a quotient object of $E$. By $\sigma$-semistability of $E$, $\sigma$ lies on or below the lines $L_{FE}$ and $L_{GE}$.
As $\Geo_{\mathrm{DLP}}$ contains no points on or directly below $F$ and $G$ and $l_{\sigma'\sigma}$ is contained in $\Geo_{\mathrm{DLP}}$, we must have $\mu(F) < s' < \mu(G)$, hence $E \in \Coh^{\#s'}$. Now suppose for contradiction that $E$ is $\sigma'$-destabilized by a $\sigma'$-semistable subobject $E'$. By the first part of the proof, $E' \in \Coh^{\#s}$. But then $E'$ $\sigma$-destabilizes $E$, contradiction.
\end{proof}

Given a full strong exceptional sequence $\mathcal{E}=(E_1, E_2, E_3)$ of coherent sheaves in $\bP^2$, the full abelian subcategory $\mathcal{A}_{\mathcal{E}} = \langle E_1[2],E_2[1],E_3\rangle$ of $\Db(\bP^2)$ is equivalent to the category of finite-dimensional quiver representations of the Beilinson quiver for $\bP^2$. Set $\alpha_i = [E_i]$. The region $\mathrm{MZ}_{\mathcal{E}}$ bounded by the line segments $l_{\alpha_1 \alpha_1^+}$, $l_{\alpha_1^+ \alpha_2}$, $l_{\alpha_2 \alpha_3^+}$, $l_{\alpha_3^+ \alpha_3}$, and $l_{\alpha_3 \alpha_1}$ lies above $\DLP$.
Given $e \in K(\bP^2)$ with $\ch_0(e) \ge 0$ and a stability condition $\sigma$ to the left of $e$, the line $L_{e\sigma}$ passes through $\mathrm{MZ}_{\mathcal{E}}$ for some exceptional triple $\mathcal{E}$. Let $P = (s,q)$ be a point such that $l_{P\sigma}$ is contained in $\mathrm{Geo}_{\mathrm{DLP}}$.
There is a real number $0 < t < 1$ such that the full subcategory $\Coh_P^{(t,t+1]}$ generated by $\sigma_P$-stable objects with phases in $(t,t+1]$ is equal to $\mathcal{A}_{\mathcal{E}}$. Moreover, $\sigma_P$-(semi)stability is equivalent to King (semi)stability of quiver representations. Suppose $E$ is $\sigma_P$-semistable.
If $L_{Pe}$ is above $\alpha_3$, then $E$ is in $\mathcal{A}_{\mathcal{E}}$, while if $L_{Pe}$ is above $\alpha_1$, then $E[1]$ is in $\mathcal{A}_{\mathcal{E}}$ \cite[Lem. 2.1]{LiZhao19}.
Thus the \emph{Bridgeland moduli space}
\[
    M_{\sigma}(e)
\]
of $\sigma_P$-semistable objects in $\Coh^{\#s}$ with class $e$
can be constructed as a GIT quotient space of semistable quiver representations. We use $M^\stable_{\sigma}(e)$ to denote the locus of $\sigma_P$-stable objects, which is open in $M_{\sigma}(e)$.

The construction is similar for $\ch_0(e)<0$ and $\sigma$ to the right of $e$.
The natural isomorphism
\begin{equation}\label{eq:moduli-nat-iso}
    M_{\sigma_{s,q}}(e) \cong M_{\sigma_{-s,q}}(-\ch_0(e),\ch_1(e),-\ch_2(e)) 
\end{equation}
induced by $E \mapsto \mathcal{R}\lHom(E,\oo)[1] = E^{\vee}[1]$ allows this case to be reduced to the case $\ch_0(e) > 0$ and $s < \chbar_1(e)$ considered above, showing that
$M_{\sigma_{s,q}}(e)$ parametrizes shifted derived duals of objects in $M_{\sigma_{-s,q}}(-\ch_0(e),\ch_1(e),-\ch_2(e))$.

\begin{exmp}\label{exmp:dual-ideal-sheaf} When $e \in K(\bP^2)$ is on or below $\DLP$ and satisfies $\ch_0(e)=-1$, then $(-\ch_0(e),\allowbreak \ch_1(e),\allowbreak -\ch_2(e))$ is the Chern character of the twisted ideal sheaf $I_Z(\ch_1(e))$, where $Z$ is a zero-dimensional subscheme. Thus, for $\sigma_{s,q}$ to the right of $e$, $\sigma_{-s,q}$-stability of $I_Z(\ch_1(e))$ is equivalent to $\sigma_{s,q}$-stability of the shifted derived dual $I^{\vee}_Z(-\ch_1(e))[1]$, which is a two-term complex whose cohomology sheaves are $\cH^{-1} = \oo(-\ch_1(e))$ and $\cH^0 = \oo_Z$.
\end{exmp}

\subsection{Wall-crossing}

For the rest of this section, we assume $e \in K(\bP^2)$ is on or below $\DLP$ and consider stability conditions $\sigma$ above $\Delta_0$. The vertical line through $e$ divides the region above $\Delta_0$ into two subsets: if $\ch_0(e) \ge 0$, then we consider $\sigma$ left of the line; if $\ch_0(e) < 0$, we consider $\sigma$ right of the line.
In light of Lemma \ref{lem:no-change-along-wall}, we say that a non-vertical line $L$ through $e$ is a \emph{wall} for $e$ if $M_{\sigma}(e)$ changes as $\sigma$ crosses $L$. There are only finitely many walls for $e$, which together with the vertical line through $e$ divide the region above $\Delta_0$ into subsets called chambers \cite{Maciocia14}. Given two walls $L_1$ and $L_2$ for $e$, we say $L_1$ is \emph{above} $L_2$ (and $L_2$ is \emph{below} $L_1$) if the segment of $L_1$ with endpoints $L_1 \cap \Delta_0$ lies above the segment of $L_2$ with endpoints $L_2 \cap \Delta_0$.

We now describe how the moduli space changes at a wall. Suppose $\sigma$ lies on a wall
for $e$ (except the last wall, as discussed below) and let $\sigma_+$ and $\sigma_-$ denote stability conditions in the chambers above and below that wall. There are natural birational morphisms
\[\xymatrix{
     M_{\sigma_-}(e) \ar[dr]_{p_{\sigma_-}} && M_{\sigma_+}(e) \ar[dl]^{p_{\sigma_+}} \\
     & M_{\sigma}(e)
}\]
Note that the exceptional loci of $p_{\sigma_+}$ and $p_{\sigma_-}$ have the same image in $M_{\sigma}(e)$. The proof of this fact is similar to that of \cite[(3.3) Thm.]{Tha96}, via reducing to the local case where the group action is by a torus and then using \cite[(1.4) Prop.]{Tha96}. For variations of GIT, see also \cite{DolHu98}.
In the case $\ch_0(e) \ge 0$, for $\sigma=\sigma_{s,q}$ with $q$ sufficiently large, $M_{\sigma}(e)=M(e)$.
For $\sigma$ in each chamber, as described above, $M_{\sigma}(e)$ is birational to $M(e)$.
In cases when $M(e)$ is smooth, varying $\sigma$ induces the minimal model program of $M(e)$ \cite[Thm. 2.24]{LiZhao19}.

The \emph{last wall} for $e$ is the wall $L_e^{\mathrm{last}}=L_{\sigma e}$ such that $M_{\sigma_-}(e)$ is empty, which lies below all other walls for $e$. When $L_{\sigma e}$ is the last wall, $p_{\sigma_+} \colon M_{\sigma_+}(e) \to M_{\sigma}(e)$ is a fibration and may not be birational, and every object in $M_{\sigma}(e)$ is strictly $\sigma$-semistable.
For $\sigma$ above the last wall, $M^\stable_{\sigma}(e)$ is non-empty and dense in $M_{\sigma}(e)$.

The last wall can be computed as follows. There is a unique line $\tilde{L}_e$ through $e$ such that the two points $\tilde{L}_e \cap \Delta_{1/2}$ have $\chbar_1$-distance (the difference of $\chbar_1$-values) equal to 3.
There is a unique exceptional class $\gamma$
such that the right intersection point of $\tilde{L}_e$ and $\Delta_{1/2}$ lies between $\gamma^l$ and $\gamma^r$.
According to \cite[Thm. A \& Defn. 3.11]{LiZhao19}, the line $L_e^{\mathrm{last}}$ is then determined by three cases depending on whether $e$ is above, below, or on the line $L_{\gamma(-3)^+\gamma^+ }$ given by the equation $\chi(\gamma,-)=0$:
\begin{enumerate}[1)]
\item If $\chi(\gamma,e) > 0$, then $L_e^{\mathrm{last}}=L_{\gamma e}$;
    \item If $\chi(\gamma,e) < 0$, then $L_e^{\mathrm{last}}=L_{\gamma(-3) e}$;
    \item If $\chi(\gamma,e) = 0$, then $L_e^{\mathrm{last}}=L_{\gamma(-3)^+\gamma^+ }$.
\end{enumerate}
In all cases, the last wall is on or below $L_{\gamma e}$. See \cite[Fig. 9]{LiZhao19} for a picture of the different cases of the last wall.
Moreover, if $\alpha$ is any exceptional class above $L_e^{\mathrm{last}}$ and left of the vertical line through $e$, then $\chi(\alpha,e)\le 0$ \cite[Prop. 1.30]{LiZhao19}. 

The walls above the last wall are determined as follows. For any $\sigma$ above $L_e^{\mathrm{last}}$ and left of the vertical line through $e$, if $L_{\sigma e}$ is a wall for $e$, then there exists a class $f$ in the interior of $l_{\sigma e}$ such that $\ch_0(f) > 0$
and $f$ and $e-f$ are either proportional to exceptional classes or on or below $\DLP$ (see Lemma \ref{lem:destab-seq} below). This necessary condition for $L_{\sigma e}$ to be a wall can be strengthened to a sufficient condition by including the assumption that $\sigma$ is above the last walls of $f$ and $e-f$ \cite[Thm. 3.16 \& Lem. 3.18]{LiZhao19}.

\begin{rem}\label{rem:right-walls} When $\ch_0(e)<0$, the walls for $e$ contain stability conditions $\sigma$ that lie to the right of the vertical line through $e$, and we call the last wall $L_e^{\text{right-last}}$. By (\ref{eq:moduli-nat-iso}), these walls can be obtained from the walls for $(-\ch_0(e),\ch_1(e),-\ch_2(e))$ by the reflection $(s,q) \mapsto (-s,q)$ on $\bU$.
\end{rem}

The exceptional locus $Z_{\sigma_+}$ of $p_{\sigma_+}$ consists of S-equivalence classes of objects
that are strictly $\sigma$-semistable and $\sigma_-$-unstable. These objects can be described precisely as follows.

\begin{lem}\label{lem:destab-seq} Suppose $E \in M_{\sigma_+}(e)$ is $\sigma_-$-unstable. Then there is a class $f$ on the interior of $l_{\sigma e}$ and a non-split short exact sequence
\begin{equation}\label{eq:ses-destab}
    0 \to F \to E \to G \to 0
\end{equation}
in $\Coh^{\#\sigma}$ with $F \in M_{\sigma}^s(f)$ and $G \in M_\sigma(e-f)$. 
Moreover, $\ext^1(G,F) > 0$ and $\ext^i(G,F)=0$ for all $i \ne 1$.
\end{lem}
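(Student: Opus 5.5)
The plan is to take the Harder--Narasimhan filtration of $E$ with respect to $\sigma_-$ and extract from it a short exact sequence of the required kind. Since $E$ is $\sigma_+$-semistable and $\sigma_-$-unstable, and $\sigma$ lies on a wall, $E$ is strictly $\sigma$-semistable. This follows by openness of semistability and Lemma \ref{lem:no-change-along-wall}: semistability is preserved along $L_{\sigma e}$ near $\sigma$, and $\sigma$ is a limit of the $\sigma_+$.

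Next, choose a $\sigma$-stable subobject $F\subset E$ in $\Coh^{\#\sigma}$ with $\phi_\sigma(F)=\phi_\sigma(E)$. I would take the first $\sigma_-$-HN factor, or more precisely a $\sigma$-stable subobject of it. Since $\phi_{\sigma_-}(F)>\phi_{\sigma_-}(E)$ while the phases agree at $\sigma$, Lemma \ref{lem:same-slope} places $f=[F]$ on $L_{\sigma e}$ on the appropriate side. Then $G=E/F$ is $\sigma$-semistable of the same phase, being a quotient of a semistable object by a subobject of equal phase. Both $F$ and $G$ are nonzero, so $f$ lies strictly between $\sigma$ and $e$, that is, in the interior of $l_{\sigma e}$. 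Here I would use that $Z_\sigma$ of objects of the same phase add positively along the ray, and that a class equal to $e$ would force $G=0$. To make this precise I would use $\Im Z_\sigma>0$, or the positive real direction when this vanishes.

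For non-splitness, suppose $E\cong F\oplus G$. Then $G$ is a subobject of $E$ with $\phi_{\sigma_+}(G)\le\phi_{\sigma_+}(E)$ by $\sigma_+$-semistability, while $F\subset E$ gives $\phi_{\sigma_+}(F)\le \phi_{\sigma_+}(E)$. Since $f$ and $e-f$ lie on opposite sides of $\sigma_+$ relative to $e$, one of these must fail, a contradiction. The same reasoning handles the choice of $F$: we must pick $F$ so that it destabilizes at $\sigma_-$, and its complement then destabilizes at $\sigma_+$. So the sequence does not split, hence $\ext^1(G,F)>0$.

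For the vanishing statements, $\Hom(G,F)=0$ because $F$ is $\sigma$-stable and $G$ is $\sigma$-semistable of the same phase. A nonzero map $G\to F$ would be surjective, since the image has the same phase and $F$ is stable, and its kernel would be a subobject of $G$ of the same phase. To exclude this, I would choose $F$ as a stable factor that does not occur as a quotient of $G$, for instance by taking $F$ to be the $\sigma$-stable subobject with maximal $\sigma_-$-phase among the Jordan--H\"older factors. If $F$ admitted a surjection from $G$, we would obtain a subobject $F\oplus(\cdots)$ and extract a further direct summand, contradicting the choice. Next, $\Ext^{<0}=0$ because both objects lie in the heart. For $\Ext^2(G,F)\cong\Hom(F,G(-3))^\vee$, I would use that $G(-3)$ has strictly smaller phase; equivalently, work in the quiver heart $\mathcal{A}_{\mathcal{E}}$, where both objects live by the discussion before this lemma, which has global dimension $2$. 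It is cleaner to note that the phase of $G\otimes\oo(-3)$ drops, since $\sigma$ sits above $\Delta_0$ in the geometric region. Finally, $\Ext^{\ge3}=0$ by dimension.

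I expect the main obstacle to be this $\Hom(G,F)=0$ together with the $\Ext^2$ vanishing, that is, making the choice of $F$ carefully enough among Jordan--H\"older factors to force vanishing of $\Hom$.
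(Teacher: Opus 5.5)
Your skeleton matches the paper: a $\sigma$-stable $F\subset E$ of the same $\sigma$-phase, $G=E/F$ automatically $\sigma$-semistable, and non-splitness from $\sigma_+$-semistability. However, the two vanishing statements you flag as the main obstacle are not established, and the tools you propose for them do not work.

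\textbf{The vanishing $\Hom(G,F)=0$.} Choosing $F$ by maximal $\sigma_-$-phase among Jordan--H\"older factors gives nothing. A surjection $G\twoheadrightarrow F$ only produces a filtration $F\subset K\subset E$ with $E/K\cong F$ (think of $E$ with socle $F$ and top $F$). It does not produce a summand $F\oplus(\cdots)$, and no condition phrased via $\sigma_-$-phases alone excludes it. The missing input is $\sigma_+$-semistability of $E$, which you use only for non-splitness. As you note, any nonzero $G\to F$ is surjective. So the composite $E\twoheadrightarrow G\twoheadrightarrow F$ makes $F$ simultaneously a subobject and a quotient of $E$, and $\sigma_+$-semistability forces $\phi_{\sigma_+}(F)=\phi_{\sigma_+}(E)$. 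Then $f$ lies on both $L_{\sigma e}$ and $L_{\sigma_+ e}$, hence is proportional to $e$, contradicting that $f$ is in the interior of $l_{\sigma e}$. This is the paper's argument, and it works for every choice of $F$.

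The same input also fixes the location of $f$ without any HN-factor selection. A $\sigma$-stable subobject of the first $\sigma_-$-HN factor need not satisfy $\phi_{\sigma_-}(F)>\phi_{\sigma_-}(E)$ by itself. But any $\sigma$-stable $F\subset E$ of the same $\sigma$-phase, with $f$ not a multiple of $e$, has $\phi_{\sigma_+}(F)<\phi_{\sigma_+}(E)$, and this places $f$ in the interior of $l_{\sigma e}$.

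\textbf{The vanishing $\Ext^2(G,F)=0$.} Neither suggestion proves it.
\begin{itemize}
\item Global dimension $2$ of $\mathcal{A}_{\mathcal{E}}$ kills only $\Ext^{\ge 3}$, which is how the paper uses it. $\Ext^2$ in $\mathcal{A}_{\mathcal{E}}$ is generally nonzero: for example, $\Ext^2(E_1[2],E_3)=\Hom(E_1,E_3)\neq 0$.
\item The claim that ``the phase of $G(-3)$ drops'' has no content as stated. $G(-3)$ need not lie in $\Coh^{\#s}$ (take $\mu_{\min}(G)\le s+3$), nor be $\sigma$-semistable. It is semistable for the translated condition $\sigma\otimes\oo(-3)$, and comparing phases across different stability conditions is exactly the nontrivial point.
\end{itemize}
The paper instead takes the $\Ext^2$-vanishing for $\sigma$-stable objects from \cite[Lem.~2.9]{LiZhao19}. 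It then passes to the semistable $G$ by induction along a Jordan--H\"older filtration of $G$.

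Similarly, $\Ext^{\ge 3}=0$ is not purely ``by dimension'' in a tilted heart. $\Hom(A,B[3])$ can receive a contribution from $\Ext^2(\cH^{-1}(A),\cH^0(B))$. Here it vanishes via the quiver heart, or via Serre duality together with $\Hom(\cT_s,\cF_{s-3})=0$.
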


\begin{proof}
Since $E$ is $\sigma_+$-semistable and $\sigma_-$-unstable, there is a $\sigma$-stable subobject $F \to E$ with $\ch(F)=f$ on $L_{\sigma e}$. If $\ch_0(e) \ge 0$, then $\ch_0(f) > 0$
and $f$ is left of $e$, as otherwise $F$ would $\sigma_+$-destabilize $E$.
If $\ch_0(e) < 0$, then using (\ref{eq:moduli-nat-iso}), $\ch_0(f) < 0$ and $f$ is right of $e$. In each case, $f$ is on the interior of $l_{\sigma e}$.
The cokernel $G$ of $F \to E$ must also be $\sigma$-semistable, as otherwise $E$ would have a quotient object of smaller phase, which by continuity of phases contradicts the $\sigma_+$-semistability of $E$. Since $E$ is an extension of $\sigma$-semistable objects, it is strictly $\sigma$-semistable.
The vanishing of $\Ext^i(G,F)$ for $i<0$ or $i>2$ follows from the fact that near the wall, the $\sigma$-semistable objects $E$, $F$, and $G$ can be realized as representations of quivers. Namely, since $E$, $F$, and $G$ all lie on $L_{\sigma e}$, these three objects, or their shifts, are all contained in $\mathcal{A}_{\mathcal{E}}$. As $\mathcal{A}_{\mathcal{E}}$ is equivalent to a category of representations of the Beilinson quiver for $\bP^2$, all $\Ext^i$ between objects in the category vanish unless $0 \le i \le 2$.

If there is a nonzero map $G \to F$, then the image $I$ is both a subobject and a quotient-object of $E$, hence $[I]$ must lie on $L_{\sigma e}$. This contradicts the $\sigma_+$-semistability of $E$ unless $[I]$ is a multiple of $e$, but then the fact that $I$ is a subobject of $F$ contradicts the $\sigma$-stability of $F$.
Thus $\Hom(G,F)=0$.

The vanishing of $\Ext^2(G,F)$ follows from this vanishing for $\sigma$-stable objects \cite[Lem. 2.9]{LiZhao19} using the Jordan-H\"{o}lder filtration of $G$. 
Finally, $G$ cannot be a subobject of $E$ due to the $\sigma_+$-semistability of $E$, hence the extension is non-split and thus $\ext^1(G,F)>0$. \end{proof}

An analogous argument yields the following description of the exceptional locus $Z_{\sigma_-}$ of $p_{\sigma_-}$, which contains objects that are $\sigma_+$-unstable.

\begin{lem}\label{lem:new-stab-seq} Suppose $E \in M_{\sigma_-}(e)$ is $\sigma_+$-unstable. Then there is a class $f$ on the interior of $l_{\sigma e}$ and a non-split short exact sequence
\begin{equation}\label{eq:ses-newstable}
    0 \to G \to E \to F \to 0
\end{equation}
in $\Coh^{\#\sigma}$ with $F \in M_{\sigma}^s(f)$ and $G \in M_\sigma(e-f)$. 
Moreover, $\ext^1(F,G) > 0$ and all other $\Ext^i(F,G)$ vanish.
\end{lem}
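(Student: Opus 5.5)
The plan is to mirror the proof of Lemma \ref{lem:destab-seq} with the roles of sub- and quotient objects exchanged. Let $E \in M_{\sigma_-}(e)$ be $\sigma_+$-unstable. Since $E$ is $\sigma_-$-semistable, continuity of phases (Lemma \ref{lem:same-slope}) shows that $E$ is $\sigma$-semistable. Its $\sigma_+$-destabilizing behaviour is visible on the wall: there is a quotient object $E \to F$ of strictly smaller $\sigma_+$-phase. Choosing this quotient to be $\sigma$-stable, for instance the last factor of a $\sigma_+$-Harder--Narasimhan type refinement of a Jordan--H\"older filtration, its class $f$ must lie on $L_{\sigma e}$.

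Next I locate $f$. If $\ch_0(e)\ge 0$, then $\sigma_+$ lies above the wall. A $\sigma$-stable quotient $F$ of $E$ whose class lies on $L_{\sigma e}$ with $\ch_0(f)>0$ and to the left of $e$ would have smaller $\sigma_-$-phase, contradicting $\sigma_-$-semistability. I therefore expect the constraint to come from the kernel $G$ instead. Concretely, $G$ has larger $\sigma_+$-phase, which by Lemma \ref{lem:same-slope} forces $e-f$ to lie on the appropriate side of $e$ along the line. This places $f$ on the interior of $l_{\sigma e}$ with $\ch_0(f)>0$. The case $\ch_0(e)<0$ reduces to this one via the duality \eqref{eq:moduli-nat-iso}. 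The kernel $G$ is then $\sigma$-semistable: otherwise its maximal destabilizing subobject would be a subobject of $E$ with $\sigma$-phase above that of $E$, which contradicts $\sigma$-semistability of $E$.

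The Ext vanishing follows the same pattern as before. $\Ext^i(F,G)=0$ for $i\notin[0,2]$ because $E$, $F$, $G$ (up to shift) lie in the quiver heart $\mathcal{A}_{\mathcal{E}}$. For $\Hom(F,G)=0$, the image $I$ of a nonzero map $F\to G$ is a quotient of the $\sigma$-stable $F$ and a subobject of $E$ (through $G$), so $[I]$ lies on $L_{\sigma e}$. Stability of $F$ then forces $I\cong F$, so $F\hookrightarrow G\hookrightarrow E$ and $F$ is both a sub- and a quotient object of $E$ with class on the $f$-side. This contradicts $\sigma_-$-semistability of $E$, since the subobject $F$ would have larger $\sigma_-$-phase. $\Ext^2(F,G)=0$ follows from \cite[Lem. 2.9]{LiZhao19} applied to $F$ and the Jordan--H\"older factors of $G$. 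Finally, if the sequence split, $F$ would be a subobject of $E$ of larger $\sigma_-$-phase, contradicting $\sigma_-$-semistability. Hence the sequence is non-split and $\ext^1(F,G)>0$.

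The main obstacle is the side bookkeeping: confirming that the $\sigma_+$-destabilizing quotient can be taken $\sigma$-stable, and that its class lies on the open segment $l_{\sigma e}$ rather than beyond $e$. I would settle this by Lemma \ref{lem:same-slope}, comparing the slopes of $L_{\sigma_\pm F}$ and $L_{\sigma_\pm E}$ as $\sigma_\pm$ approaches $\sigma$.
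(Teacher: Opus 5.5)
Your strategy is the paper's own: the paper proves this lemma by saying the argument mirrors Lemma~\ref{lem:destab-seq}. Your arguments for the $\sigma$-semistability of $G$, the Ext vanishing, $\Hom(F,G)=0$, and non-splitness are fine.

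The gap is in the step you flag as the main obstacle, locating $f$, where the phase comparison is backwards. Take $\ch_0(e)\ge 0$ and $\sigma_\pm$ just above and below $\sigma$. By Lemma~\ref{lem:same-slope}, a positive-rank class $f$ on $L_{\sigma e}$ strictly between $\sigma$ and $e$ satisfies
\[
\phi_{\sigma_-}(f)>\phi_{\sigma_-}(e), \qquad \phi_{\sigma_+}(f)<\phi_{\sigma_+}(e).
\]
The reason is that $\sigma_-$ lies below the wall, so the line from $\sigma_-$ to a point of the wall has larger slope the closer that point is to $\sigma$. Hence a quotient $E\to F$ of such a class does not contradict $\sigma_-$-semistability. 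It is precisely the quotient the lemma asserts exists, so your sentence, if true, would refute the statement you are proving.

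That sentence also contradicts your own later paragraphs. There you correctly use that a subobject of class $f$ has larger $\sigma_-$-phase, both for $\Hom(F,G)=0$ and for non-splitness. Your subsequent appeal to $G$ ``forcing $e-f$ to lie on the appropriate side'' is not an argument. As written, the conclusion that $f$ is interior to $l_{\sigma e}$ with $\ch_0(f)>0$ is asserted rather than derived.

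The correct bookkeeping mirrors the paper's phrase ``as otherwise $F$ would $\sigma_+$-destabilize $E$.'' Every class on $L_{\sigma e}$ outside the open segment $l_{\sigma e}$ has smaller $\sigma_-$-phase than $e$. These are the positive-rank classes beyond $e$, the rank-zero classes, and the negative-rank classes, whose reduced points lie left of $\sigma$. A $\sigma$-stable quotient of $E$ of such a class would therefore $\sigma_-$-destabilize $E$. So it is $\sigma_-$-semistability of $E$, applied to the quotient $F$ itself and not to $G$, that confines $f$.

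Classes proportional to $e$ are then excluded by $\phi_{\sigma_+}(F)<\phi_{\sigma_+}(E)$. In fact that inequality, which you built into your choice of $F$, already forces $f$ to be a positive-rank class strictly between $\sigma$ and $e$ by the slope comparison above. With this correction the rest of your proof goes through as in the paper.
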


For classes $f$ as in Lemma \ref{lem:destab-seq}, let
\[
    Z_{f,+} \subset M_{\sigma_+}(e)
\]
denote the locus of S-equivalence classes of objects $E$ admitting short exact sequences (\ref{eq:ses-destab}).
Then the exceptional locus of $p_{\sigma_+}$ is
\[
    Z_{\sigma_+} = \bigcup_{f} Z_{f,+},
\]
where the union is finite and over all classes $f$ as in Lemma \ref{lem:destab-seq}.
Similarly, the exceptional locus of $p_{\sigma_-}$, denoted $Z_{\sigma_-}$, is the union of loci $Z_{f,-} \subset M_{\sigma_-}(e)$
containing those objects admitting extensions (\ref{eq:ses-newstable}). 

In the following, when we describe a wall for $e$ as $L_{fe}$ using a specific class $f$, we mean not just that $f$ lies on the wall but also that $Z_{f,+}$ or $Z_{f,-}$ is non-empty, namely $f$ is as in Lemma \ref{lem:destab-seq} or Lemma \ref{lem:new-stab-seq}.

\begin{lem}\label{lem:loci-codim-estimates} Suppose $L_{fe}$ is a wall for $e$. If $Z_{f,+}$ is non-empty, then it is irreducible and its codimension in $M_{\sigma_+}(e)$ is $\ge -\chi(f,e-f)$. If $Z_{f,-}$ is non-empty, then it is irreducible and its codimension in $M_{\sigma_-}(e)$ is $\ge -\chi(e-f,f)$. Moreover, $-\chi(e-f,f) > -\chi(f,e-f)$.
\end{lem}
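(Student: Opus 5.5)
The plan is to parametrize $Z_{f,+}$ by the extensions in Lemma~\ref{lem:destab-seq} and count dimensions. Every $E\in Z_{f,+}$ sits in a non-split sequence $0\to F\to E\to G\to 0$ with $F\in M^\stable_\sigma(f)$ and $G\in M_\sigma(e-f)$. Consider the incidence space $\mathcal{P}$ of triples $(F,G,[\xi])$ with $[\xi]\in\bP\Ext^1(G,F)$, together with the map $\mathcal{P}\to M_{\sigma_+}(e)$ sending an extension to its middle term; $Z_{f,+}$ is the image.

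For irreducibility, first note that $M_\sigma^\stable(f)$ and $M_\sigma(e-f)$ are irreducible. Since $\sigma$ lies on a wall above the last walls of $f$ and $e-f$, they are birational to GIT quotients of quiver representations for the Beilinson quiver; alternatively, use the Li--Zhao irreducibility of Bridgeland moduli spaces over $\bP^2$. Next, Lemma~\ref{lem:destab-seq} gives $\Hom(G,F)=\Ext^2(G,F)=0$, so $\ext^1(G,F)=-\chi(g,f)$ with $g=e-f$ is constant. Hence $\mathcal{P}$ is a projective bundle over (an open subset of) $M^\stable_\sigma(f)\times M_\sigma(g)$, and it is irreducible. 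Its image $Z_{f,+}$ is therefore irreducible. One point needs care here: for strictly semistable $G$ one should use the S-equivalence classes, or the stack, together with the Jordan--H\"older filtration. I would handle this by working with the stacks and noting that the map to $M_{\sigma_+}(e)$ is dominant onto $Z_{f,+}$ by the lemma.

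For the codimension, I would estimate fiber dimensions. If $E$ is $\sigma_+$-stable, its $\sigma$-Harder--Narasimhan-type sub $F$ is determined up to finitely many choices, since $F$ is the unique destabilizing stable subobject of class $f$ on the wall, arising from the $\sigma_-$ HN filtration. So $\dim Z_{f,+}\le \dim\mathcal{P}-1$, where the $-1$ accounts for automorphisms, or more precisely $\dim Z_{f,+}\le \dim M_\sigma(f)+\dim M_\sigma(g)+\ext^1(G,F)-1$. Using $\dim M_\sigma(f)\le 1-\chi(f,f)$ and $\dim M_\sigma(g)\le 1-\chi(g,g)$, which follow from the vanishing of $\Ext^2$ between $\sigma$-stable objects of the same phase, I get
\[
\dim Z_{f,+}\le 1-\chi(f,f)-\chi(g,g)-\chi(g,f).
\]
Since $\dim M_{\sigma_+}(e)=1-\chi(e,e)=1-\chi(f,f)-\chi(g,g)-\chi(f,g)-\chi(g,f)$, this gives codimension at least $-\chi(f,g)$, as claimed. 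The $Z_{f,-}$ case is identical with the roles of $f$ and $g$ swapped, using Lemma~\ref{lem:new-stab-seq}, and yields $-\chi(g,f)$.

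For the final inequality, note that $\chi(f,g)-\chi(g,f)$ is antisymmetric and on $\bP^2$ equals $3(r(f)c_1(g)-r(g)c_1(f))$. This quantity is negative when the slope ordering from $f$ lying left of $e$ on the wall gives $\mu(f)<\mu(e)<\mu(g)$ with $r(f)>0$. The sign conventions need checking in the cases $\ch_0(g)\le 0$ and $\ch_0(e)<0$, where the latter is reduced via \eqref{eq:moduli-nat-iso}. Since $\chi(f,g)-\chi(g,f)=3\,(\text{wedge of } (r,c_1))$ and $f$ lies strictly inside $l_{\sigma e}$ with positive rank to the left, the wedge has a definite sign. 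The hardest step is the rigorous fiber and automorphism accounting when $G$ is only semistable; there I would stratify by the Jordan--H\"older type of $G$ and show that the generic type dominates.
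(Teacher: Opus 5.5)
Your route is the paper's: a projective bundle $\bP\Ext^1(G,F)$ over (an open subset of) $M^\stable_\sigma(f)\times M_\sigma(g)$ that dominates $Z_{f,+}$, the identity $\chi(f,g)+\dim M(e)=(-\chi(g,f)-1)+\dim M(f)+\dim M(g)$, and Serre duality for the last inequality. Two small points on the count. You do not need finiteness of the choice of $F$ to bound the dimension of an image from above. Also, ``$\dim\mathcal{P}-1$'' counts the projectivization twice; your ``more precisely'' bound is the correct one.

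\textbf{The gap: $g=e-f=n\alpha$ with $\alpha$ exceptional and $n\ge 2$.} The lemma covers this case, and the paper treats it separately.
\begin{itemize}
\item Here $G=E_\alpha^{\oplus n}$ and $M_\sigma(g)$ is a point, so your inequality $\dim M_\sigma(g)\le 1-\chi(g,g)=1-n^2$ is false.
\item The direct count uses fiber $\bP\Ext^1(E_\alpha^{\oplus n},F)$, of dimension $-n\chi(\alpha,f)-1$. This only gives codimension $\ge -\chi(f,g)-(n^2-1)$, because extensions differing by the $\GL_n$-action on $E_\alpha^{\oplus n}$ give the same $E$.
\item A Jordan--H\"older stratification with a dominant ``generic type'' cannot fix this, since there is no stable type of class $n\alpha$.
\item The paper instead parametrizes these extensions by $\mathrm{Gr}(n,\Ext^1(E_\alpha,F))$, of dimension $n(-\chi(\alpha,f)-n)$. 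Using this in place of $-\chi(g,f)-1$ gives exactly $-\chi(f,g)$.
\item If you genuinely work with stacks, then $\dim[\mathrm{pt}/\GL_n]=-n^2=-\chi(g,g)$ and your uniform computation does go through. The coarse inequality you wrote does not.
\end{itemize}

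\textbf{The sign in your last step is reversed.} We have
$\chi(f,g)-\chi(g,f)=3\big(r(f)c_1(g)-r(g)c_1(f)\big)=3\big(r(f)c_1(e)-r(e)c_1(f)\big)$,
and this is \emph{positive}, not negative. The reason is that $r(f)>0$ and $f$ lies left of $e$; if $r(e)=0$, use $c_1(e)>0$. This is the paper's identity $\chi(g,f)=\chi(f,g(-3))$.

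Writing the quantity in terms of $e$ rather than $g$ also removes your case split on the sign of $\ch_0(g)$. The case $\ch_0(e)<0$ does reduce via \eqref{eq:moduli-nat-iso}: the map $E\mapsto E^\vee[1]$ reverses the sequence, and $\chi(G^\vee[1],F^\vee[1])=\chi(F,G)$, so both the codimension bound and the inequality transfer.
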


\begin{proof} Assume $g=e-f$ is not a multiple of an exceptional character. A straightforward calculation shows that
\begin{equation}\label{eq:dim-ext1-bundle}
    \chi(f,g) + \dim M(e) = (-\chi(g,f)-1) + \dim M(f) + \dim M(g).
\end{equation}
Assuming $Z_{f,+}$ is non-empty, by Lemma \ref{lem:destab-seq}, there is a non-empty open subset $U \subseteq M_{\sigma}^s(f) \times M_{\sigma}(g)$ parametrizing pairs $(F,G)$ such that $\ext^1(G,F) = -\chi(g,f)$.
Thus there is a projective bundle over $U$ with fibers $\bP\Ext^1(G,F)$. The non-empty open subset of this projective bundle for which the extensions are $\sigma_+$-semistable maps to $M_{\sigma_+}(e)$ with image $Z_{f,+}$. Thus $Z_{f,+}$ is irreducible, and as the right side of (\ref{eq:dim-ext1-bundle}) is the dimension of the projective bundle, $Z_{f,+}$ has codimension $\ge -\chi(f,g)$ in $M_{\sigma_+}(e)$. The argument for the irreducibility and codimension of $Z_{f,-}$ is similar. The last statement follows from
\[
\chi(g,f)=\chi(f,g(-3))=\chi(f,g)-3(\ch_0(f)\ch_1(e)-\ch_0(e)\ch_1(f)) < \chi(f,g).
\]
In the case when $g = n\alpha$ for $n$ a positive integer and $\alpha$ the Chern character of an exceptional bundle $E_\alpha$, the same argument works with $-\chi(g,f)-1$ in (\ref{eq:dim-ext1-bundle}) replaced by $n(-\chi(\alpha,f)-n)$, the dimension of the Grassmannian $\mathrm{Gr}(n,\Ext^1(E_{\alpha},F))$ parametrizing extensions of $E_{\alpha}^n$ by $F$ up to the $\GL_n$-action on $E_{\alpha}^n$.
\end{proof}

\begin{lem}\label{lem:exc-gives-wall} Assume $\ch_0(e) > 0$. Let $\alpha$ be an exceptional class above $L_e^{\mathrm{last}}$ and left of the vertical line through $e$, and suppose $\ch_0(\alpha) \le \ch_0(e)$. Then $L_{\alpha e}$ is a wall for $e$. Moreover, $Z_{\alpha,+}$ is non-empty and its codimension in $M_{\sigma_+}(e)$ is exactly $-\chi(\alpha,e-\alpha)$.
\end{lem}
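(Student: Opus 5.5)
To prove Lemma \ref{lem:exc-gives-wall}, the plan is to build many $\sigma_+$-semistable extensions
\[
0 \to E_\alpha \to E \to G \to 0,
\]
with $G$ a general object of class $g=e-\alpha$. Their locus will give $Z_{\alpha,+}$, and its dimension will follow from the count in Lemma \ref{lem:loci-codim-estimates}.

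\emph{Step 1: existence of $g$ and vanishing of $\chi(g,\alpha)$.}
\begin{itemize}
\item Let $\sigma$ be the point of $L_{\alpha e}$ in the interior of $\Geo_{\mathrm{DLP}}$ where we cross. By Example \ref{ex:exc-stable}, $E_\alpha$ is $\sigma$-stable, since $\sigma$ is left of $\alpha$ on the wall.
\item Next I need $g$ to lie on or below $\DLP$ or be exceptional, with $\sigma$ above $L_g^{\mathrm{last}}$. Since $\alpha$ is above $L_e^{\mathrm{last}}$ and $\ch_0(\alpha)\le \ch_0(e)$, the point $g$ lies on $L_{\alpha e}$ on the far side of $e$ (or at infinity if $\ch_0(\alpha)=\ch_0(e)$).
\item I would try to deduce $g$'s position via an argument like Lemma \ref{lem:stable-classes}, comparing $L_{\alpha e}$ with the segments $l_{\beta^l\beta^+}$. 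This uses the fact from \cite[Prop. 1.30]{LiZhao19} that $\chi(\alpha,e)\le 0$.
\item Since $\chi(\alpha,\alpha)=1$, we get $\chi(\alpha,g)=\chi(\alpha,e)-1<0$. Serre duality together with the slope comparison then controls $\chi(g,\alpha)$.
\end{itemize}

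\emph{Step 2: extensions and the sufficient wall criterion.}
\begin{itemize}
\item By \cite[Thm. 3.16 \& Lem. 3.18]{LiZhao19}, applied with $f=\alpha$, the line $L_{\alpha e}$ is a wall once $\sigma$ is above the last walls of $\alpha$ and $g$.
\item For $\alpha$ this is automatic, since exceptional objects have no walls on this side.
\item For $g$, I would argue that $L_g^{\mathrm{last}}$ lies below $L_{\alpha e}$, using that $L_e^{\mathrm{last}}$ is below $\alpha$ and the recipe via $\tilde L$ and the exceptional class $\gamma$.
\item Then take $G\in M^\stable_\sigma(g)$ general. Lemma \ref{lem:destab-seq}-type vanishings give $\hom(G,E_\alpha)=\ext^2(G,E_\alpha)=0$, so $\ext^1(G,E_\alpha)=-\chi(g,\alpha)>0$.
\item A general nonsplit extension is $\sigma_+$-stable. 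The reason is that $\Hom(E_\alpha,G)$-type obstructions vanish: only sub-objects on the wall could destabilize, and $E_\alpha$ is rigid and stable. So $Z_{\alpha,+}\neq\emptyset$.
\end{itemize}

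\emph{Step 3: exact codimension.}
\begin{itemize}
\item The family is a projective bundle over an open subset of $M_\sigma(g)$, with fibers $\bP\Ext^1(G,E_\alpha)$. Here $M(\alpha)$ is a point.
\item The map to $M_{\sigma_+}(e)$ is generically injective. This is because $E$ determines its destabilizing subobject $E_\alpha$ up to scalar: $\hom(E_\alpha,E)=1$, by $\hom(E_\alpha,G)=0$ from stability on the wall with distinct classes.
\item Hence $\dim Z_{\alpha,+}=\dim M(g)-\chi(g,\alpha)-1$.
\item Formula \eqref{eq:dim-ext1-bundle} with $\dim M(\alpha)=0$ then gives codimension exactly $-\chi(\alpha,g)$. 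If $g$ is a multiple of an exceptional class, I would use the Grassmannian variant instead.
\end{itemize}

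The main obstacle is Step 1–2: verifying that $g$ admits $\sigma$-semistable objects and that $\sigma$ lies above $L_g^{\mathrm{last}}$. I would first try the geometric comparison of slopes with the $\DLP$ segments as in Lemma \ref{lem:stable-classes}.
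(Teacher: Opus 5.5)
Your outline follows the paper's proof closely: Lemma \ref{lem:stable-classes} for $g=e-\alpha$, non-emptiness of $M^\stable_\sigma(g)$ on the wall, $\chi(g,\alpha)<0$ producing non-split $\sigma_+$-stable extensions, then the projective-bundle count using $\hom(E_\alpha,G)=0$. However, the step you flag as the main obstacle is a genuine gap, and everything downstream depends on it. Knowing that $g$ lies on or below $\DLP$ only gives $M(g)\neq\emptyset$. The stability condition $\sigma$ on $L_{\alpha e}=L_{\alpha g}$ could a priori lie below walls for $g$, even below $L^{\mathrm{last}}_g$; then $M^\stable_\sigma(g)$ is empty and there is nothing to extend. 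Saying you would place $L^{\mathrm{last}}_g$ below $L_{\alpha e}$ ``using that $L_e^{\mathrm{last}}$ is below $\alpha$ and the recipe via $\tilde L$ and $\gamma$'' is not yet an argument. It does not follow formally from anything you establish; you would have to run the three-case description of the last wall of $g$, with its own exceptional class $\gamma$, against the position of $\alpha$. The paper does not do this by hand. It invokes \cite[Lem. 3.13]{LiZhao19}, which states exactly that $L_{\alpha e}$ lies above the last wall for $e-\alpha$, so $M^\stable_\sigma(e-\alpha)\neq\emptyset$. You need that input, or a proof of it, to close Step 2. Once you have it, the appeal to \cite[Thm. 3.16 \& Lem. 3.18]{LiZhao19} is redundant, since the explicit extensions already exhibit the wall.

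The rest is sound and in places simpler than the paper.
\begin{itemize}
\item \textbf{Step 1.} $\chi(\alpha,e)\le 0$ puts $e$ on or below the line $\chi(\alpha,-)=0$, which passes through $\alpha^+$ (directly below $\alpha$) with slope $\mu(\alpha)-3/2$. Hence $L_{\alpha e}$ has slope $<\mu(\alpha)-3/2<\mu(e)-3/2$, and Lemma \ref{lem:stable-classes} applies as stated. This is a valid alternative to the paper's comparison with $\tilde L_e$.
\item \textbf{Step 3.} Schur's lemma for $\sigma$-stable objects of equal phase and different classes gives $\hom(E_\alpha,G)=0$ for every $G\in M^\stable_\sigma(g)$. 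This replaces the paper's appeal to \cite[Thm. 1.6]{CosHuiKop21} and yields generic injectivity rather than mere finiteness.
\item \textbf{$\sigma_+$-stability.} Your justification points at the wrong vanishing. A $\sigma_+$-destabilizing subobject of $E$ must be $\sigma$-semistable of the same phase, with Jordan--H\"older factors a proper part of $\{E_\alpha,G\}$. So it is $E_\alpha$, which has smaller $\sigma_+$-phase because $\alpha$ is left of $e$, or it is $G$. The latter would split the sequence because $\Hom(G,E_\alpha)=0$ and $\Hom(G,G)=\bC$. Thus every non-split extension is $\sigma_+$-stable, which is what \cite[Lem. 3.1]{LiZhao19} supplies in the paper; rigidity of $E_\alpha$ and $\Hom(E_\alpha,G)$ play no role here.
\item \textbf{Grassmannian variant.} This is unnecessary, since Lemma \ref{lem:stable-classes} places $g$ on or below $\DLP$, so $g$ is never a multiple of an exceptional class.
\end{itemize}
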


\begin{proof} 
Since $\ch_0(\alpha) \le \ch_0(e)$ and $\chbar_1(\alpha)<\chbar_1(e)$, $\ch_0(e-\alpha) \ge 0$ and $e-\alpha$ lies to the right of $e$. Since $\alpha$ is above $L_e^{\mathrm{last}}$, it is also above $\tilde{L}_e$, which has slope $\mu(e)-\sqrt{5/4+2\Delta(e)}$.
Since $\Delta(e)>1/2$, this slope is $<\mu(e)-3/2$. Thus Lemma \ref{lem:stable-classes} applies and ensures that $e-\alpha$ lies on or below $\DLP$.

By \cite[Lem. 3.13]{LiZhao19},
$L_{\alpha e}$ is above the last wall for $e-\alpha$, so letting $\sigma$ denote a stability condition on $L_{\alpha e}$, $M_{\sigma}^s(e-\alpha)$ is non-empty. Moreover, $\chi(e-\alpha,\alpha)=\chi(e,\alpha)-\chi(\alpha,\alpha)<\chi(\alpha,e)-\chi(\alpha,\alpha) < 0$,
so for $G \in M_{\sigma}^s(e-\alpha)$, general extensions of $G$ by $E_\alpha$ are $\sigma_+$-stable \cite[Lem. 3.1]{LiZhao19}.
Thus $Z_{\alpha,+}$ is non-empty. Since $\chi(\alpha,e-\alpha)=\chi(\alpha,e)-\chi(\alpha,\alpha)<0$, general $G \in M_{\sigma}^s(e-\alpha)$ satisfy $\Hom(E_\alpha,G)=0$ \cite[Thm. 1.6]{CosHuiKop21},
so the map from the projective bundle over $M_{\sigma}^s(e-\alpha)$ with fibers $\Ext^1(G,E_\alpha)$ to $M_{\sigma}(e)$ is generically finite, giving the result about the codimension of $Z_{\alpha,+}$.
\end{proof}

Suppose $Z_{f,+}$ is non-empty. In order to compare exceptional loci from different walls, for any stability condition $\sigma'$ above the wall $L_{fe}=L_{\sigma e}$, we define a locus $Z_{f,\sigma'} \subset M_{\sigma'}(e)$ that loosely speaking is the intersection of $Z_{f,+}$ with $M_{\sigma'}(e)$. The rigorous definition is inductive.
Suppose $L_{\sigma_1 e}$ is the closest wall above $L_{\sigma e}$. Then $M_{{\sigma_1}_-}(e) = M_{\sigma_+}(e)$, so we let $Z_{f,{\sigma_1}_-} = Z_{f,+}$. Define
\begin{equation}\label{eq:induced-locus-wall}
    Z_{f,\sigma_1} = p_{{\sigma_1}_-}(Z_{f,{\sigma_1}_-} \setminus Z_{{\sigma_1}_-}) \subset M_{\sigma_1}(e),
\end{equation}
and let $Z_{f,{\sigma_1}_+} = p_{{\sigma_1}_+}^{-1}(Z_{f,\sigma_1})$. Now repeat this construction for the second wall above $L_{\sigma e}$, using $Z_{f,{\sigma_1}_+}$ in place of $Z_{f,+}$, and so on for each prior wall. Denote the locus in $M(e)$ obtained in this way by
\[
    Z_f \subset M(e),
\]
which is the locus of S-equivalence classes of semistable sheaves $E$ that admit a map $F \to E$ as in (\ref{eq:ses-destab})
and are not destabilized at any wall above $L_{fe}$.

Note that each of these loci $Z_{f,\sigma'}$ are locally closed, irreducible, and either empty or of the same dimension as $Z_{f,+}$.
From (\ref{eq:induced-locus-wall}), we see that $Z_{f,\sigma'}$ is empty if and only if there is a wall $\sigma_j$ between $\sigma$ and $\sigma'$ such that $Z_{f,{\sigma_j}_-}$ is contained in the exceptional locus $Z_{{\sigma_j}_-}$. Note also that each $Z_{f,\sigma'}$ is isomorphic to an open subscheme of $Z_{f,+}$ since the definition avoids the exceptional loci at the walls above $L_{fe}$.

We conclude the section with two results about walls for $e$ of small rank that will be useful in the next section.

\begin{lem}\label{lem:last-wall-bound-rk1} Suppose $e$ is a class on or below the Dr\'ezet--Le Potier curve $\DLP$ with $\ch_0(e)=1$. Then the $\chbar_1$-distance between the two points $L_e^{\mathrm{last}} \cap \Delta_0$ is $\le 3$.
\end{lem}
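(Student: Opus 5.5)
The plan is to argue by contradiction, using the fact that the last wall lies below every other wall for $e$, together with the walls $L_{\oo(j)e}$ supplied by Lemma \ref{lem:exc-gives-wall}. Write $\ch(e)=(1,c,c^2/2-n)$, so that $\mu(e)=c$ and $\Delta(e)=n$. If $e$ is the class of a line bundle, $e$ is exceptional and the statement is vacuous or trivial, so I assume $e$ is the class of $I_{Z_n}(c)$ with $n\ge 1$.

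\emph{Preliminaries.} For a line $L$ through $e$ with slope $m<\mu(e)$, intersecting it with $\Delta_0$ gives the chord length
\[
d_0(L)^2=4\bigl((\mu(e)-m)^2-2\Delta(e)\bigr).
\]
Let $l<r$ be the $\chbar_1$-coordinates of the two intersection points. Two facts about such lines are needed.
\begin{itemize}
\item A line through $e$ is lower (in the sense of the ordering of walls) exactly when its left intersection point $l$ is further to the right, equivalently when its slope is closer to $\mu(e)$.
\item The chord lies to the left of the vertical line through $e$, i.e. $r<\mu(e)$. This holds because the relevant stability conditions lie left of $e$ and above $\Delta_0$.
\end{itemize}
I would record both as elementary observations on the parabola.

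\emph{Main argument.} Suppose the chord of $L_e^{\mathrm{last}}$ on $\Delta_0$ has length $r-l>3$. Pick the integer $j$ with $l<j\le l+1$. Then
\[
j\le l+1<r-2<\mu(e),
\]
so $\oo(j)$ lies on $\Delta_0$ strictly between the two intersection points. Hence $\oo(j)$ is strictly above $L_e^{\mathrm{last}}$ and left of the vertical line through $e$. Since $\ch_0(\oo(j))=1\le \ch_0(e)$, Lemma \ref{lem:exc-gives-wall} shows that $L_{\oo(j)e}$ is a wall for $e$, with $Z_{\oo(j),+}$ non-empty.

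Next I show that $\oo(j)$ is the \emph{left} intersection point of $L_{\oo(j)e}$ with $\Delta_0$. By the identity used in the proof of Lemma \ref{lem:chi-bound-large-rank}, the other intersection point is
\[
a=\mu(e)-\frac{2\Delta(e)}{\mu(e)-j}.
\]
The claim is $j<a$, i.e. $(\mu-j)^2>2n$. This follows by comparing with the last wall: the line through $e$ and the point of $\Delta_0$ at $s=j$ rotates monotonically as $j$ moves from $l$ toward $\mu$, and it becomes tangent only where $(\mu-j)^2=2n$. Since $l<j<r$, and $r$ lies on the same branch as $l$ relative to that tangency, we get $(\mu-j)^2>2n$. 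I expect this to need a short explicit check with the formula $a(j)$.

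Given that $\oo(j)$ is the left intersection point and $j>l$, the monotonicity fact shows that $L_{\oo(j)e}$ lies strictly below $L_e^{\mathrm{last}}$. This contradicts the fact that the last wall lies below all other walls for $e$. Hence $r-l\le 3$.

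\emph{Main obstacle.} The delicate point is the geometric bookkeeping: making sure that $\oo(j)$ is genuinely the left endpoint of its own chord, and that the notion of ``above/below'' for walls matches the ordering of the left endpoints. If that comparison fails in some edge case, my fallback is to compare the slopes directly via $d_0^2=4((\mu-m)^2-2\Delta)$. If both arguments fail, I would instead compute the last wall from the explicit cases 1)--3) in \S\ref{sec:bridgland}, using that it lies on or below $L_{\gamma e}$ and that $\gamma$ is pinned down by $\tilde L_e$, whose $\Delta_0$-chord has length $\sqrt5<3$ (since $d_0^2=d_{1/2}^2-4$).
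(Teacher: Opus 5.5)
Your main argument fails at the step ``Hence $\oo(j)$ is strictly above $L_e^{\mathrm{last}}$'', because the orientation is reversed. $\Delta_0$ is the graph of the convex function $q=s^2/2$. A line meeting it at $s=l<r$ therefore lies strictly above $\Delta_0$ over $l<s<r$, so every $\oo(j)$ with $l<j<r$ is strictly \emph{below} $L_e^{\mathrm{last}}$. Lemma~\ref{lem:exc-gives-wall} then says nothing about it. Your later (correct) computation that $L_{\oo(j)e}$ lies below $L_e^{\mathrm{last}}$ just restates this; it is not a contradiction.

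The argument would in fact prove too much. For $\ch(e)=(1,0,-11)$ one has $\gamma=\oo(-3)$ and $\chi(\gamma,e)=-1$, so $L_e^{\mathrm{last}}=L_{\oo(-6)e}$. Its $\Delta_0$-chord is $[-6,-11/3]$, and $\oo(-5)$, $\oo(-4)$ sit strictly inside it, harmlessly below the last wall.

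No other choice of $j$ rescues the strategy. If $\oo(j)$ is above $L_e^{\mathrm{last}}$ and left of $e$, then $L_{\oo(j)e}$ has smaller slope and is a \emph{higher} wall. Lemma~\ref{lem:exc-gives-wall} only ever produces walls above the last wall, so it can never bound the height of $L_e^{\mathrm{last}}$ from above. For that you need to know where the last wall actually is.

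The paper gets this from the Li--Zhao description in cases 1)--3), comparing each case with $L_{\gamma(-3)\gamma}$. That line has $\Delta_0$-chord of length $\le 3$, because $\gamma(-3)$ and $\gamma$ lie on or below $\Delta_0$ at $\chbar_1$-distance $3$.
\begin{enumerate}
\item[1)] $\chi(\gamma,e)$ is a positive integer and $\ch_0(e)=1$, so $\chi(\gamma,e)\ge \ch_0(e)/\ch_0(\gamma)$. Hence $e$ lies on or above the line $\chi(\gamma,-)=\ch_0(-)/\ch_0(\gamma)$, which is $L_{\gamma(-3)\gamma}$. So $L_{\gamma e}$ has larger slope and a shorter chord.
\item[2)] $e$ lies below $L_{\gamma(-3)\gamma}$, so $L_{\gamma(-3)e}$ has a shorter chord.
\item[3)] $L_{\gamma(-3)^+\gamma^+}$ is a downward parallel translate of $L_{\gamma(-3)\gamma}$.
\end{enumerate}

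Your fallback points in this direction, but as sketched it would not work. The $\sqrt5$ chord of $\tilde L_e$ is not an upper bound: for $\ch(e)=(1,0,-2)$ the last wall $L_{\oo(-1)e}$ has chord exactly $3$. The bound ``on or below $L_{\gamma e}$'' is too weak in case 2: for $\ch(e)=(1,0,-11)$, $L_{\oo(-3)e}$ has chord $13/3$. You need the precise last wall in each case, and the hypothesis $\ch_0(e)=1$ is what makes case 1 go through.
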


\begin{proof}
Let $\gamma$ be the corresponding exceptional to $e$. We consider the three cases for $L_e^{\mathrm{last}}$, which correspond to three cases for $\chi(\gamma,e)$.

First, suppose $\chi(\gamma,e) > 0$, so the last wall for $e$ is $L_{\gamma e}$. The inequality $\chi(\gamma,e)>0$ implies that $e$ lies above $L_{\gamma(-3)^+ \gamma^+}$. In fact, since $\chi(\gamma,e)$ is an integer and $\ch_0(e)=1$, $e$ cannot lie below the line $\chi(\gamma,-)=\ch_0(-)/\ch_0(\gamma)$, which coincides with $L_{\gamma(-3)\gamma}$ (so $e$ cannot lie in the shaded region in Figure \ref{fig:strip}).
\begin{figure}[h]
\centering
\begin{tikzpicture}
\pgfmathsetmacro{\sG}{1}
\pgfmathsetmacro{\dd}{1}
\pgfmathsetmacro{\xminn}{-2.6}
\pgfmathsetmacro{\xmaxx}{2.6}
\pgfmathsetmacro{\qG}{\sG*\sG/2}
\pgfmathsetmacro{\sGt}{\sG-3}
\pgfmathsetmacro{\qGt}{\sGt*\sGt/2}
\pgfmathsetmacro{\slope}{(2*\sG-3)/2}
\begin{axis}[
    xmin=\xminn, xmax=\xmaxx,
    ymin=-1.6, ymax=3.6,
    axis lines=middle,
    xtick=\empty, ytick=\empty,
    axis line style={draw=gray!80, line width=0.6pt, ->},
    xlabel={$\chbar_1$},
    ylabel={$\chbar_2$},
    width=11cm, height=7cm,
    clip=false,
    every axis x label/.style={at={(current axis.right of origin)}, anchor=west},
    every axis y label/.style={at={(current axis.above origin)}, anchor=south}
]
    \addplot[draw=none, name path=upperline, domain=\xminn:\xmaxx, samples=2]
        {\qG + \slope*(x-\sG)};
    \addplot[draw=none, name path=lowerline, domain=\xminn:\xmaxx, samples=2]
        {\qG - \dd + \slope*(x-\sG)};
    \addplot[gray!40, opacity=0.5] fill between[of=upperline and lowerline];

    \addplot[domain=-2.68:2.68, samples=120, black, line width=0.8pt,
             restrict y to domain=-1.6:3.6] {x^2/2}
        node[pos=0.95, anchor=south east] {$\Delta_0$};

    \addplot[domain=\xminn:\xmaxx, samples=2, black, dashed, line width=0.9pt]
        {\qG + \slope*(x-\sG)};
    \addplot[domain=\xminn:\xmaxx, samples=2, black, dashed, line width=0.9pt]
        {\qG - \dd + \slope*(x-\sG)};

    \node[circle, fill=black, inner sep=1.2pt,
          label={[label distance=-2pt]above right:$\gamma(-3)$}]
        at (axis cs:\sGt,\qGt) {};
    \node[circle, fill=black, inner sep=1.2pt,
          label={[label distance=-2pt]below:$\gamma(-3)^+$}]
        at (axis cs:\sGt,{\qGt-\dd}) {};
    \node[circle, fill=black, inner sep=1.2pt,
          label={[label distance=-2pt]above:$\gamma$}]
        at (axis cs:\sG,\qG) {};
    \node[circle, fill=black, inner sep=1.2pt,
          label={[label distance=-2pt]below:$\gamma^+$}]
        at (axis cs:\sG,{\qG-\dd}) {};

    \node[anchor=south west, font=\footnotesize]
        at (axis cs:1.7,{\qG + \slope*(1.7-\sG)})
        {$\chi(\gamma,-)=\ch_0(-)/\ch_0(\gamma)$};
    \node[anchor=north east, font=\footnotesize]
        at (axis cs:0.2,{\qG - \dd + \slope*(0.2-\sG) - 0.15})
        {$\chi(\gamma,-)=0$};
\end{axis}
\end{tikzpicture}
\caption{$e$ does not lie inside the shaded strip between
$L_{\gamma(-3)\gamma}$ and $L_{\gamma(-3)^+\gamma^+}$}
\label{fig:strip}
\end{figure}
Note that
$L_{\gamma(-3)\gamma} \cap \Delta_{0}$ has $\chbar_1$-distance $\le 3$ since $\gamma$ is on or below $\Delta_0$. Since $e$ is on or above $L_{\gamma(-3)\gamma}$, $L_{\gamma e}$ has larger slope than $L_{\gamma(-3)\gamma}$,
hence $L_{\gamma e} \cap \Delta_0$ has $\chbar_1$-distance no larger than the $\chbar_1$-distance of $L_{\gamma(-3)\gamma} \cap \Delta_0$, giving the result.

Second, suppose $\chi(\gamma,e) < 0$, so that the last wall is $L_{\gamma(-3)e}$. Since $e$ is below the line $L_{\gamma(-3)\gamma}$, it follows that $L_{\gamma(-3)e} \cap \Delta_0$ has smaller $\chbar_1$-distance than $L_{\gamma(-3)\gamma} \cap \Delta_0$, giving the result.  

Finally, suppose $\chi(\gamma,e)=0$, so that the last wall is $L_{\gamma(-3)^+\gamma^+}$. This line is below $L_{\gamma(-3)\gamma}$, so $L_{\gamma(-3)^+\gamma^+} \cap \Delta_0$ has smaller $\chbar_1$-distance than $L_{\gamma(-3) \gamma} \cap \Delta_0$, giving the result.
\end{proof}

When $Z_{f,+}$ is non-empty and $\ch_0(f) > \ch_0(e)$, then $L_{fe}$ is called a \emph{higher-rank wall}. The fact that $e-f$ has negative rank and must lie on or below $\Delta_0$ imposes a useful bound on the higher-rank walls.
The lemma below provides the bound in the case when $\ch_0(e)=0$; note that in this case, every wall is a higher-rank wall.
This bound also appeared in the unpublished work \cite[Lem. 7.3]{Woo1305}.

\begin{lem}\label{lem:rk0-wall-bound} Let $e$ be a Chern character with $\ch_0(e)=0$ and $\ch_1(e)>0$. Let $L_{fe}$ be a wall for $e$. Then the $\chbar_1$-coordinate of the right intersection point of $L_{fe}$ with $\Delta_0$ is less than or equal to
\begin{equation}\label{eq:rk0-bound-walls}
    \frac{\ch_1(e)}{2} + \frac{\ch_2(e)}{\ch_1(e)}.
\end{equation}
\end{lem}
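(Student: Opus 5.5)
The plan is to read the bound as a statement about the chord that the wall cuts out of $\Delta_0$. Set $m=\ch_2(e)/\ch_1(e)$. Since $\ch_0(e)=0$, the point $e$ lies on the line at infinity, so every wall $L_{fe}$ is a line of slope $m$ in $\bU$. Let $s_1<s_2$ be the $\chbar_1$-coordinates of the two points of $L_{fe}\cap\Delta_0$. Because $\Delta_0$ is the parabola $q=s^2/2$, we have $s_1+s_2=2m$. The inequality to prove, $s_2\le \ch_1(e)/2+m$, is therefore equivalent to
\[
s_2-s_1\le \ch_1(e).
\]
I will bound the chord length $s_2-s_1$ by the horizontal distance between the reduced classes of the two factors in a destabilizing sequence.

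\textbf{Step 1: set up the destabilizing sequence.} Pick $\sigma=\sigma_{s,q}$ on the wall, above $\Delta_0$. Since $\sigma$ is strictly above $\Delta_0$ on this line, its horizontal coordinate satisfies $s_1<s<s_2$. By Lemma \ref{lem:destab-seq} (or Lemma \ref{lem:new-stab-seq}), there is a sequence relating $F$ and $G$ in $\Coh^{\#s}$, with $\ch(F)=f$ and $\ch(G)=g=e-f$. Here $r_f:=\ch_0(f)>0$, so $\ch_0(g)=-r_f<0$. Both $F$ and $G$ are $\sigma$-semistable and lie on $L_{\sigma e}$. By Remark \ref{rem:bridgeland-lpcurve}, each of $f$ and $g$ is on or below $\DLP$ (or proportional to an exceptional class), so the reduced points of $f$ and $g$ lie on or below $\Delta_0$.

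\textbf{Step 2: locate $f$ and $g$ on the line.} On $L_{fe}$, the points on or below $\Delta_0$ are exactly those with $\chbar_1\le s_1$ or $\chbar_1\ge s_2$. Any nonzero object of $\Coh^{\#s}$ has $\operatorname{Im} Z_{s,q}=\ch_1-s\ch_0\ge 0$, and equality is impossible for these objects since they are not on the vertical line.
\begin{itemize}
\item For $F$ with positive rank this gives $\chbar_1(f)>s>s_1$, hence $\chbar_1(f)\ge s_2$.
\item For $G$ with negative rank it gives $\chbar_1(g)<s<s_2$, hence $\chbar_1(g)\le s_1$.
\end{itemize}
Dividing by the rank, with the sign handled correctly for $G$, is the point where care is needed.

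\textbf{Step 3: compute the horizontal gap.} Directly,
\[
\chbar_1(f)-\chbar_1(g)=\frac{\ch_1(f)}{r_f}-\frac{\ch_1(e)-\ch_1(f)}{-r_f}=\frac{\ch_1(e)}{r_f}\le \ch_1(e),
\]
using $r_f\ge 1$. Combining this with Step 2 gives $s_2-s_1\le \chbar_1(f)-\chbar_1(g)\le \ch_1(e)$, which is the claim.

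\textbf{Main obstacle.} I expect Step 2 to be the only delicate point: one has to justify that $f$ and $g$ fall on opposite sides of the chord rather than on the same side. This is where the heart condition $\operatorname{Im}Z>0$ is used, together with the strict inequalities $s_1<s<s_2$ coming from $\sigma$ lying above $\Delta_0$. I also need to confirm that $\ch_0(f)>0$ holds in both the $Z_{f,+}$ and $Z_{f,-}$ cases. This follows from the proof of Lemma \ref{lem:destab-seq} applied with $\ch_0(e)\ge 0$; if $f$ instead had nonpositive rank, I would exchange the roles of $f$ and $g$ and rerun the same argument.
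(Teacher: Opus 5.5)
Your argument is correct and is essentially the paper's proof. The paper likewise uses three facts:
\begin{itemize}
\item the two points of $L_{fe}\cap\Delta_0$ have $\chbar_1$-coordinates $a$ and $2\ch_2(e)/\ch_1(e)-a$;
\item $\chbar_1(f)\ge a$ and $\chbar_1(e-f)\le 2\ch_2(e)/\ch_1(e)-a$;
\item $\chbar_1(f)-\chbar_1(e-f)=\ch_1(e)/\ch_0(f)$.
\end{itemize}
From these it obtains the slightly sharper intermediate bound $\ch_1(e)/(2\ch_0(f))+\ch_2(e)/\ch_1(e)$. Your justification, via $\operatorname{Im}Z_{s,q}\ge 0$, of which side of the chord $f$ and $e-f$ fall on is a detail the paper leaves implicit; the non-strict inequality already suffices there.
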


\begin{proof} Suppose $L_{fe}$ is a wall for $e$ and write its equation as $q-a^2/2 = (\ch_2(e)/\ch_1(e))(s-a)$, where $(a,a^2/2)$ is the right intersection point of the line with $\Delta_0$. The left intersection point $Q$ of $L_{fe}$ and $\Delta_0$ satisfies $\chbar_1(Q) = 2\ch_2(e)/\ch_1(e)-a$. Since $\ch_0(f)>0$, $\ch_0(e-f)<0$, and $f$ and $e-f$ must lie on or below $\Delta_0$, we have $\chbar_1(f) \ge a$ and $\chbar_1(e-f) \le \chbar_1(Q)$. Combining these two inequalities yields
\[
    a \le \frac{\ch_1(e)}{2\ch_0(f)} + \frac{\ch_2(e)}{\ch_1(e)}\leq \frac{\ch_1(e)}{2} + \frac{\ch_2(e)}{\ch_1(e)}.
\]
\end{proof}

\subsection{Smaller Segre strata on $\bP^2$}\label{sect:smaller-strata}

The main goal of this subsection is to complete Step 1 of the proof of Theorem \ref{thm:O(k)-Segre-strata}.
We prove that the $\oo(k)$-Segre stratum in $M(e)$ contains a unique component of maximum dimension whose general member is a general extension \eqref{eq:ext-small-strata}. The key idea is to consider Bridgeland wall crossings for $M(e)$ up to the wall $L_{\oo(k)e}$. The $\oo(k)$-Segre stratum is the union of its intersections with loci $Z_f$ for classes $f$ on these walls. We will show that $Z_{\oo(k)}$, which is contained in the $\oo(k)$-Segre stratum, provides the largest contribution to the stratum and that the general sheaf in $Z_{\oo(k)}$ arises as an extension (\ref{eq:ext-small-strata}). The argument relies on key results bounding the dimension of $Z_f$ (Lemmas \ref{lem:chi-bound}, \ref{lem:chi-bound-large-rank}, and \ref{lem:codimension-estimates-r=r'}).

We first execute this strategy to prove an analogous result for the rank-0 case, which will be used in the proof of the theorem.
In the case where $\ch_0(e)=0$, write $\ch(e)=(0,c,d)$, where $c>0$.
Then $\chi(e(-k))=\frac{3}{2}c+d-ck$, so $k_e = \lceil d/c + 1/2 \rceil$.

\begin{prop}\label{prop:O(k)-segre-strata-rk0} For the rank-0 case $\ch(e)=(0,c,d)$ with $c>0$, the $\oo(k)$-Segre stratum in $M(e)$ is non-empty if and only if $k_e \le k \le c/2+d/c$. If these conditions on $k$ hold, then the $\oo(k)$-Segre stratum has a unique component of largest dimension. The general sheaf in this component is stable and arises as a general extension
\[
    0 \to \oo_C(k) \to E \to \oo_Z \to 0,
\]
where $C$ is a smooth curve of degree $c$ and $Z$ is a general collection of $d+c^2/2-kc$ points on $C$.
If $k=k_e$, then this component is open and dense in $M(e)$, while otherwise it has codimension $1-\chi(e(-k))$ in $M(e)$.
\end{prop}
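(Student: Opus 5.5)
We will follow the strategy described at the start of \S\ref{sect:smaller-strata}: run Bridgeland wall-crossing for $M(e)$ from the large volume limit down to the wall $L_{\oo(k)e}$, and compare the loci $Z_f$ arising on these walls. Since $\ch_0(e)=0$, every wall is a higher-rank wall. Note that $\oo(k)$ lies to the left of $e$ on $\Delta_0$, and $L_{\oo(k)e}$ has slope $d/c$. Its right intersection point with $\Delta_0$ is at $\chbar_1=2d/c-k$, which must lie to the right of $k$.

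\emph{Non-emptiness and the upper bound on $k$.} Suppose $E$ lies in the $\oo(k)$-Segre stratum. Then there is a nonzero map $\oo(k)\to E$ whose image is a pure sheaf $\oo_{C'}(k)$ with $C'$ of degree $\le c$. This means $E$ is $\sigma$-destabilized along $L_{\oo(k)e}$, or more precisely $L_{\oo(k)e}$ lies on or above the last wall. By Lemma~\ref{lem:rk0-wall-bound}, the right intersection point of a wall satisfies $\chbar_1\le c/2+d/c$. Applying this to the wall $L_{\oo(k)e}$ gives $k\le 2d/c-k$ together with an analogous constraint. Alternatively, we can argue directly: a section of $E(-k)$ gives $\oo_C\to E(-k)$ with cokernel of finite length, so $\chi(E(-k))\ge\chi(\oo_C)$. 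This yields $d-ck+c^2/2\ge 0$, i.e.\ $k\le c/2+d/c$. The lower bound $k\ge k_e$ follows as in Lemma~\ref{lem:gen-segre-p2}, since a general sheaf (a line bundle on a smooth curve) has $\oo(k_e)$ as a Segre subsheaf. For existence, take a smooth curve $C$ of degree $c$ and $m=d+c^2/2-kc\ge0$ general points $Z$. A general extension $0\to\oo_C(k)\to E\to\oo_Z\to0$ is a line bundle on $C$, hence stable. We would then check that $h^0(E(-k-1))=0$ for general $Z$ by semicontinuity and degree considerations on $C$; this uses $k\ge k_e$, which gives $\chi(E(-k-1))\le 0$ for $k\geq k_e$.

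\emph{Dimension and uniqueness.} The family of such $E$ has dimension $\dim|\oo(c)|+m=c(c+3)/2+m$, while $\dim M(e)=c^2+1$. A direct computation gives codimension $1-\chi(e(-k))$ when $k>k_e$, and density when $k=k_e$ (in that case $M(e)$ is generically a line bundle on a smooth curve and the statement is classical). For $k>k_e$ we show that every other component is smaller. Every $E$ in the stratum admits $\oo(k)\to E$, and hence lies in $Z_f$ for some wall $L_{fe}$ on or above $L_{\oo(k)e}$. By Lemma~\ref{lem:loci-codim-estimates}, $Z_f$ has codimension at least $-\chi(f,e-f)$. Combining this with Lemma~\ref{lem:chi-bound-large-rank} for $r(e)=0$ gives $-\chi(f,e-f)\ge 1-\chi(e(-k))$, with equality only for $r'=1$ and $f=f_a$. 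The bound applies because $f$ lies in $S_{r',k}(e)$: the wall is above $L_{\oo(k)e}$, $f$ is below $\Delta_0$, and it is bounded on the right by Lemma~\ref{lem:rk0-wall-bound}. We still have to account for loci $Z_f$ of walls strictly above $L_{\oo(k)e}$ that happen to admit maps from $\oo(k)$. For these, we expect a correction term similar to Lemma~\ref{lem:codimension-estimates-r=r'}(b): the condition $\hom(\oo(k),E)>0$ forces $\hom(\oo(k),F)>0$ or $\hom(\oo(k),G)>0$, which cuts down the dimension by a further expected amount. Finally, the equality case $f=f_a$, i.e.\ $e-f=-[\oo(a')]$ shifted, together with $f$ proportional to $[\oo(k)]$ on the wall $L_{\oo(k)e}$ itself, produces exactly $Z_{\oo(k)}$, consisting of extensions of the form above (via Example~\ref{exmp:dual-ideal-sheaf}-type descriptions of the pieces). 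This locus is irreducible by Lemma~\ref{lem:loci-codim-estimates}.

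\emph{Main obstacle.} The hard step is the strict dimension comparison for loci $Z_f$ coming from walls above $L_{\oo(k)e}$ that still contain sheaves with a map from $\oo(k)$. Here Lemma~\ref{lem:chi-bound-large-rank} alone does not suffice, and we must add the expected codimension of the $\oo(k)$-condition on $F$ or $G$. We would handle this by induction on $c$: $F$ and $G$ have smaller invariants, so the proposition (and, for positive rank pieces, the $k>k_e$ analysis) supplies the needed codimension, and the inequality then reduces to a computation like Lemma~\ref{lem:codimension-estimates-r=r'}.
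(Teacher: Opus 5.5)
Your existence argument is a legitimate, more elementary alternative to the paper's construction of $0\to\oo(k)\to E\to I_Z^\vee(k-c)[1]\to0$ in $\Coh^{\#s}$. For $E=\oo_C(kH+Z)$, a general $Z$ gives $h^0(\oo_C(Z-H))=0$: effectivity of $Z-H$ would confine $\oo_C(Z)$ to an $(m-c)$-dimensional locus of $\mathrm{Pic}^m(C)$, whereas $\oo_C(Z)$ moves in dimension $\min(m,g_C)>m-c$. Here $m\le c(c-1)/2=g_C+c-1$ is exactly the condition $k\ge k_e$. For $k>k_e$ you have $m\le g_C-1$, so $h^0(\oo_C(Z))=1$ and your dimension count is valid.

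The real problem is the step you single out as the main obstacle. It is not an obstacle, and because you defer it to an unexecuted induction with correction terms, you never actually prove uniqueness. Both equality cases of Lemma \ref{lem:chi-bound-large-rank}, namely $f=[\oo(k)]$ and $f=f_a$, lie on $L_{\oo(k)e}$ itself. If $L_{fe}$ is a wall strictly above $L_{\oo(k)e}$, then $f$ lies strictly above $L_{\oo(k)e}$. Also $\chbar_1(f)=\chbar_1(e-f)+c/r'<a+c/r'=\chbar_1(f_a)$, because $e-f$ lies left of the wall's left endpoint, which is left of $a=2d/c-k$. This, not Lemma \ref{lem:rk0-wall-bound}, is why $f\in S_{r',k}(e)$. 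For $r(e)=0$ the coefficient of $\ch_2(f)$ in $\chi(f,e-f)$ is $-2r'<0$, so you get the strict inequality $-\chi(f,e-f)>1-\chi(e(-k))$. Hence $\dim Z_{f,+}<\dim Z_{\oo(k),+}$ for the whole of $Z_f$, with no need to impose $\hom(\oo(k),E)>0$. Since $\ch_0(e)=0$, every wall is a higher-rank wall, so the equal-rank situation that needs Lemma \ref{lem:codimension-estimates-r=r'} and induction never arises. This is exactly how the paper finishes.

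On the wall $L_{\oo(k)e}$ itself, $f_a$ does not ``produce'' $Z_{\oo(k)}$. What you need is this: any $\sigma_+$-semistable $E$ in the stratum contains $\oo(k)$ as a subobject in $\Coh^{\#s}$ of the same $\sigma$-phase (the kernel $\oo(k-c')$ of $\oo(k)\to E$ lies in $\cF_s$). Then $E/\oo(k)$ is $\sigma$-semistable and $E\in Z_{\oo(k),+}$. Combined with the strict bound, this shows your general $\oo_C(kH+Z)$ lies in $Z_{\oo(k),+}$. That replaces the paper's last-wall checks and gives irreducibility via Lemma \ref{lem:loci-codim-estimates}.

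The bound $k\le c/2+d/c$ also needs repair. In your Bridgeland version the endpoints are reversed. Since $k\ge k_e=\lceil d/c+1/2\rceil>d/c$, you have $2d/c-k<k$, so $\oo(k)$ itself is the right endpoint of $L_{\oo(k)e}\cap\Delta_0$. Moreover, a map $\oo(k)\to E$ does not by itself make $L_{\oo(k)e}$ a wall. The paper argues instead that for $k>c/2+d/c$ this line lies above every wall by Lemma \ref{lem:rk0-wall-bound}, while $\oo(k)\hookrightarrow E$ in $\Coh^{\#s}$ with equal phase, a contradiction.

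In your direct version, $\oo_C\to E(-k)$ need not have finite-length cokernel: the support of $E$ may be reducible or non-reduced, $E$ may have higher rank on it, or $E$ may be strictly semistable. But you already noted that the image of $\oo(k)\to E$ is $\oo_{C'}(k)$ with $c'=\deg C'\le c$. Comparing $\ch_2/\ch_1$ for this subsheaf of (a stable factor of $\gr^{\mathrm{JH}}$ of) $E$ gives $k-c'/2\le d/c$, hence $k\le d/c+c/2$. This is shorter than the paper's argument.

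Finally, the lower bound $k\ge k_e$ is a statement about every sheaf, not only the general one. Since $h^2$ vanishes on one-dimensional sheaves, every $E$ has $h^0(E(-k_e))\ge\chi(e(-k_e))>0$.
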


\begin{proof} The case $k=k_e$ is well-known, as the general sheaf in $M(e)$ is of the form $\oo_C(D)$, the pushforward of a general line bundle on a smooth curve. The condition $\chi(e(-k_e))>0$ ensures that $\oo_C(D)$ admits a nonzero map $\oo(k_e) \to \oo_C(D)$, which must factor through $\oo_C(k_e)$, giving a short exact sequence $0 \to \oo_C(k_e) \to \oo_C(D) \to \oo_Z \to 0$ as claimed. 

Walls for $M(e)$ are parallel lines with slope $\frac{d}{c}$ containing classes $f$ with $\ch_0(f)>0$ such that $f$ and $e-f$ are both on or below $\Delta_0$. By Lemma \ref{lem:rk0-wall-bound}, no wall can lie above the line $L_{(b_{\max},\frac{1}{2}b_{\max}^2) e}$, where $b_{\max} = c/2+d/c$. Thus, if $k>b_{\max}$ and $\sigma_{s,q}$ is a stability condition on $L_{\oo(k)e}$, then $M_{\sigma}(e) = M(e)$ and $L_{\oo(k)e}$ is not a wall for $e$. In this case, assume for contradiction that there exists $E \in M(e)$ admitting a nonzero map $\oo(k) \to E$.
As $E$ is semistable, it has no zero-dimensional torsion, hence the kernel of this map in $\Coh(\bP^2)$ must be an ideal sheaf $I_Z(k-a)$, where $Z$ is a zero-dimensional subscheme and $a>0$. Such sheaves are in $\mathcal{F}_s$ for $k-a \le s < k$. Thus for $\sigma$ on $L_{\oo(k)e}$ sufficiently close to $\oo(k)$, $\oo(k) \to E$ is injective in $\Coh^{\#s}$, contradicting the fact that $L_{\oo(k)e}$ is not a wall for $e$. Thus we have proved that the $\oo(k)$-Segre stratum is empty for all $k > b_{\max}$.

Now suppose $k$ satisfies $k_e < k \le c/2+d/c$. First, we show that $L_{\oo(k)e}$ is above $L_e^{\mathrm{last}}$. The assumption $k>k_e$ implies $\chi([\oo(k)],e) \le 0$, so $e$ lies on or below the line $L_{\oo(k-3)^+ \oo(k)^+}$. Thus $\gamma$, the corresponding exceptional class to $e$, is either $[\oo(k)]$ or else is left of $[\oo(k)]$. If $\gamma$ is left of $[\oo(k)]$, then clearly $L_{\oo(k)e}$ is above $L_e^{\mathrm{last}}$. Similarly, if $\gamma = [\oo(k)]$, then $\chi([\oo(k)],e) \le 0$ implies that $L_{\oo(k)e}$ is above $L_e^{\mathrm{last}}$.

Second, we argue that $M_{\sigma}(g)$ is non-empty, where $g=e-[\oo(k)]$ and $\sigma$ is a stability condition on $L_{\oo(k)e}$.
By Example \ref{exmp:dual-ideal-sheaf}, $g$ is the class of $I_Z^{\vee}(k-c)[1]$,
where $Z$ is a zero-dimensional subscheme of length $d+c^2/2-kc$.
The condition $k>k_e$ implies $\chi(e(-k)) \le 0$, hence the slope of $L_{\oo(k)e}$, which is $d/c$, satisfies $d/c \le k-\frac{3}{2}$. Since the line through $\oo(k)$ with slope $k-\frac{3}{2}$ is $L_{\oo(k-3)\oo(k)}$, the bound on $d/c$ implies that $L_{\oo(k)e} \cap \Delta_0$ has horizontal distance $\ge 3$. By Lemma \ref{lem:last-wall-bound-rk1} applied to $(-\ch_0(g),\ch_1(g),-\ch_2(g))$ and Remark \ref{rem:right-walls}, this implies that $L_{\oo(k)e}$ is on or above the right-last wall for $g$, giving the result.

Next, we show that $L_{\oo(k)e}$ is a wall for $e$ and that $Z_{\oo(k)}$ is non-empty by explicitly constructing stable extensions.
Let $E$ be given by a general extension
\[
    0 \to \oo(k) \to E \to I_Z^{\vee}(k-c)[1] \to 0,
\]
where $Z$ is general to ensure $I_Z^{\vee}(k-c)[1]$ is $\sigma$-semistable. Then $E$ is $\sigma$-semistable.
Taking cohomology sheaves yields a long exact sequence
\[
    0 \to \cH^{-1}(E) \to \oo(k-c) \to \oo(k) \to \cH^0(E) \to \oo_Z \to 0.
\]
Since the extension is general, the map $\oo(k-c) \to \oo(k)$ is general, hence it is injective and has cokernel $\oo_C(k)$, where $C$ is a smooth curve.
Thus $\cH^{-1}(E)=0$, namely $E = \cH^0(E)$, and $E$ is an extension $0 \to \oo_C(k) \to E \to \oo_Z \to 0$. As $E$ is $\sigma$-semistable, it cannot contain zero-dimensional torsion.
Thus $C$ contains $Z$, the extension is general, and $E$ is a stable sheaf.
By $\sigma$-semistability, it follows that $E$ is $\sigma'$-stable for all $\sigma'$ above $L_{\oo(k)e}$. In particular, $E$ is $\sigma_+$-stable, so $L_{\oo(k)e}$ is a wall for $e$ with $Z_{\oo(k),+}$ non-empty. Moreover, stability of such $E$ ensures that $Z_{\oo(k)}$ is non-empty.

We conclude the proof by using dimension estimates to show that $Z_{\oo(k)}$ yields the biggest component of the $\oo(k)$-Segre stratum. Similar to the proof of Lemma \ref{lem:exc-gives-wall}, the construction above shows that $Z_{\oo(k)}$ has codimension $1-\chi(e(-k))$ in $M(e)$. The $\oo(k)$-Segre stratum is the union of $Z_{\oo(k)}$ and intersections of the stratum with $Z_f$ for each wall $L_{fe}$ above $L_{\oo(k)e}$. 
By Lemma \ref{lem:chi-bound-large-rank},
\[
    1-\chi(e(-k)) < -\chi(f,e-f).
\]
The equality is strict since $f$ lies strictly above $L_{\oo(k)e}$.
By Lemma \ref{lem:loci-codim-estimates}, the expression on the right is a lower bound on the codimension of $Z_{f,+}$ in $M_{\sigma'_+}(e)$, where $\sigma'$ is a stability condition on $L_{fe}$. Thus
\[
    \dim Z_{f,+} < \dim Z_{\oo(k),+} = \dim Z_{\oo(k)}.
\]
It follows that the intersection of $Z_f$ with the $\oo(k)$-Segre stratum has dimension less than $\dim Z_{\oo(k)}$, completing the proof.
\end{proof}

We now complete Step 1 of the proof of Theorem \ref{thm:O(k)-Segre-strata}, establishing that for a class $e$ of positive rank, the $\oo(k)$-Segre stratum contains a unique component of maximum dimension whose general member is a general extension \eqref{eq:ext-small-strata}.
The argument is similar to the proof of the proposition, but additional cases arise.  The possibility of walls $L_{fe}$ for which $\ch_0(f) < \ch_0(e)$ requires Lemma \ref{lem:chi-bound}, while walls with $\ch_0(f)=\ch_0(e)$ requires more careful dimension estimates, for which we will use Lemma \ref{lem:codimension-estimates-r=r'}, Proposition \ref{prop:O(k)-segre-strata-rk0}, and induction on $\ch_1(e)$.

\begin{proof}[Proof of Theorem \ref{thm:O(k)-Segre-strata}, Step 1, case $k>k_e$]

The same argument as in the proof of Proposition \ref{prop:O(k)-segre-strata-rk0} shows that $L_{\oo(k)e}$ is above $L_e^{\mathrm{last}}$.
Let $\sigma$ denote a stability condition on $L_{\oo(k)e}$. By Lemma \ref{lem:exc-gives-wall} and its proof, $L_{\oo(k) e}$ is a wall for $M(e)$, $Z_{\oo(k),+}$ is non-empty, and the general object in $Z_{\oo(k),+}$ is a $\sigma_+$-stable sheaf given by an extension (\ref{eq:ext-small-strata}). It remains to show that (1) the general extension is stable, hence $Z_{\oo(k)}$ has the stated codimension, and (2) the intersection of the $\oo(k)$-Segre stratum with the locus $Z_f$ for each wall $L_{fe}$ above $L_{\oo(k)e}$ is of strictly smaller dimension.

After tensoring sheaves in $M(e)$ by $\oo(-k)$, we may assume $k=0$.
We prove the result by using induction on $\ch_1(e)$ to show that for every wall $L_{fe}$ above $L_{\oo e}$, the loci in $Z_{f,\pm}$ of objects $E$ such that $h^0(E)>0$ have dimension $< \dim Z_{\oo,+}$. This completes the proof as follows. First, it implies by (\ref{eq:induced-locus-wall}) that $Z_{\oo} \subset M(e)$ is non-empty, as letting $\sigma'$ denote a stability condition on $L_{fe}$, $Z_{\oo,\sigma'_-}$ is too large to be contained in $Z_{\sigma'_-}$. Thus general extensions (\ref{eq:ext-small-strata}) are semistable and the locus $Z_{\oo}$ in $M(e)$ has codimension $1-\chi(e)$. In fact, these general extensions are stable, as a destabilizing subsheaf $F \hookrightarrow E$ with $\chbar(F)=\chbar(E)$ would contradict $\sigma_+$-stability of $E$. Second, it ensures that the intersection of $Z_{f}$ with the $\oo$-Segre stratum has dimension $< \dim Z_{\oo}$, hence the largest contribution to the $\oo$-Segre stratum comes from $Z_{\oo}$.

For the base case $\ch_1(e)=1$, $L_{\oo e}$ is the first wall of $M(e)$, so there is nothing to prove.
For $\ch_1(e)>1$, suppose $L_{fe}$ is a wall for $e$ above $L_{\oo e}$, which implies $\ch_1(f)>0$. Let $\sigma$ be a stability condition on $L_{\oo e}$ and $\sigma'$ be a stability condition on $L_{fe}$.
If $\ch_0(f) \ne \ch_0(e)$, then Lemmas \ref{lem:chi-bound} and \ref{lem:chi-bound-large-rank} imply that
\begin{equation}\label{eq:codim-comparison}
     1-\chi(e) < -\chi(f,g),
\end{equation}
where we write $g=e-f$. By Lemmas \ref{lem:loci-codim-estimates} and \ref{lem:exc-gives-wall}, $\dim Z_{f,+} < \dim Z_{\oo,+}$, which gives the result.
If instead $\ch_0(f) = \ch_0(e)$, we analyze the locus of objects $E$ admitting extensions \eqref{eq:ses-destab} and satisfying $h^0(E)>0$ by considering the following two cases.

\emph{Case 1:} $h^0(F)>0$. Lemma \ref{lem:codimension-estimates-r=r'} implies that
\begin{equation}\label{eq:codim-comparison-2}
    1-\chi(e) < (1-\chi(f)) - \chi(f,g).
\end{equation}
If $\chi(f) > 0$, then this inequality implies (\ref{eq:codim-comparison}), hence $\dim Z_{f,+}<\dim Z_{\oo,+}$. So assume $\chi(f) \le 0$. Then $f$ is not exceptional, $k_f < 0$, and $\ch_1(f)<\ch_1(e)$, so we can apply the inductive hypothesis to $f$.
By the inductive hypothesis, following reasoning as in the previous paragraph, the locus in $M_{\sigma'}(f)$ of objects $F$ satisfying $h^0(F)>0$ has codimension equal to $1-\chi(f)$. Thus, as in the proof of Lemma \ref{lem:loci-codim-estimates}, the codimension in $M_{\sigma_+'}(e)$ of the locus in $Z_{f,+}$ of objects $E$ admitting extensions (\ref{eq:ses-destab}) with $h^0(F)>0$ is bounded below by the right side of (\ref{eq:codim-comparison-2}), hence this locus has dimension $< \dim Z_{\oo,+}$.

\emph{Case 2:} $h^0(G)>0$.
We may assume $\ch_1(g)/2 +\ch_2(g)/\ch_1(g) \ge 0$, as otherwise, by Proposition \ref{prop:O(k)-segre-strata-rk0}, there are no $G$ in $M_{\sigma'}(g)=M(g)$ such that $h^0(G)>0$.
Then, by Lemma \ref{lem:codimension-estimates-r=r'},
\begin{equation}\label{eq:codim-comparison-3}
    1-\chi(e) < (1-\chi(g)) - \chi(f,g).
\end{equation}
If $\chi(g) > 0$, we again deduce \eqref{eq:codim-comparison}, so assume $\chi(g) \le 0$. As $k_{g} < 0$, Proposition \ref{prop:O(k)-segre-strata-rk0} and its proof show that the locus of objects $G$ such that $h^0(G)>0$ has codimension $1-\chi(g)$ in $M_{\sigma'}(g)$. As in the previous case, it follows that the locus in $Z_{f,+}$ of objects $E$ such that $h^0(G)>0$ has dimension $<\dim Z_{\oo,+}$.

Finally, the argument that the locus in $Z_{f,-}$ of objects $E$ such that $h^0(E)>0$ has dimension $< \dim Z_{\oo,+}$ is similar, using $-\chi(g,f) > -\chi(f,g)$ (Lemma \ref{lem:loci-codim-estimates}) and analogous cases as above. This completes the proof of Step 1 of the theorem.
\end{proof}

\section{Proof and consequences of main result}\label{sec:conseq}

In this section, we finish the proof of Theorem \ref{thm:O(k)-Segre-strata}
by completing Steps 2 and 3. We then discuss several corollaries of the theorem.

\subsection{$I_Z(k)$-Segre strata}

This subsection is devoted to studying the Segre strata where the Segre subsheaves are ideal sheaves of points. The main result of the subsection completes Step 2 of the proof of Theorem \ref{thm:O(k)-Segre-strata}.

The following technical lemma will be used to relate these Segre strata to the $\oo(k)$-Segre strata.

\begin{lem}\label{lem:rk1-sub-tf}
Suppose $E_0$ is locally free with a Segre subsheaf $\oo(k) \subset E_0$. Set $d = \hom(\oo(k),E_0)$. 
Let $E$ be the kernel of an elementary modification $\alpha \colon E_0 \twoheadrightarrow \oo_W$. 
Let $w$ denote the length of $W$.
Then:
\begin{enumerate}[(a)]
\item If $w \le d-1$, then $\oo(k)$ is a Segre subsheaf of $E$.
\item If $w > d-1$, then each Segre subsheaf of $E$ is $I_{Z_m}(k) \subset E$ for some $Z_m \subset W$ with $m \le w+1-d$.
\item If $w > d-1$ and $W$ and $\alpha$ are general,
then $I_{Z_{w+1-d}}(k) \subset E$ is a Segre subsheaf for each $Z_{w+1-d} \subset W$.
\end{enumerate}
\end{lem}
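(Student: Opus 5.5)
The plan is to compare rank-one subsheaves of $E$ with those of $E_0$, using local freeness of $E_0$. Since $E \subset E_0$ with $E_0/E \cong \oo_W$, any rank-one subsheaf $F\subset E$ is also a subsheaf of $E_0$. Its saturation in $E_0$ is a rank-one reflexive, hence locally free, subsheaf $\oo(j)$. Because $\oo(k)$ is a Segre subsheaf of $E_0$, we have $j\le k$, and $F\subseteq \oo(j)\cap E$.

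The rank-one subsheaves of $E$ with $c_1=k$ are therefore exactly the sheaves $\ker(\oo(k)\xrightarrow{s}E_0\to\oo_W)$, where $s$ ranges over nonzero sections in $\Hom(\oo(k),E_0)$. Such a kernel is $I_{Z}(k)$, where $Z$ is the scheme-theoretic preimage of $W$ under the composite $\oo(k)\to\oo_W$; this preimage has length $\le w$ and is contained in $W$. Any subsheaf of $E$ with $c_1<k$ gives a strictly larger Segre invariant. So it suffices to show that some $s$ yields a kernel with $c_1=k$, and then to minimise the length of $Z$, which is the same as minimising $\Delta$, over such $s$.

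The key count is linear algebra. The composite $\alpha\circ(-)\colon \Hom(\oo(k),E_0)\to \Hom(\oo(k),\oo_W)\cong H^0(\oo_W)$ is a linear map from a $d$-dimensional space to a $w$-dimensional one.
\begin{itemize}
\item If $w\le d-1$, the map has a nonzero kernel. A nonzero $s$ in the kernel factors through $E$, so $\oo(k)\subset E$, and it has $\Delta=0$, which is minimal. This proves (a).
\item If $w>d-1$, the subsheaf $I_Z(k)$ is determined by the length of the image of the composite section $\oo(k)\to\oo_W$, since $\length(Z)$ equals the length of the $\oo$-submodule generated by that image. To bound this, I choose a length-$(d-1)$ quotient $\oo_W\to\oo_{W'}$ with kernel of length $w-d+1$, and find $s\ne0$ whose image dies in $\oo_{W'}$. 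The image then lies in a submodule of length $w+1-d$, so some $s$ achieves $\length(Z)\le w+1-d$. A Segre subsheaf minimises the length, so it has $m\le w+1-d$ and is $I_{Z_m}(k)$ with $Z_m\subset W$. This proves (b).
\end{itemize}
The delicate point is that $W$ may be nonreduced, so the length of a submodule generated by one element needs care. I would handle this by working with the $\oo_W$-module structure, noting that the image submodule of $s$ has length equal to the length of $Z$.

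For (c), take $W$ to be $w$ general reduced points and $\alpha$ general. The evaluation map $\Hom(\oo(k),E_0)\to\bigoplus_{p\in W}E_0(-k)|_p$ followed by $\alpha$ is then a general map to $\bC^w$. I would show that for a general choice it is injective with image in general position relative to the coordinate hyperplanes. Then, for each subset $Z\subset W$ of size $w+1-d$, the image meets the coordinate subspace supported on $Z$ in exactly a line, whose vector has all $Z$-coordinates nonzero. No nonzero $s$ vanishes on more than $d-1$ coordinates, so the minimum length is exactly $w+1-d$ and is attained for each $Z$. This gives (c).

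I expect this general-position statement to be the main obstacle. Proving it requires that the sections of $E_0(-k)$ separate general points enough: the evaluation at general points should give a $d$-dimensional space in general position, and with $\alpha$ a general functional at each point the resulting $d\times w$ matrix should have all maximal minors nonzero. I would argue this by choosing the points one at a time, so that each functional is a generic linear form on $\Hom(\oo(k),E_0)$.
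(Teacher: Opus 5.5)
Your proposal is correct and follows essentially the paper's route: you reduce to the linear map $\Hom(\oo(k),E_0)\to\Hom(\oo(k),\oo_W)$, identify the maximal rank-one subsheaves of $E$ with $c_1=k$ as the kernels $I_Z(k)$ of the composites $\oo(k)\to\oo_W$ (with $\Delta=\length(Z)$), and for (c) use that for general $W$ and $\alpha$ any $d$ points of $W$ impose independent conditions on $\Hom(\oo(k),E_0)$. Your version of (b), which passes to a length-$(d-1)$ quotient of $\oo_W$, is a bit more careful than the paper's ``vanishes at $d-1$ points'' when $W$ is nonreduced, and your one-point-at-a-time sketch supplies the general-position claim that the paper simply asserts; it only needs that a nonzero section of $E_0(-k)$ is nonzero at a general point, together with irreducibility of the space of pairs $(p,\alpha_p)$.
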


\begin{proof} 
We consider (a). By applying the functor $\Hom(-, E_0)$ to the short exact sequence $0\to I_{W^\prime}(j)\to \oo(j)\to \oo_{W^\prime}\to 0$, we see that any subsheaf $I_{W'}(j) \hookrightarrow E$ induces an inclusion $\oo(j) \hookrightarrow E_0$, so $j \le k$. 
Moreover, if $w \le d-1$, then $\Hom(\oo(k), E)\not=0$ and $\oo(k)$ is the Segre subsheaf of $E$.

We next consider (b) and (c).
Each nonzero map $\sigma \colon \oo(k) \to E_0$ induces a commutative diagram whose rows are short exact sequences
\begin{equation}\label{eq:double-dual-diagram}\xymatrix{
    0 \ar[r] & I_{W'}(k) \ar[r] \ar[d] & \oo(k) \ar[r] \ar[d]^{\sigma} & \oo_{W'} \ar[r] \ar@{^(->}[d] & 0 \\
    0 \ar[r] & E \ar[r] & E_0 \ar[r]^{\alpha} & \oo_W \ar[r] & 0
}\end{equation}
in which $\oo_{W'} \subseteq \oo_W$ is the image of $\alpha \circ \sigma$.
Conversely, given any injective map $I_{W'}(k) \to E$, the composition $I_{W'}(k) \to E \to E_0$ factors through some map $\oo(k) \to E_0$. Assuming the cokernel of $I_{W'}(k) \to E$ is torsion-free, the induced map $\oo_{W'} \to \oo_W$ is injective, yielding a diagram (\ref{eq:double-dual-diagram}) as above.

Let $\rho$ denote the rank of the map
\[
    \Hom(\oo(k),E_0) \to \Hom(\oo(k),\oo_W)
\]
induced by $E_0 \twoheadrightarrow \oo_W$. If $\rho<d$, then $\Hom(\oo(k),E) \ne 0$, hence $E$ has Segre subsheaf $\oo(k)$. If $\rho = d$, then there must be a $\sigma$ such that $\alpha \circ \sigma$ vanishes at $d-1$ points of $W$, hence $m \le w+1-d$, where $m$ denotes the length of $W^\prime$. We have proven (b).

Assume that $w>d-1$ and $W$ and $\alpha$ are general. Then for every $Z_d \subseteq W$, the map
\[
    \Hom(\oo(k),E_0) \to \Hom(\oo(k),\oo_{Z_{d}})
\]
induced by $E_0 \twoheadrightarrow \oo_{Z_{d}}$
is an isomorphism. It follows that $\alpha \circ \sigma$ vanishes at no more than $d-1$ points of $W$, hence the image $\oo_{W'}$ of $\alpha \circ \sigma$ is supported at $m \ge w+1-d$ points. Thus, there is no map $I_Z(k) \to E$ with $|Z| < w+1-d$. Given any $Z_{d-1} \subseteq W$, there is a unique $\sigma$ (up to scaling) vanishing at $Z_{d-1}$. The corresponding diagram (\ref{eq:double-dual-diagram}) yields a Segre subsheaf $I_{W \setminus Z_{d-1}}(k) \to E$. We have proven (c).
\end{proof}

Slope-stability behaves well under elementary modifications and implies stability. The following lemma provides control over the locus of strictly slope-semistable sheaves.

\begin{lem}\label{lem:strictly-ss} Every component of the locus in $M(e)$ of strictly slope-semistable sheaves has codimension $> 1-\chi(e(-k))$ for all integers $k$ such that $k_e <k<\mu(e)$.
\end{lem}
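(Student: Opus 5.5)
We would parametrize the strictly slope-semistable locus by its slope-Jordan--H\"older data and bound each piece with the Euler-characteristic estimates of \S\ref{sec:p2-general}.

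\emph{Reduction.} A strictly slope-semistable $E\in M(e)$ has a saturated subsheaf $F\subset E$ with $\mu(F)=\mu(e)$ and $0<r(F)<r(e)$. We may take $F$ slope-stable of minimal rank, so that $G=E/F$ is slope-semistable of the same slope. After twisting we assume $k=0$, so $k_e<0<\mu(e)$ and $\chi(e)\le 0$. The standard extension count, as in the proof of Lemma~\ref{lem:loci-codim-estimates}, bounds the codimension of the locus of such $E$ with $[F]=f$ by $-\chi(f,g)$, where $g=e-f$. Here we use $\hom(G,F)\le$ (something small) and $\ext^2(G,F)=\hom(F,G(-3))^*=0$, so the projective bundle of extensions over $M(f)\times M(g)$ has dimension $\dim M(f)+\dim M(g)-\chi(g,f)-1$, possibly corrected by automorphisms. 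We also use \eqref{eq:dim-ext1-bundle}. When $f$ or $g$ is a multiple of an exceptional class, the Grassmannian variant handles it. Boundedness, as in Lemma~\ref{lem:bdd}, gives finitely many classes $f$, each giving an irreducible-component bound.

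\emph{Key inequality.} It suffices to show $-\chi(f,g)>1-\chi(e)$, i.e. $\chi(f,g)<\chi(e)-1$, for every class $f$ of rank $r'$ with $0<r'<r(e)$, $\mu(f)=\mu(e)$, and $f$ (and $g$) lying on or below $\Delta_0$. This is exactly the situation of Lemma~\ref{lem:chi-bound}: since $\mu(f)=\mu(e)$, the class $f$ lies on the vertical line through $e$ (the right boundary of $R_{r',0}(e)$). We must check it lies above $L_{\oo f_0}$; since $g$ is on or below $\Delta_0$, $f$ lies on or above $f_0$, so $f\in R_{r',0}(e)$. Lemma~\ref{lem:chi-bound} then gives $\chi(f,g)\le\chi(e)-1$, with equality only if $f=[\oo(0)]$. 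That is impossible, since $\mu(f)=\mu(e)>0$. So the inequality is strict, which yields the claim.

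\emph{Main obstacle.} The delicate step is justifying the dimension count when $\Hom(G,F)\neq 0$ or when $F,G$ are only semistable, so that the ext-dimension is not constant and $\ext^1(G,F)$ exceeds $-\chi(g,f)$. We would handle this by stratifying by $\hom(G,F)$ and using that $F$ is slope-stable of minimal rank with the same slope, which forces any nonzero map $G\to F$ to be surjective in codimension one. On such strata the automorphism/Hom contribution cancels the extra $\ext^1$, since $\ext^1-\hom=-\chi(g,f)$ when $\ext^2=0$. Equivalently, we work on the stack, where the count $\dim M(f)+\dim M(g)-\chi(g,f)$ holds uniformly. This gives the bound $-\chi(f,g)$ for every component.
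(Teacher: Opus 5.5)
Your argument is correct, and its decisive step is the paper's. The class $f$ lies on the right-hand boundary segment of $R_{r',k}(e)$, between $f_0$ and $\Delta_0$ by Bogomolov for $F$ and $G$. So Lemma~\ref{lem:chi-bound} applies, with strict inequality because $\mu(f)=\mu(e)>k$ rules out $f=[\oo(k)]$.

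What differs is the codimension estimate you feed into that lemma.
\begin{itemize}
\item The paper asserts $-\chi(f,g)-\chi(g,f)-1$, on the grounds that S-equivalence forgets the extension.
\item You use $-\chi(f,g)$, obtained by counting extensions on the stack. Since $\Ext^2(F,F)=\Ext^2(G,G)=\Ext^2(G,F)=0$ for slope-semistable sheaves of equal slope on $\bP^2$, the count $-\chi(f,f)-\chi(g,g)-\chi(g,f)$ holds uniformly. Your proposed stratification by $\hom(G,F)$ is therefore unnecessary.
\end{itemize}

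Your weaker bound is the one that actually holds on the Gieseker-stable part of the locus. There $[F]$ lies strictly below $[E]$, and the point of $M(e)$ remembers the extension. For example, in rank $2$ with $c_1=0$, general non-split extensions $0\to I_{Z_a}\to E\to I_{Z_b}\to 0$ with $a>b\ge 1$ are Gieseker stable. They fill a locus of codimension exactly $a+b-1=-\chi(f,g)$, which is smaller than $2(a+b)-3$ once $a+b\ge 3$. So the paper's bound really only covers polystable points.

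Because Lemma~\ref{lem:chi-bound} is strict, $-\chi(f,g)$ suffices. Keeping $G$ merely slope-semistable on the stack also absorbs the case of further Jordan--H\"older factors, which the paper only sketches.

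You should add one line for the strictly Gieseker-semistable part of $M(e)$. There points are S-equivalence classes, so the stack codimension does not transfer verbatim. Instead, the direct count of polystable sheaves $F\oplus G$ gives $-\chi(f,g)-\chi(g,f)-1$. This is $\ge -\chi(f,g)$, because equal slopes force $\chi(g,f)=\chi(f,g)\le\chi(e(-k))-1\le -1$.
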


\begin{proof}
If $E$ is semistable and strictly slope-semistable, then it is destabilized by $F \hookrightarrow E$ where $[F]$ lies on or directly below $[E]$ and $0<r(f)<r(e)$. Let $G$ denote the cokernel. Since $E$ is semistable, $F$ and $G$ must be slope-semistable. By \eqref{eq:dim-ext1-bundle},
since the data of the extension is lost under S-equivalence, the locus in $M(e)$ of sheaves whose slope-Jordan-H\"{o}lder filtration has two factors of classes $f$ and $g$ has codimension at least
\[
    -\chi(f,g)-\chi(g,f)-1.
\]
In cases where $F$ or $G$ are strictly slope-semistable, the slope-Jordan-H\"{o}lder filtration of $E$ has additional factors and a similar argument shows that the locus of such $E$ has even larger codimension. Since $g$ must be on or below $\Delta_0$, the result follows from Lemma \ref{lem:chi-bound}.

\end{proof}

\begin{prop}\label{prop:rk-1-subsh-p2}
    Let $e$ be a class of rank $\ge 2$. Let $k \in \bZ$ satisfy $k_e < k < \mu(e)$, and let $m\in \bZ_{> 0}$.
    The $I_{Z_m}(k)$-Segre stratum in $M(e)$ is non-empty if and only if $\chi (e(-k))+m\leq 1$.
    In this case, the stratum contains a component whose general member is a slope-stable
sheaf $E$ admitting a general extension
    \begin{equation}
    \label{eq:extension-ideal-subsheaf}
    0 \to I_{Z_m}(k) \to E \to G \to 0, 
\end{equation} 
where  $Z_m\in X^{[m]}$ is general and $G$ is a general stable sheaf.
This component has codimension $1-\chi(e(-k))+(r-2)m$ in $M(e)$. Any other component of the $I_{Z_m}(k)$-Segre stratum has dimension at most that of this component or consists entirely of strictly slope-semistable sheaves. 
\end{prop}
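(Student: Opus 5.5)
The plan is to pass to double duals and reduce to Step 1 of Theorem \ref{thm:O(k)-Segre-strata} together with Lemma \ref{lem:rk1-sub-tf}. Let $E$ be slope-stable in the $I_{Z_m}(k)$-Segre stratum, so in particular $E$ is torsion free. Set $E_0=E^{**}$, which is locally free and slope-semistable of class $e_0=e+w[\oo_{pt}]$, where $w=\length(E_0/E)$. Then $E$ is the kernel of an elementary modification $E_0\twoheadrightarrow \oo_W$.

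The first step is to show that $E_0$ admits $\oo(k)$ as a Segre subsheaf. Any $I_{Z}(j)\subset E$ saturates to a line subbundle $\oo(j')\subset E_0$ with $j'\ge j$. Conversely, the argument of Lemma \ref{lem:rk1-sub-tf}(a) shows that $\oo(j)\subset E_0$ forces $j\le k$. Since $\chi(e_0(-k))=\chi(e(-k))+w$, Lemma \ref{lem:rk1-sub-tf}(b),(c) with $d=\hom(\oo(k),E_0)\ge \chi(e_0(-k))$ applies. (We get $\ext^2=0$ because $k<\mu$ and semistability holds.) For a Segre subsheaf $I_{Z_m}(k)$ we need $w>d-1$ and $m\le w+1-d\le w+1-\chi(e(-k))-w=1-\chi(e(-k))$. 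This gives the necessity of $\chi(e(-k))+m\le 1$. Strictly slope-semistable sheaves that are not slope-stable are set aside, as the statement allows.

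For existence and the main component, I would run the construction in reverse. Take $w$ with $\chi(e_0(-k))=\chi(e(-k))+w$, chosen so that $w+1-\chi(e_0(-k))=m$, that is $\chi(e_0(-k))\ge 1$ and $k\ge k_{e_0}$. Then $d=\chi(e_0(-k))$ generically. Note that when $k=k_{e_0}$ the general $E_0\in M(e_0)$ has Segre subsheaf $\oo(k)$; when $k>k_{e_0}$ one uses Theorem \ref{thm:O(k)-Segre-strata} Step 1 for $e_0$, adjusting $w$ accordingly. Then take a general such $E_0$, written as an extension $0\to\oo(k)\to E_0\to G_0\to 0$ with $G_0$ general stable. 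Next take a general $W$ of length $w$ and a general $\alpha$. By Lemma \ref{lem:rk1-sub-tf}(c), $E=\ker\alpha$ has Segre subsheaf $I_{Z_m}(k)$ with $Z_m\subset W$. Slope-stability of $E$ follows from slope-stability of $E_0$, which holds generically by Lemma \ref{lem:strictly-ss} and the codimension count. Diagram \eqref{eq:double-dual-diagram} then exhibits $E$ as an extension $0\to I_{Z_m}(k)\to E\to G\to 0$, where $G$ is an elementary modification of $G_0$ at the $w-m$ remaining points. This $G$ is general and stable because elementary modifications of general stable sheaves at general points are general.

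The dimension count follows. The locus of $E_0$ has codimension $1-\chi(e_0(-k))$ in $M(e_0)$ when $k>k_{e_0}$, and is all of $M(e_0)$ when $k=k_{e_0}$. Elementary modifications add $w$ points and a choice of quotient in $\bP^{r-1}$ at each, so the resulting family has dimension $\dim M(e_0)-(1-\chi(e_0(-k)))+w(r+1)$. Subtracting the constraint that $\alpha$ is compatible, and using $\dim M(e)=\dim M(e_0)+2rw$, the codimension should come out as $1-\chi(e(-k))+(r-2)m$; I will check this by computing with $m=w+1-\chi(e_0(-k))$. Irreducibility of the component follows from irreducibility of the $\oo(k)$-stratum's main component (Step 1) and of the fibration of modifications.

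For the final assertion, any other component whose general member is slope-stable arises from a non-general pair $(E_0,W,\alpha)$. Either $E_0$ lies in a smaller component of its $\oo(k)$-stratum, which has smaller dimension by Step 1, or $W,\alpha$ are special. In the second case the rank $\rho$ in Lemma \ref{lem:rk1-sub-tf} drops or $d$ jumps, and one must show the dimension does not exceed that of the main component. This comparison I expect to be the main obstacle, since jumps in $d=\hom(\oo(k),E_0)$ must be weighed against the codimension bound from Proposition \ref{prop:degeneracy-locus-codim} for the locus where $\hom$ jumps.
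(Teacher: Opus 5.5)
Your necessity argument is the paper's: pass to $E_0=E^{**}$, observe that $\oo(k)$ is a Segre subsheaf of $E_0$, and combine Lemma \ref{lem:rk1-sub-tf}(b) with $d\ge\chi(e_0(-k))=\chi(e(-k))+w$. You need not restrict to slope-stable $E$ there. The same argument works for any semistable $E$, since $E^{**}$ is slope-semistable, and the ``if and only if'' needs it for every sheaf in the stratum. The genuine gaps are in the other two parts.

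\textbf{Existence and the dimension of the main component.} Your choice of $w$ does not work. Since $\chi(e_0(-k))=\chi(e(-k))+w$, the quantity $w+1-\chi(e_0(-k))$ equals $1-\chi(e(-k))$ for every $w$, as your own necessity computation shows. So with $d=\chi(e_0(-k))$ you only reach $m=1-\chi(e(-k))$. Even then your family has dimension $\dim M(e_0)+(r+1)w=\dim M(e)-(r-1)w$, which equals the claimed $\dim M(e)-(r-1)m$ only when $w=m$. The ``compatibility constraint on $\alpha$'' you propose to subtract does not exist.

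The missing idea is to take $w=m$ and $d=1$. Set $\ch(e_0)=\ch(e)+(0,0,m)$, so that $\chi(e_0(-k))=\chi(e(-k))+m\le 1$ and $k_{e_0}\le k<\mu(e_0)$. By Step 1, a general $E_0$ in the $\oo(k)$-Segre stratum of $M(e_0)$ is a locally free extension $0\to\oo(k)\to E_0\to G\to 0$ with $G$ general stable; by Lemma \ref{lem:strictly-ss} it is also slope-stable. Since $\chi(\oo(k),G)=\chi(e(-k))+m-1\le 0$, we get $\hom(\oo(k),E_0)=1$. A general modification $E_0\twoheadrightarrow\oo_{Z_m}$ restricts to a surjection $\oo(k)\twoheadrightarrow\oo_{Z_m}$. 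Hence its kernel $E$ is an extension of the \emph{same} $G$ by $I_{Z_m}(k)$, and you need no claim that modifications of $G_0$ are general. Lemma \ref{lem:rk1-sub-tf}(c) with $w=m$ and $d=1$ then makes $I_{Z_m}(k)$ a Segre subsheaf. The count is
\[\dim M(e_0)-1+\chi(e_0(-k))+(r+1)m=\dim M(e)-1+\chi(e(-k))-(r-2)m.\]

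\textbf{The bound on other components.} You leave this as the main obstacle, but no analysis of jumps in $d$ or $\rho$ is needed. For slope-stable $E$ in the stratum, $E$ determines $(E_0=E^{**},W,\alpha)$. Here $E_0$ is locally free in the $\oo(k)$-Segre stratum of $M(e_0)$, and $\ell=\length(W)\ge m$ simply because $Z_m\subseteq W$.

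Now bound crudely the family of \emph{all} length-$\ell$ modifications of \emph{all} such $E_0$. The $\oo(k)$-stratum of $M(e_0)$ has dimension at most $\dim M(e_0)-1+\chi(e_0(-k))$. This follows from Step 1, or trivially when $k=k_{e_0}$, since then $\chi(e_0(-k))\ge 1$. The modifications contribute $(r+1)\ell$, giving
\[\dim M(e)-1+\chi(e(-k))-(r-2)\ell.\]
This is non-increasing in $\ell$ because $r\ge 2$, hence at most its value at $\ell=m$. The jumps in $\hom(\oo(k),E_0)$ you were worried about are already absorbed into Step 1's dimension bound for the $\oo(k)$-stratum of $M(e_0)$.
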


\begin{proof}
Suppose $E$ is semistable and $I_{Z_m}(k) \subset E$ is a Segre subsheaf. Let $E_0=E^{**}$. By the diagram (\ref{eq:double-dual-diagram}), there is a short exact sequence $0 \to E \to E_0 \to \oo_W \to 0$, where $E_0$ is slope-semistable with Segre subsheaf $\oo(k) \subset E_0$ such that $Z_m \subseteq W$.
Lemma \ref{lem:rk1-sub-tf} implies that $m \le |W|+1-d$, where $d = \hom(\oo(k),E_0)$. Thus
\[
    m \le |W|+1-d \le |W|+1-\chi(E_0(-k)) = 1-\chi(E(-k)).
\]

Suppose that $\chi (e(-k))+m\leq 1$.
Let $e^\prime= e+(0,0,m)$.
Then, $\chi(e^\prime(-k))=\chi(e(-k))+m\leq 1$
and \[k_{e^\prime}\leq k<\mu(e^\prime)=\mu(e).\] 
According to Step 1 of the proof of Theorem \ref{thm:O(k)-Segre-strata},
a general extension $E^\prime$ of the form \eqref{eq:ext-small-strata} is stable and locally free and has a Segre subsheaf $\oo(k) \subset E^\prime$. By Lemma \ref{lem:strictly-ss}, $E'$ is slope-stable.
Let $E^\prime\twoheadrightarrow \oo_{Z_m}$ be a general elementary modification and $E$ be its kernel. 
Then, $E$ is slope-stable as well and fits in the following commutative diagram with exact rows and columns:
\begin{equation*}
\xymatrix{
    0 \ar[r] & I_{Z_m}(k) \ar[r] \ar@{^(->}[d]^{\iota} & E \ar[r] \ar@{^(->}[d] & G \ar[r] \ar@{=}[d] & 0 \\
    0 \ar[r] & \oo(k) \ar[r] \ar@{->>}[d] & E^\prime \ar[r] \ar@{->>}[d] & G \ar[r] & 0 \\
    & \oo_{Z_m} \ar@{=}[r] & \oo_{Z_m}.
}\end{equation*}
Since $\chi(\oo(k), G)=\chi(e(-k))+m-1\leq 0$ and $G$ is general, $H^0(G(-k))=0$ and $\hom(\oo(k),E^\prime)=1$.
By Lemma \ref{lem:rk1-sub-tf}, $I_{Z_m}(k)$ is a Segre subsheaf of $E$.
A dimension count of elementary modifications shows that the locus of such $E$ has codimension $1-\chi(e(-k))+(r-2)m$ in $M(e)$. We omit the details, as a similar calculation is explained below.

It remains to show that every component of the $I_{Z_m}(k)$-Segre stratum that contains slope-stable sheaves has dimension bounded by $\dim M(e) - 1 + \chi(e(-k))-(r-2)m$. Suppose $E$ is slope-stable and in the stratum. Then $E$ admits a short exact sequence $0 \to I_{Z_m}(k) \to E \to G \to 0$. The natural map from $I_{Z_m}(k)$ to the double dual $E_0$ of $E$ fits in the following commutative diagram with exact rows and columns:
\begin{equation}\label{eq:ext-diagram}\xymatrix{
    0 \ar[r] & I_{Z_m}(k) \ar[r] \ar@{^(->}[d]^{\iota} & E \ar[r] \ar@{^(->}[d] & G \ar[r] \ar@{^(->}[d] & 0 \\
    0 \ar[r] & \oo(k) \ar[r] \ar@{->>}[d] & E_0 \ar[r] \ar@{->>}[d] & G' \ar@{->>}[d] \ar[r] & 0 \\
    0 \ar[r] & \oo_{Z_m} \ar[r] & \oo_{W} \ar[r] & \oo_{W'} \ar[r] & 0,
}\end{equation}
where $W$ and $W'$ are zero-dimensional subschemes of $\bP^2$.
Note that $\ell=\length(W)$ satisfies $\ell \ge m$. The double dual $E_0$ is locally free, slope-stable, and has Segre subsheaf $\oo(k)$.
Let $e_0=[E_0]$. Over the intersection of the $\oo(k)$-Segre stratum in $M(e_0)$ with the locus of locally-free sheaves, consider the family $\Omega_{\ell}$ of elementary modifications $E_0 \twoheadrightarrow \oo_W$, where $W \in (\bP^2)^{[\ell]}$. There is a natural map $\Omega_{\ell} \to M(e)$ whose image contains $E$. The moduli of elementary modifications of $E_0$ has dimension $2\ell + (r-1)\ell$. Thus, by Step 1 of the proof of Theorem \ref{thm:O(k)-Segre-strata}, the dimension of any component of the $I_{Z_m}(k)$-Segre stratum is bounded by
\[
    \dim M(e_0)-1+\chi(e_0(-k))+(r+1)\ell.
\]
Since $\chi(e_0,e_0)=\chi(e,e)+2r\ell$ and $\chi(e_0(-k))=\chi(e(-k))+\ell$, this dimension bound is
\[
    \dim M(e) - 1 + \chi(e(-k))-(r-2)\ell.
\]
Since $\ell \ge m$ and the case $\ell=m$ is the locus of extensions \eqref{eq:extension-ideal-subsheaf} described in the proposition, this gives the result.
\end{proof}

\begin{cor}\label{cor:unique-segre}
    Let $E$ be as in Theorem \ref{thm:O(k)-Segre-strata} or Proposition \ref{prop:rk-1-subsh-p2}. Then the rank-1 Segre subsheaf of $E$ is unique except in the case where $k=k_e$
    and $\chi(e(-k_e))>1$.
\end{cor}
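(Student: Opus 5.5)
We treat the two sources of $E$ separately. In each case we assume $E$ has two distinct rank-1 Segre subsheaves, $F_1 \neq F_2 \subset E$, and derive a contradiction from the numerical data.

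\emph{Case of Theorem \ref{thm:O(k)-Segre-strata}.} Here the Segre subsheaves are $\oo(k)$, since $E$ is the general member and is locally free by Lemma \ref{lem:segre-sub-lf}. Two Segre subsheaves are then images of maps $\oo(k)\to E$. They are distinct exactly when $\hom(\oo(k),E)\ge 2$, because any two nonzero maps from $\oo(k)$ with the same image differ by a scalar. So we must show $\hom(\oo(k),E)=1$ for the general $E$, except when $k=k_e$ and $\chi(e(-k_e))>1$. We use the extension \eqref{eq:ext-small-strata}, $0\to\oo(k)\to E\to G\to 0$ with $G$ general stable of class $g=e-[\oo(k)]$. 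Applying $\Hom(\oo(k),-)$ gives
\[\hom(\oo(k),E)\le 1+h^0(G(-k)).\]
It therefore suffices to show $h^0(G(-k))=0$. For general $G$, \cite{GotHir98} gives $h^0(G(-k))=\max(0,\chi(g(-k)))$, and $\chi(g(-k))=\chi(e(-k))-1$.
\begin{itemize}
\item If $k>k_e$, then $\chi(e(-k))\le 0$, so $\chi(g(-k))<0$ and $h^0(G(-k))=0$.
\item If $k=k_e$, then $h^0(G(-k))=0$ exactly when $\chi(e(-k_e))\le 1$. Otherwise $\hom(\oo(k_e),E)=\chi(e(-k_e))>1$, which gives a positive-dimensional family of Segre subsheaves. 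This is the stated exception.
\end{itemize}
The $r(e)=1$ and degenerate cases, where $G$ is a sheaf on a curve or $M(g)$ is empty, will need a short separate check. When $r(e)=1$, $E$ is a twisted ideal sheaf and $\hom(\oo(k),E)=h^0(I_Z(c-k))$, which equals $\chi$ for general points.

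\emph{Case of Proposition \ref{prop:rk-1-subsh-p2}.} Here $E\subset E'$ is a general elementary modification of a general $E'$ in the $\oo(k)$-stratum with $\hom(\oo(k),E')=1$, and the Segre subsheaf is $I_{Z_m}(k)$. Any rank-1 Segre subsheaf $I_{Z}(k)\subset E$ with torsion-free quotient extends, via the diagram \eqref{eq:double-dual-diagram}, to a map $\oo(k)\to E'$. Since $\hom(\oo(k),E')=1$, this map is the unique one $\sigma$ up to scalar. The subsheaf is then forced to be $\sigma(\oo(k))\cap E$, which is determined. This is Lemma \ref{lem:rk1-sub-tf}(c) with $d=1$: the only choice of $Z_{d-1}$ is $Z_0=\emptyset$, so $W'=W$ is unique.

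The main obstacle is making sure Segre subsheaves with non-saturated inclusions, or inclusions with torsion quotient, are also covered. Such a subsheaf has a saturation of the same rank and at least the same $c_1$ with smaller or equal discriminant. By minimality the saturation must agree numerically, so the two coincide. Hence restricting to saturated subsheaves loses nothing.
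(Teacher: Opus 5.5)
Your proof is correct. For $E$ as in Theorem \ref{thm:O(k)-Segre-strata} it is the paper's argument. You read off $\hom(\oo(k),E)=1+h^0(G(-k))$ from \eqref{eq:ext-small-strata} and apply weak Brill--Noether to the general $G$. This is an equality, not just an inequality, since $\Ext^1(\oo(k),\oo(k))=0$, and you need the equality in the exceptional case.

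For $E$ as in Proposition \ref{prop:rk-1-subsh-p2} your route differs from the paper's.
\begin{itemize}
\item \emph{The paper} works with the extension $0\to I_{Z_m}(k)\to E\to G\to 0$. It shows $\Hom(I_{Z_m}(k),G)\cong\Hom(\oo(k),G)=0$, treating $r=2$ (where $G=I_{Z_n}(c_1-k)$) separately: general points do not lie on a curve of degree $c_1-2k$. It concludes $\hom(I_{Z_m}(k),E)=1$.
\item \emph{You} pass to $E'=E^{**}$ and reuse $\hom(\oo(k),E')=1$ from the proof of the proposition. Since $E'$ is locally free, any rank-one subsheaf $I_{Z'}(k)\subset E$ extends to a multiple of the unique $\sigma\colon\oo(k)\to E'$. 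So it lies in $\sigma(\oo(k))\cap E$, and minimality of $\Delta$ forces equality.
\end{itemize}
Your version disposes of all competitors $I_{Z'}(k)$ with $\length(Z')=m$ at once. It also absorbs the $r=2$ versus $r>2$ split into the already proven $\hom(\oo(k),E')=1$. The paper's count $\hom(I_{Z_m}(k),E)=1$ concerns the fixed $Z_m$, so excluding $Z'\neq Z_m$ needs a further remark. For $r=2$, $\Hom(I_{Z'}(k),G)$ can be nonzero for special $Z'$ meeting $Z_n$, so one would have to show such maps do not lift. The paper's version, in turn, uses only the extension description in the statement of the proposition, not the elementary-modification construction from its proof.

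Two small corrections.
\begin{itemize}
\item Every rank-one Segre subsheaf of $E$ in the $\oo(k)$-Segre stratum is isomorphic to $\oo(k)$ for numerical reasons: a rank-one torsion-free sheaf with $c_1=k$ and $\Delta=0$ is $\oo(k)$. The reason is not that $E$ is locally free, which fails, for instance, when $r(e)=1$.
\item In rank one with $k>k_e$, the points are not general. Your extension argument already covers that case, though: $G$ is then a general line bundle on a smooth curve with $\chi(G(-k))<0$.
\end{itemize}
Your saturation paragraph is harmless but unnecessary. Nonzero maps $\oo(k)\to E$ are injective, and reflexivity of $E'$ gives the extension to $\oo(k)$ with no saturation hypothesis.
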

\begin{proof}
    In Theorem \ref{thm:O(k)-Segre-strata}, when $k=k_e$, 
  we have  $\hom(\oo(k_e),E)=\chi(E(-k_e))$ since $\hom(\oo(k_e),G)=\chi(\oo(k_e),G)=\chi(\oo(k_e),E)-1\geq 0$. So, $E$ has a unique Segre subsheaf $\oo(k_e)\subset E$ if and only if $\chi(e(-k_e))=1$. 
On the other hand, when $k>k_e$, $\hom(\oo(k),E)=1$ because $\chi(\oo(k),G)=\chi(\oo(k),E)-1< 0$ and thus $\hom(\oo(k),G)=0$. 
In particular, when $k>k_e$, $E$ has a unique rank-one Segre subsheaf $\oo(k)\subset E$.

Next, consider the case of Proposition \ref{prop:rk-1-subsh-p2}. 
When $r=2$, let $G\cong I_{Z_n}(c_1-k)$ for some $Z_n\in X^{[n]}$. 
    The condition $\chi(e(-k)) +m\le 1$ is equivalent to $n\geq \chi(\oo(c_1-2k))$. A general $Z_n$ does not lie on a curve of degree $c_1-2k$ and $\hom(\oo(k),I_{Z_n}(c_1-k))=0$.
     Notice that $\chi(G(-k))=\chi(E^{**}(-k))-1=\chi(e(-k))+m-1\leq 0$. 
     When $r>2$ and $G$ is general, $h^0(G(-k))=0$. 
    On the other hand, $\Hom(I_{Z_m}(k),G)\cong\Hom(\oo(k),G)$.
    Therefore, $\hom(I_{Z_m}(k),E)=1$ and $I_{Z_m}(k)\subset E$ is the only Segre subsheaf of rank 1.
\end{proof}

\subsection{Irreducibility and nestedness of $\oo(k)$-Segre strata}

In this subsection, we complete Step 3 of the proof of Theorem \ref{thm:O(k)-Segre-strata}, which finishes the proof. We have shown that the $\oo(k)$-Segre strata have a unique component of maximum dimension whose general element is a general extension. Similarly, the $I_{Z_m}(k)$-Segre strata have a component whose general element is a general extension, and all other components are smaller or contained in the locus of S-equivalence classes of strictly slope-semistable sheaves. We now show that the $\oo(k)$-Segre strata are irreducible and nested.

\begin{proof}[Proof of Theorem \ref{thm:O(k)-Segre-strata}, Step 3] The case $k=k_e$ is clear from Step 1 of the proof of Theorem \ref{thm:O(k)-Segre-strata}, so we assume $k>k_e$.
Let $\Phi_k$ denote the locus in $M(e)$ of equivalence classes of sheaves $E$ such that $\gr^{\mathrm{JH}}(E)$ admits a nonzero map from $\oo(k)$. 
Let $Q^{\text{ss}} = \Quot^{\rm ss}(\oo(-m)^N,e)$ denote the Quot scheme such that 
$M(e)=Q^{\rm ss}/\mkern-3mu/\GL(N,\mathbb{C})$, as in the proof of Lemma \ref{lem:segre-strata-loc-closed}. Taking $S=Q^{\text{ss}}$ in Proposition \ref{prop:degeneracy-locus-codim}, every component of the degeneracy locus $D_{j_0}(\Phi)$ of quotients $\oo(-m)^N \twoheadrightarrow E$ such that $E$ admits a nonzero map from $\oo(k)$ has codimension $\le 1-\chi(e(-k))$ in $Q^{\rm ss}$. By a similar argument as in the proof of Lemma \ref{lem:strictly-ss}, it follows that every component of $D_{j_0}(\Phi)$ contains a stable sheaf. Thus, as $\Phi_k$ is the image of $D_{j_0}(\Phi)$ under the modular map $Q^{\rm ss} \to M(e)$, we see that every component of $\Phi_k$ has codimension $\le 1-\chi(e(-k))$ and contains a stable sheaf.

Since $r(e) \ge 1$, every nonzero map $\oo(k) \to E$ is injective, so
$\Phi_k$ is the disjoint union of all $\oo(j)$-Segre strata (for $k \le j < \mu(e)$) and the intersection of $\Phi_k$ with all $I_{Z_m}(j)$-Segre strata (for $k<j<\mu(e)$, $0<m \le 1-\chi(e(-j))$).
By Step 1 of the proof of Theorem \ref{thm:O(k)-Segre-strata} and Proposition \ref{prop:rk-1-subsh-p2},
the only possible component in $\Phi_k$ of codimension $\le 1-\chi(e(-k))$ is the one containing general extensions \eqref{eq:ext-small-strata}. By the above, this must be the only component of $\Phi_k$, so $\Phi_k$ is irreducible and equals the closure of the maximum-dimension component of the $\oo(k)$-Segre stratum. Thus the $\oo(k)$-Segre stratum is irreducible and its closure contains the $\oo(j)$-Segre stratum for all $k \le j <\mu(e)$.
\end{proof}

\subsection{Consequences of Theorem \ref{thm:O(k)-Segre-strata}}

We discuss several interesting results that follow from Theorem \ref{thm:O(k)-Segre-strata}, including statements about both general and non-general stable sheaves. First, we deduce the main corollary stating that general extensions of a stable sheaf by a rank-one sheaf are stable.

\begin{proof}[Proof of Corollary \ref{cor:lange-p2}]
In each case, it suffices to prove that there is a stable extension \eqref{eq:ext}. For (a), let $e=g+[\oo(k)]$. Then $\mu(e)>k$ and $\chi(e(-k-1))=\chi(g(-k-1)) \le 0$, so $k\ge k_e$.
Since $M(e)$ is of positive dimension, Theorem \ref{thm:O(k)-Segre-strata} applies and gives the result.

For (b), let $e=g+[I_{Z_m}(k)]$. Then $\mu(e)>k$, $r(e) \ge 2$,
and $\chi(e(-k))+m=\chi(g(-k))+\chi(I_{Z_m})+m = \chi(g(-k))+1 \le 1$. The conditions $\chi(e(-k)) \le 0$ and $\mu(e)>k$ imply that $k>k_e$ and that $\Delta(e)>1$, hence $M(e)$ is of positive dimension.
Thus Proposition \ref{prop:rk-1-subsh-p2} applies and gives the result.
\end{proof}

\begin{rem}
    Theorem \ref{thm:O(k)-Segre-strata} and the degeneracy locus description of the $\oo(k)$-Segre stratum (Proposition \ref{prop:degeneracy-locus-codim}) can be used to calculate the class of the closure of the stratum using the Thom--Porteous formula. 
\end{rem}

The cases $k>k_e$ of Theorem \ref{thm:O(k)-Segre-strata} have interesting implications for the Brill--Noether strata. The $j$th \emph{Brill--Noether stratum} is the locus in $M^{\stable}(e)$ defined by
\begin{align*}
    B_j(e)=\{\, E\in M^\stable (e)\mid h^0(E)=j\,\}. 
\end{align*}

\begin{cor}\label{cor:BN}
    Let $X=\bP^2$ and $e$ be a class of rank $\geq 1$ such that $M(e)$ has positive dimension. If $0\leq k < \mu(e)$ and $\chi(e) < \chi(\oo(k))$, then
    $B_{\chi(\oo(k))}(e)$ is non-empty and contains a component of codimension at most $1-\chi(e(-k))$ in $M^{\stable}(e)$. In the case $k=0$, $B_1(e)$ is non-empty, irreducible, and of codimension $1-\chi(e)$ in $M^{\stable}(e)$.
\end{cor}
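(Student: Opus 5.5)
The plan is to read off the Brill--Noether statement from the $\oo(j)$-Segre strata in Theorem \ref{thm:O(k)-Segre-strata}, using the fact that $h^0(E)$ is controlled by the rank-one Segre subsheaf $\oo(j)$ together with the quotient $G$.

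\emph{Which strata are relevant.} First, $\chi(e)<\chi(\oo(k))$ with $k\ge 0$ should force $k>k_e$, or at least $k\ge k_e$. Indeed, for general $E$, Lemma \ref{lem:gen-segre-p2} and the argument giving \eqref{eq:chi-non-pos} give a resolution as in Corollary \ref{cor:gen-res}, from which $h^0(E)$ can be compared with $\chi(\oo(k))$. The simplest route avoids this: if $k<k_e$ we still have $k_e\le k'$ for some $k'\ge k$, so I would handle the general case by noting that Theorem \ref{thm:O(k)-Segre-strata} applies to every $k$ with $k_e\le k<\mu(e)$. For $k<k_e$, the statement follows from the $k_e$-case and semicontinuity. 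I will assume $k\ge k_e$ and set $j=k$.

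\emph{Computing $h^0$ on the stratum.} Take $E$ general in the $\oo(k)$-Segre stratum, written as a general extension \eqref{eq:ext-small-strata} with $G$ general stable of class $g=e-[\oo(k)]$. The cohomology sequence gives
\[
h^0(E)\ge h^0(\oo(k))=\chi(\oo(k)),
\]
with equality if $h^0(G)=0$ and the coboundary behaves. By G\"ottsche--Hirschowitz, a general $G$ has $h^0(G)=\max(\chi(g),0)$. Here $\chi(g)=\chi(e)-\chi(\oo(k))<0$, so $h^0(G)=0$ and $h^0(E)=\chi(\oo(k))$ exactly; note $h^1(\oo(k))=0$. The sheaf $E$ is stable by the theorem, except in the excluded case $k=k_e$ with $M(g)$ empty. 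That case is ruled out here because $\chi(e)<\chi(\oo(k))$ together with the Dr\'ezet--Le Potier condition forces $g$ on or below $\DLP$; otherwise, I would check it separately. So the stratum meets $B_{\chi(\oo(k))}(e)$ in an open dense subset of a set of codimension $1-\chi(e(-k))$ (or $0$ if $k=k_e$). This gives a component of $B_{\chi(\oo(k))}(e)$ of codimension at most that, since $B_j$ is locally closed and the stratum's general point lies in it.

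\emph{The case $k=0$.} Here $B_1(e)$ consists of stable $E$ with $h^0(E)=1$, so $\hom(\oo,E)=1$ and $\oo\subset E$. Since $\mu(e)>0$, either $\oo$ is the Segre subsheaf or some $\oo(j)$ with $j>0$ maps in. By Step 3, the locus $\Phi_0$ where $\oo$ maps nonzero to $E$ is irreducible of codimension $1-\chi(e)$; this requires $\chi(e)\le 0$, i.e.\ $k_e<0$, which holds by hypothesis $\chi(e)<1$. Then $B_1(e)$ is the open subset of $\Phi_0\cap M^\stable(e)$ where $h^0=1$. It is nonempty by the computation above, hence irreducible of the same codimension.

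The main obstacle is the non-emptiness and exactness of $h^0$ for general $G$, and excluding the degenerate case $k=k_e$ with $M(g)$ empty. If the second bullet becomes awkward, I would fall back to Corollary \ref{cor:lange-p2}(a) to guarantee stability of the general extension.
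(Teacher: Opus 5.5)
Your overall route is the paper's: take $E$ general in the $\oo(k)$-Segre stratum as a general extension $0\to\oo(k)\to E\to G\to 0$, and use $\chi(g)=\chi(e)-\chi(\oo(k))<0$ to get $h^0(G)=0$ and hence $h^0(E)=\chi(\oo(k))$. Your $k=0$ argument is sound. It treats $B_1(e)$ as a nonempty open subset of the irreducible locus $\Phi_0\cap M^{\stable}(e)$ from Step 3, which is actually more careful than identifying $B_1(e)$ with the $\oo$-Segre stratum. The gap is the first step: for $k>0$ you never establish $k>k_e$, and the argument depends on it.

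Your fallback for $k<k_e$ (``follows from the $k_e$-case and semicontinuity'') does not work. The $\oo(k)$-Segre strata with $k<k_e$ are empty, since every $E\in M(e)$ receives a nonzero map from $\oo(k_e)$. Semicontinuity only says $h^0$ can jump up on closed subsets. It cannot produce sheaves with $h^0$ exactly $\chi(\oo(k))$, nor any codimension bound.

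Allowing $k=k_e$ is also not harmless. Then $1-\chi(e(-k))\le 0$, and when $\chi(e(-k_e))\ge 2$ the required bound ``codimension at most $1-\chi(e(-k))$'' is negative. So your ``or $0$ if $k=k_e$'' does not prove the statement. Your claim that the excluded case ($k=k_e$, $M(g)$ empty) is ruled out by the Dr\'ezet--Le Potier condition is likewise unsupported.

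The missing ingredient is the numerical fact $\chi(e(-k))\le 0$.
\begin{itemize}
\item If $\chi(e)\le 0$, then $k_e<0\le k$, as you already note for $k=0$.
\item Otherwise $k>0$ and $0<\chi(e)\le \chi(\oo(k))-1=(k^2+3k)/2$. Since $\chi(e(-k))=\chi(e)+r(k^2-3k)/2-c_1k$ is increasing in $\chi(e)$, it suffices to treat equality. After dividing by $k/2$, the inequality $\chi(e(-k))\le 0$ becomes $2c_1+3r\ge (r+1)k+3$.
\item This follows from $c_1\ge rk+1$ (because $k<\mu(e)$) and $r\ge 1$: the difference is at least $(r-1)k+3r-1\ge 0$.
\end{itemize}
Since $k<\mu(e)<k_+$, the inequality $\chi(e(-k))\le 0$ forces $k\ge k_->k_e$. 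Theorem \ref{thm:O(k)-Segre-strata} then applies in the case $k>k_e$, where there is no exceptional case. The general extension with $G$ general stable is stable, and the stratum has codimension exactly $1-\chi(e(-k))$, which completes your argument.
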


\begin{proof}

First, we show that $k > k_e$. If $\chi(e)\le 0$, then $k_e < 0$, so the claim is trivial. This includes the case $k=0$, as then $\chi(e)<1$. So we may assume $0<k<\mu(e)$ and $0 < \chi(e) < \chi(\oo(k))$.
It suffices to prove that $\chi(e(-k))\le 0$, namely
    $\chi(e)+r(e)(k^2-3k)/2-c_1(e)k \le 0$,
in the most difficult case $\chi(e)=\chi(\oo(k))-1$. Substituting this for $\chi(e)$ and dividing by $k$, the above inequality is equivalent to $2c_1(e)+3r(e) \ge (r(e)+1)k+3$, which follows easily from
$r(e) \ge 1$ and $c_1(e) > r(e)k$.

Thus, we may apply Theorem \ref{thm:O(k)-Segre-strata}
to deduce that the largest component of the $\oo(k)$-Segre stratum contains stable sheaves $E$ admitting short exact sequences \eqref{eq:ext-small-strata} for general stable $G$. Since $\chi(G)=\chi(e)-\chi(\oo(k))<0$,
such $E$ satisfy $h^0(E)=h^0(\oo(k))=\chi(\oo(k))$. Therefore, $B_{\chi(\oo(k))}(e)$ contains all such $E$ and thus has a component whose codimension is at most $1-\chi(e(-k))$. The last statement follows from the fact that $B_1(e)$ is by definition the intersection of $M^{\stable}(e)$ and the $\oo$-Segre stratum.
\end{proof}

\begin{rem}\label{rem:BN-expdim} The expected codimension of $B_j(e)$ in $M^{\stable}(e)$ is $j(j-\chi(e))$, see e.g. \cite[Thm. 2.3]{COSTA20101612}. Thus the corollary shows that $B_1(e)$ has the expected dimension. When $k>0$ and $\chi(e)<\chi(\oo(k))+1-r(e)-2(c_1(e)-r(e)k)/(k+3)$, the corollary shows that $B_{\chi(\oo(k))}(e)$ contains a locus of dimension larger than the expected dimension.
\end{rem}

We also obtain an analogue of Corollary \ref{cor:gen-res} for sheaves in non-general rank-one Segre strata. In these cases, the resolutions involve four line bundles rather than three.

\begin{cor}\label{cor:small-strata-res} Suppose $E \in M(e)$ is general in the
$\oo(k)$-Segre stratum, where $k_e<k<\mu(e)$. Let $g=e-[\oo(k)]$. Then $E$ admits a resolution
\[
  0 \to \oo(k_g-2)^{a_2} \to \oo(k_g-1)^{a_1} \oplus \oo(k_g)^{a_0} \oplus \oo(k) \to E \to 0  
\]
or
\[
    0 \to \oo(k_g-2)^{a_2} \oplus \oo(k_g-1)^{-a_1} \to \oo(k_g)^{a_0} \oplus \oo(k) \to E \to 0  
\]
depending on whether $a_1 \ge 0$ or $a_1 \le 0$. Here $a_0=\chi(g(-k_g))$, $a_1=\chi(g(-k_g+1))-3a_0$, and $a_2=a_0+a_1+1-r(e)$.
\end{cor}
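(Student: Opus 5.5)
The plan is to mimic the derivation of Corollary \ref{cor:gen-res}, applying it to the cokernel $G$ rather than to $E$. By Theorem \ref{thm:O(k)-Segre-strata}, a general $E$ in the $\oo(k)$-Segre stratum (with $k>k_e$) is a general extension \eqref{eq:ext-small-strata}, where $G\in M(g)$ is a general stable sheaf. The exceptional case $M(e-[\oo(k)])=\emptyset$ in the theorem only concerns $k=k_e$, so here $M(g)$ is non-empty and $G$ is general. Corollary \ref{cor:gen-res} applied to $G$ then gives a resolution
\[
0\to \oo(k_g-2)^{a_2'}\to \oo(k_g-1)^{a_1}\oplus \oo(k_g)^{a_0}\to G\to 0
\]
(or the variant with $\oo(k_g-1)^{-a_1}$ moved to the left when $a_1\le 0$), where $a_0=\chi(g(-k_g))$, $a_1=\chi(g(-k_g+1))-3a_0$, and $a_2'=a_0+a_1-r(g)$. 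Since $r(g)=r(e)-1$, we have $a_2'=a_0+a_1+1-r(e)=a_2$, which matches the statement. Some care is needed when $M(g)$ is zero-dimensional (for instance, $g$ exceptional) or when $r(g)=0$. In the first case $G$ is an exceptional bundle, and it too has such a resolution. If $r(g)=0$, I would instead use the explicit description of $G$ from Proposition \ref{prop:O(k)-segre-strata-rk0}. I would check these edge cases separately.

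The key step is to lift the surjection $P_0\twoheadrightarrow G$, where $P_0=\oo(k_g-1)^{a_1}\oplus\oo(k_g)^{a_0}$ (or $\oo(k_g)^{a_0}$), to a map $P_0\to E$. The obstruction is the pulled-back extension class in $\Ext^1(P_0,\oo(k))$, which is a sum of groups $H^1(\oo(k-j))=0$ on $\bP^2$. So lifts exist. The combined map $\oo(k)\oplus P_0\to E$ is surjective because its composite to $G$ is surjective and it contains $\oo(k)$. By the snake lemma, its kernel is isomorphic to the kernel of $P_0\to G$, namely $\oo(k_g-2)^{a_2}$ (together with $\oo(k_g-1)^{-a_1}$ in the second case). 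Assembling the resulting sequence gives exactly the displayed resolutions.

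The main obstacle is making sure that the general $E$ in the stratum really has a \emph{general} $G$ to which Corollary \ref{cor:gen-res} applies, rather than just some stable $G$. This is provided by Step 1 of the proof of Theorem \ref{thm:O(k)-Segre-strata}, where the dominant component is shown to be the image of a projective bundle of extensions over an open subset of $M(g)$. I would also confirm that the lifting argument needs only the vanishing of $H^1$ of line bundles on $\bP^2$, so that no Segre-subsheaf property of $G$ is required.
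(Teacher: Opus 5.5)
Your proposal follows the paper's proof: apply Corollary \ref{cor:gen-res} to the general stable cokernel $G$ supplied by Theorem \ref{thm:O(k)-Segre-strata}, then combine the result with $\oo(k)\subset E$. Your lift of $P_0\twoheadrightarrow G$ to $E$, using $H^1(\oo(j))=0$ and the snake lemma, makes explicit the combination step the paper leaves implicit. Two remarks on your edge cases. The worry that $M(g)$ might be zero-dimensional does not arise for $k>k_e$, since the proof of Lemma \ref{lem:exc-gives-wall} places $g$ on or below $\DLP$. The case $r(e)=1$, where $G$ has rank $0$ and Corollary \ref{cor:gen-res} does not literally apply, is left unaddressed by the paper's one-line proof as well.
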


\begin{proof}
$E$ admits a short exact sequence $0 \to \oo(k) \to E \to G \to 0$, with $G$ a general stable sheaf. Applying Corollary \ref{cor:gen-res} to $G$ gives the result.
\end{proof}

\bibliographystyle{alpha}
\bibliography{bib}
\end{document}